\def\RSthmtxt{theorem~}\newref{thm}{name = \RSthmtxt}}
\def\RSlemtxt{lemma~}\newref{lem}{name = \RSlemtxt}}
\newtheorem{theorem}{Theorem}[section]
\newtheorem{thm}[theorem]{Theorem}
\newtheorem{prop}[theorem]{Proposition}
\newtheorem{lemma}[theorem]{Lemma}
\newtheorem{remark}{Remark}
\newtheorem{notation}{Notation}
\numberwithin{equation}{section}
\numberwithin{figure}{section}
\newcommand{\RR}{\mathbb{R}}
\newcommand{\NN}{\mathbb{N}}
\newcommand{\Om}{\Omega}
\newcommand{\p}{\partial}
\newcommand{\pd}[2]{\frac {\p #1}{\p #2}}
\newcommand{\ds}{\displaystyle}
\newcommand{\beq}{\begin{equation}}
\newcommand{\eeq}{\end{equation}}  
\newcommand{\eqnref}[1]{(\ref {#1})}
\begin{document}
 
\global\long\def\Omd{\Omega_{\delta}}
\title{
Cloaking laminate design based on GPT-vanishing structures
}
\author{
Eleanor Gemida$^{1\text{,2}}$ and Mikyoung LIM$^{3}$}
\keywords{
Approximate cloaking, homogenization, laminate composites, generalized
polarization tensor}

\begin{abstract}

We propose a near-cloaking design which is a lamination of a finite number of layers of isotropic materials. The proposed design is an approximation of a cloaking material obtained by pushing forward a multi-coated structure for which the coating cancels the generalized polarization tensors (GPTs) up to several leading orders. The enhanced cloaking effect achieved by the GPT-vanishing structure permits a coarser microscale requirement and reduces the contrast in the constituent isotropic materials, thereby improving constructibility of cloaking laminates compared with designs based on a non-coated structure.
\end{abstract}

\address{ 
$^{1}$Université Paris Cité, CNRS, Sorbonne Université, Laboratoire
Jacques-Louis Lions UMR 7598, F-75006 Paris, France; $^{2}$Institute
of Mathematical Sciences, College of Arts and Sciences, University
of the Philippines Los Ba\~{n}os, 4031 Laguna, Philippines; $^{3}$Department of Mathematical Sciences, Korean Advanced Institute of Science and Technology, Daejeon 34141, Republic of Korea}
\thanks{ 
This project has received funding from the European Union's Horizon
2020 research and innovation programme under the Marie Skłodowska-Curie
grant agreement No 945332.}
\thanks{ 
This project has received funding from the College of Doctoral Schools
and the SMARTS-UP project of the Université Paris Cité.}
\thanks{ 
This work was supported by a National Research Foundation of Korea (NRF) grant funded by the Korean government (MSIT) No. RS-2024-00359109}
\email{ 
eleanor.gemida@etu.u-paris.fr}
\email{ 
mklim@kaist.ac.kr}
\maketitle

\section{Introduction}

Passive cloaking involves surrounding an object with a
specially designed material, commonly referred to as a metamaterial,
that renders both the object and the cloak itself undetectable
to external sensing, typically via electromagnetic waves. Significant
progress in this field  has been achieved through the change of variables scheme, also known as transformation optics
\cite{Greenleaf:2003:NCI,Kohn:2008:CCV,Milton:2006:CEA,Pendry:2006:CEF,Chen:2010:ACT,Kohn:2010:CCV,Bao:2014:NCE,Hu:2015:NCE}.
The core component of this method is a coordinate transformation that preserves a fixed outer boundary, such as $\partial B_1$, where $B_s$ denotes a ball of radius $s$ centered at the origin.

More specifically, approximate cloaking (or near cloaking) employs a transformation that expands a small region, $B_{\rho}$, into a region of a finite size,  such as  $B_{{1}/{2}}$.
This expanded region $B_{{1}/{2}}$, functions as the cloaked zone. In the context of the conductivity
equation, the approximate cloak--denoted by $\sigma_{c}$--is generated by pushing forward the background conductivity through this transformation. This approach achieves a cloaking effect up to order $\rho^{d}$ in dimension $d$
\cite{Kohn:2008:CCV}. For the electric impedance tomography (EIT), this implies that the Dirichlet-to-Neumann data measured on the outer boundary is nearly identical to that of a uniform ball, subject only to perturbations on the order of those produced by a negligible inclusion of size $\rho$. However, while the technique
is theoretically straightforward, the required metamaterials are anisotropic, posing significant challenges for manufacturing. 

 To address this, a recent study \cite{Capdeboscq:2025:CCC} proposes a near-cloaking design achieving invisibility order $\rho^d$ using finitely many isotropic layers composed of three or more distinct isotropic homogeneous materials.  Structured as a radial laminate, this design is suitable
for fabrication via additive manufacturing. The maximal and minimal isotropic conductivities in this design are of the same order as the maximal and minimal
directional eigenvalues of the anisotropic near-cloak $\sigma_{c}$. 
The foundational tool in this derivation is homogenization theory (see, e.g., \cite{Cioranescu:1999:IH}, \cite{Allaire:2002:SOH}), wherein $\sigma_c$ is interpreted as an effective medium to be realized by appropriately configured isotropic constituent materials. 
 More precisely, the realization of $\sigma_c$ involves constructing a sequence of heterogeneous, locally isotropic media with finite-scale microstructures that converges to the target anisotropic conductivity $\sigma_c$ in the sense of the homogenization. By rigorously quantifying the homogenization error, the study establishes an estimate for the necessary microscale. Homogenization-based approach to cloaking has been discussed in earlier works \cite{Greenleaf:2008:ITO,Farhat:2008:AIF},  however,  rigorous analysis of the homogenization error was not addressed. 
A critical consideration in this design is the utilization of coarser microstructures, which significantly facilitates more practical implementation.
 
In this paper, we propose a cloaking laminate design that utilizes a coarser microscale while maintaining the target invisibility order $\rho^d$. This is accomplished by integrating the GPT-vanishing multi-coating scheme \cite{Ammari:2013:ENC_1} with the homogenization-based laminate framework \cite{Capdeboscq:2025:CCC}. 

The multi-coating technique, originally developed for the conductivity equation \cite{Ammari:2013:ENC_1} and subsequently extended to various physical settings \cite{Ammari:2013:ENCFM, Abbas:2017:TDE}, enhances invisibility by combining the blow-up transformation with a specialized coating structure.
In this framework, the coating occupies the annular region $B_{2\rho}\setminus B_{\rho}$ prior to the transformation. It consists of a finite number of layers with constant conductivities, specifically selected to cancel the leading order generalized polarization tensors (GPTs).

By neutralizing these leading order terms before applying the blow-up transformation, the resulting structure yields a near-cloak with an enhanced invisibility order of $\rho^{d+2N}$ for some integer $N$ referred to as the order of the GPT-vanishing structure. 
This improvement allows for a relation of the smallness constraint on the pre-transformation structure; specifically, if the pre-transformation inclusion size is increased to $\rho^{\frac{d}{d+2N}}$ (rather than $\rho$), the resulting invisibility order remains $\rho^{d}$. 
Notably, our proposed cloaking laminate based on GPT-vanishing structures achieves a cloaking effect comparable to the design in \cite{Capdeboscq:2025:CCC} while permitting a significantly larger inclusion, thereby enhancing its practical feasibility.   

To state the main results more precisely, let $\Omega=B_{1}\subset\mathbb{R}^{d}$ for $d=2,3$. 
The proposed cloaking laminate is designed to hide the core region $B_{{1}/{2}}$ from boundary measurements on $\p B_1$. The laminate is a radial structure occupying the annulus $B_{{3}/{4}}\setminus B_{{1}/{2}}$, where the background medium $B_1\setminus B_{{3}/{4}}$ has a constant conductivity of $1$. 

The laminate is constructed using the GPT-vanishing structure. 
Let $\sigma$ denote a conductivity profile exhibiting the GPT-vanishing property of order $N$ and containing an insulating core, as detailed in Propositions \ref{prop:radial:2D} and \ref{prop:radial:3D}.
Define the contrast parameter
\begin{equation}\label{kappa}
\kappa:=\max\big\{ \max\sigma,\left(\min\sigma\right)^{-1}\big\},
\end{equation}
where the maximum and minimum of $\sigma$ are taken over the coating layers, assuming a positive and piecewise constant conductivity profile therein. 
Given a small parameter $\rho\in\left(0,1/2\right)$, we construct the laminates using the conductivities $1$ (matching that of the background medium), $\alpha$ (a low-conductivity material), and a set of high-conductivity materials $\gamma^{(1)},\dots, \gamma^{(m)}$, where the number of high-conductivity materials, $m\geq 1$, depends on $\sigma$ and $\rho$. 
We take $\alpha$ to be a positive constant satisfying
\begin{equation}\label{eq:boundalpha}
\alpha=s\left(\kappa\lvert\ln\rho\rvert\right)^{-1}\left(\rho^{\frac{d}{d+2N}}\right)^{d-2}\quad\mbox{for some }0<s<1
\end{equation}
and select positive constant conductivities $\gamma^{(i)}, i=1,\dots m$, which satisfy
\begin{equation}\label{eq:boundgamma}
\gamma^{(i)}=t^{(i)}\,\kappa\,\lvert\ln\rho\rvert\quad\text{for some }t^{(i)}>1.
\end{equation}

The cloaking laminate is a heterogeneous structure defined on the radial interval $[1/2, 3/4]$ with a conductivity profile $A_{\varepsilon}$. Although the heterogeneity is confined to this interval and the conductivity remains constant in the background region $B_1\setminus B_{3/4}$, we present $A_\varepsilon$ in the following unified form for notational simplicity:
for a lamination scale $\varepsilon>0$, 
\beq\label{A_ep:intro}
A_{\varepsilon}\left(x\right)=\sum_{k=0}^{N_{\varepsilon}}a_{k}\left(\frac{\left|x\right|}{\varepsilon}\right)1_{I_{k}}(x),
\eeq
where $\left\{ I_{k}\right\}_{k=0}^{N_\varepsilon}$ partitions the radial interval $\left[{1}/{2},1\right]$ into uniform sub-intervals of length $\varepsilon$. 
For each $k$, the function $s\mapsto a_{k}^{}(s)$ represents a $1$-periodic laminate composed of isotropic materials $\alpha$, $1$, and $\gamma$ (where $\gamma\in\left\{ \gamma^{(i)}\right\}_{i=1}^{m}$). The volume fractions of these materials vary for each $k$ (refer to \eqnref{eq:Aeps_def} and \eqnref{eq:proportions_3mat} for more details). For $k$ such that $I_k\subset[3/4,1]$, $a_k$ is constantly $1$.

Regarding the boundary measurements, we consider the Dirichlet-to-Neumann map on $B_1$ for two cases: one with an insulating core and the other with a core of general conductivity. 
The main results of this paper address near-cloaking for these two cases, with the formulation for the insulating core presented below in Theorem \ref{thm:main}. 
Let $ \Lambda_{B_{1},B_{{1}/{2}}}\left[A_{\varepsilon}\right]:H^{\frac{1}{2}}\left(\partial B_{1}\right) \to H^{-\frac{1}{2}}\left(\partial B_{1}\right)$ be the operator defined by
\begin{align*}
\varphi & \mapsto A_{\varepsilon}\frac{\partial u^{\varepsilon}}{\partial\nu}\Big|_{\p B_1},
\end{align*}
where $u^\varepsilon$ is the solution to
$$\begin{cases}
\ds\nabla\cdot A_{\varepsilon}\nabla u^{\varepsilon}=0 & \text{in }B_{1}\setminus\overline{B_{{1}/{2}}},\\
\ds A_{\varepsilon}\frac{\partial u^{\varepsilon}}{\partial\nu}=0 & \text{on }\partial B_{{1}/{2}}\\
\ds u^{\varepsilon}=\varphi & \text{on }\partial B_{1}.
\end{cases}$$

\begin{thm}
\label{thm:main} 
Let $\sigma$ be a conductivity profile exhibiting the GPT-vanishing property of order $N$ and containing an insulating core in $\RR^d$ ($d=2,3$), as detailed in Propositions \ref{prop:radial:2D} and \ref{prop:radial:3D}. 
Let $A_\varepsilon$ be given by \eqnref{A_ep:intro} for a lamination scale $\varepsilon$ satisfying 
\begin{equation}\label{thm:cond:ep}
\varepsilon=\begin{cases}
O\left(\kappa^{-3}\rho^2\left|\ln\rho\right|^{-3}\right) & \text{for }d=2,\\[2mm]
O\left(\kappa^{-3}\rho^{3+\frac{3}{2N+3}}\left|\ln\rho\right|^{-1}\right) & \text{for }d=3.
\end{cases}
\end{equation}
Then, the following estimate holds:
\[
\left\Vert \Lambda_{B_{1},B_{{1}/{2}}}\left[A_{\varepsilon}\right]-\Lambda_{B_{1}}\left[1\right]\right\Vert _{\mathcal{L}\left(H^{\frac{1}{2}}\left(\partial B_{1}\right),H^{-\frac{1}{2}}\left(\partial B_{1}\right)\right)}\le C\rho^{d},
\]
where $C$ is a positive constant independent of $\rho$ but dependent on $\sigma$. Here, $\Lambda_{B_{1}}[1]$ is the Dirichlet-to-Neumann
map for the homogeneous background conductivity (Laplacian) on $B_{1}$, and $\|\cdot\|_{\mathcal{L}(H_1,H_2)}$ denotes the standard linear operator norm.
\end{thm}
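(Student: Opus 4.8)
The plan is to interpose the anisotropic near-cloak obtained by blowing up the GPT-vanishing profile. I would fix a radial bi-Lipschitz change of variables $F\colon B_1\to B_1$, equal to the identity outside $B_{3/4}$, that expands the support of $\sigma$ so that its insulating core, of radius of order $\rho_{\ast}:=\rho^{d/(d+2N)}$, is mapped onto $B_{1/2}$; setting $\sigma_c:=F_{\ast}\sigma$, one has $\sigma_c\equiv1$ on $B_1\setminus B_{3/4}$, while on $B_{3/4}\setminus B_{1/2}$ the tensor $\sigma_c$ is radial and anisotropic, and by construction the design constants $\alpha$ and $\gamma^{(i)}$ of \eqref{eq:boundalpha}--\eqref{eq:boundgamma} — of order $(\kappa\lvert\ln\rho\rvert)^{-1}\rho_{\ast}^{d-2}$ and $\kappa\lvert\ln\rho\rvert$, respectively — bracket its directional eigenvalues. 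The triangle inequality
\[
\bigl\|\Lambda_{B_1,B_{1/2}}[A_\varepsilon]-\Lambda_{B_1}[1]\bigr\|\ \le\ \bigl\|\Lambda_{B_1,B_{1/2}}[A_\varepsilon]-\Lambda_{B_1,B_{1/2}}[\sigma_c]\bigr\|+\bigl\|\Lambda_{B_1,B_{1/2}}[\sigma_c]-\Lambda_{B_1}[1]\bigr\|
\]
then reduces the proof to bounding the \emph{cloaking error} of $\sigma_c$ and the \emph{homogenization error}, each by $C\rho^{d}$.

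For the cloaking error I would use the change-of-variables identity for the conductivity equation: since $F$ fixes $\partial B_1$ and carries the insulating core with its homogeneous Neumann condition onto $\partial B_{r_0}$, where $B_{r_0}:=F^{-1}(B_{1/2})$ has radius $r_0\asymp\rho_{\ast}$, one has $\Lambda_{B_1,B_{1/2}}[\sigma_c]=\Lambda_{B_1,B_{r_0}}[\sigma]$. The GPT-vanishing estimate of \cite{Ammari:2013:ENC_1}, in its insulating-core form (which is exactly what Propositions~\ref{prop:radial:2D}--\ref{prop:radial:3D} furnish), then gives $\|\Lambda_{B_1,B_{r_0}}[\sigma]-\Lambda_{B_1}[1]\|\le Cr_0^{d+2N}\le C\rho^{d}$. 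This is the enhanced-invisibility mechanism described in the introduction: the $\rho_{\ast}^{d+2N}$ gain afforded by the order-$N$ coating absorbs exactly the enlargement of the pre-transformation inclusion from $\rho$ to $\rho_{\ast}=\rho^{d/(d+2N)}$.

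The homogenization error is where the real work lies. Radial symmetry diagonalizes both operators: expanding the boundary datum in spherical harmonics of degree $n$ turns $\nabla\cdot A_\varepsilon\nabla u^\varepsilon=0$ together with the insulating condition on $\partial B_{1/2}$ into a family of one-dimensional Sturm--Liouville problems on $[1/2,1]$ — with the oscillatory coefficient $a_k(\cdot/\varepsilon)$ appearing both in the principal part and in the zeroth-order term — so that both Dirichlet-to-Neumann maps become diagonal with multipliers $\lambda_n[\cdot]$ and $\|\Lambda_{B_1,B_{1/2}}[A_\varepsilon]-\Lambda_{B_1,B_{1/2}}[\sigma_c]\|=\sup_n(1+n)^{-1}\lvert\lambda_n[A_\varepsilon]-\lambda_n[\sigma_c]\rvert$. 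On each shell $I_k$ the $1$-periodic three-phase laminate $a_k$ is constructed so that its homogenized tensor — harmonic mean of $a_k$ in the radial direction, arithmetic mean in the tangential ones — matches $\sigma_c$ frozen at the midpoint of $I_k$; as this amounts to hitting two effective eigenvalues using the two degrees of freedom in the volume-fraction simplex of $\{\alpha,1,\gamma^{(i)}\}$, the bounds \eqref{eq:boundalpha}--\eqref{eq:boundgamma} making $\alpha$ small enough and the $\gamma^{(i)}$ large enough are precisely what keeps the fractions admissible along the whole profile, including the near-degenerate zone adjacent to $\partial B_{1/2}$. A quantitative one-dimensional homogenization estimate — the correctors being explicit functions of the single variable $\lvert x\rvert/\varepsilon$, so that the full $O(\varepsilon)$ rate is available rather than $O(\sqrt{\varepsilon})$ — would then give, mode by mode and uniformly in $n$ after the $(1+n)^{-1}$ weighting, $\|\Lambda_{B_1,B_{1/2}}[A_\varepsilon]-\Lambda_{B_1,B_{1/2}}[\sigma_c]\|\le C\,\Theta\,\varepsilon$, where $\Theta$ collects the adverse dependence on the ellipticity of $A_\varepsilon$ and $\sigma_c$ — through the contrast $\max A_\varepsilon/\min A_\varepsilon\asymp\kappa^{2}\lvert\ln\rho\rvert^{2}\rho_{\ast}^{-(d-2)}$ — on $\alpha^{-1}$, and on $\|\partial_r\sigma_c\|_{\infty}$. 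The point is that careful bookkeeping must produce $\Theta\asymp\kappa^{3}\lvert\ln\rho\rvert^{3}$ for $d=2$ and $\Theta\asymp\kappa^{3}\rho^{-3/(2N+3)}\lvert\ln\rho\rvert$ for $d=3$, so that taking $\varepsilon$ as in \eqref{thm:cond:ep} gives $C\Theta\varepsilon\le C\rho^{d}$; combined with the cloaking error, this completes the proof.

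The hard part will be precisely this quantitative homogenization estimate with degenerating coefficients. Standard corrector bounds absorb the ellipticity constants, whereas here one directional eigenvalue of $\sigma_c$ degenerates to $0$ and another grows without bound as $\rho\to0$ — which is exactly why the contrast parameter $\kappa$ of \eqref{kappa} and the scalings \eqref{eq:boundalpha}--\eqref{eq:boundgamma} enter — so the whole difficulty is to pin down the dimension-dependent polynomial growth of $\Theta$ in $\kappa$, $\lvert\ln\rho\rvert$ and, for $d=3$, $\rho^{-1}$, and to check that it is compensated by the $\rho^{d}$ budget coming from the GPT-vanishing gain, which is what singles out the thresholds \eqref{thm:cond:ep}. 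The remaining ingredients are comparatively routine: uniformity of the Sturm--Liouville corrector estimate across all modes $n$, the treatment of the two interfaces $r=1/2$ (the insulating inner boundary) and $r=3/4$, and the verification that the three-material volume fractions stay in $[0,1]$ along the radial profiles given by Propositions~\ref{prop:radial:2D}--\ref{prop:radial:3D}.
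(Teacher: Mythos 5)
Your overall architecture agrees with the paper's: interpose the anisotropic cloak $F_*(\sigma\circ\Psi_{1/\rho_{EC}})$ with $\rho_{EC}=\rho^{d/(d+2N)}$, bound the cloaking error by $C\rho_{EC}^{d+2N}=C\rho^{d}$ (this is exactly Theorem~\ref{thm:cloaking_gpt}, proved in the paper via the small-volume expansion of Section~\ref{sec:BVP}), and then control the laminate-versus-cloak discrepancy by a quantitative homogenization estimate whose size dictates the admissible $\varepsilon$ in \eqnref{thm:cond:ep}. Your bookkeeping of the target bound is also consistent with the paper: with $\alpha$, $\gamma^{(i)}$ as in \eqnref{eq:boundalpha}--\eqnref{eq:boundgamma} one indeed needs the homogenization error to be $O(\varepsilon\kappa^{3}|\ln\rho|^{3})$ for $d=2$ and $O(\varepsilon\kappa^{3}\rho^{-3/(2N+3)}|\ln\rho|)$ for $d=3$, which is what Proposition~\ref{prop:homres} delivers.

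The genuine gap is that this homogenization estimate — the entire technical core of the theorem — is asserted rather than proved. You reduce it to a family of one-dimensional Sturm--Liouville problems and invoke ``a quantitative one-dimensional homogenization estimate'' giving an $O(\varepsilon)$ rate with a constant $\Theta$ whose growth in $\kappa$, $|\ln\rho|$ and (for $d=3$) $\rho^{-1}$ you then postulate to be exactly the right one; you yourself label this ``the hard part'' and leave the bookkeeping undone. Nothing in the proposal establishes (i) that the mode-by-mode corrector estimate is uniform in the harmonic degree $n$ after the $(1+n)^{-1}$ weighting, which is delicate because the zeroth-order term scales like $n^{2}$ and the coefficients are piecewise constant with contrast $\asymp\kappa^{2}|\ln\rho|^{2}\rho_{EC}^{-(d-2)}$ degenerating as $\rho\to0$, nor (ii) how the freezing error (the laminate matches $\sigma_1^*,\sigma_2^*$ only at the sample points $s_k$ of \eqnref{eq:system-parameters}, and $\sigma_c$ has $O(L)$ jump interfaces plus the jump at $|y|=3/4$) enters the constant, nor (iii) in $d=3$ why the extra factor $\rho^{-3/(2N+3)}$ — absent in $d=2$ — appears, which in the paper comes from $\|1/\sigma_1^*\|_{L^\infty}\asymp|\ln\rho|/\rho_{EC}$ in Lemma~\ref{lem:weak_conv} and Lemma~\ref{lem:rad-reg}. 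The paper does not argue mode by mode at all: it writes the quadratic-form difference as $\int(F_*(\sigma\circ\Psi_{1/\rho})-A_\varepsilon)\nabla u_\varepsilon\cdot\nabla u$, splits it into a radial-flux term $E_r$ and a tangential term $E_t$, proves a quantitative weak-convergence lemma for $A_\varepsilon-\sigma_2^*$ and $1/A_\varepsilon-1/\sigma_1^*$ against $W^{1,\infty}$ test functions (Lemma~\ref{lem:weak_conv}), and uses the radial-regularity estimates of Lemma~\ref{lem:rad-reg} (from \cite{Capdeboscq:2025:CCC}) to show that the fluxes $A_\varepsilon\partial_r u_\varepsilon$, $\sigma_1^*\partial_r u$ and the tangential gradients are Lipschitz in $r$ with explicitly controlled norms; this is precisely where the dimension-dependent powers of $\kappa$, $|\ln\rho|$, $1/\alpha$, $\gamma_{\max}$ and $1/\rho$ are generated. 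Your alternative route is not implausible, but as written it replaces the theorem's main content with a statement of what would need to be true, so the proof is incomplete. (Minor points: the paper freezes $\sigma_1^*,\sigma_2^*$ at the left endpoint $s_k$, not the midpoint, and the admissibility of a single pair $(\alpha,\gamma)$ is not automatic — in Case 2 of Section~\ref{subsec:homogenization} layers with $\sigma_1^*>1$ force $\gamma$ into a finite window and may require several materials $\gamma^{(i)}$, which is why $\gamma_{\max}$ appears in Proposition~\ref{prop:homres}.)
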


We now discuss the cloaking effect of the laminate structure $A_\varepsilon$ given in \eqnref{A_ep:intro} for a core possessing general conductivity. The enhancement strategy introduced in \cite{Ammari:2013:ENC_1} addresses the cases where the conductivity within the cloaked region $B_{1/2}$ is either constant and known, or where the boundary $\p B_{1/2}$ is perfectly insulated; the latter case is the focus of Theorem \ref{thm:main} in this paper.
This approach was extended to a more general setting by Heumann and Vogelius
\cite{Heumann:2014:AEA}, who achieved enhanced cloaking for inclusions of arbitrary conductivity. Their strategy involves introducing a low-conductivity layer occupying $B_{\rho}\setminus B_{\frac{\rho}{2}}$ prior to the blow-up transformation, which corresponds to $B_{{1}/{2}}\setminus B_{{1}/{4}}$ in the physical domain. 
While this method does not increase anisotropy level according to certain measures proposed in \cite{Heumann:2014:AEA}, it results in a lower minimal directional eigenvalue. In Section
\ref{sec:arbitrary_inclusion}, we apply a similar approach to our proposed laminate $A_\varepsilon$. We show that by incorporating a low-conductivity material within the region $B_{{1}/{2}}\setminus B_{{1}/{4}}$, the laminate $A_{\varepsilon}$ cloaks inclusions of arbitrary conductivity located in $B_{{1}/{4}}$ with an invisibility level of $\rho^{2}$.

The remainder of this paper is organized as follows. In Section \ref{sec:GPT}, we review
the multipole expansion for conductivity inclusion problems, the concept of (contracted) GPTs, and the GPT-vanishing structures. Section \ref{sec:BVP} provides an estimate of the Dirichlet-to-Neumann map for a domain containing a small GPT-vanishing structure. 
In Section \ref{sec:laminate}, we detail the construction
of the proposed radial laminate for an insulated core. Section \ref{sec:main_result} is devoted to prove the proof of Theorem \ref{thm:main}, which primarily focuses on estimating the homogenization error. Section \ref{sec:arbitrary_inclusion} discusses the cloaking effect of the laminate $A_\varepsilon$ when inclusions of arbitrary conductivity are located in the core. In Section \ref{sec:Illustrative-examples}, we present numerical
examples of the proposed cloaking laminates. 
Finally, Section \label{sec:conclusion} provides concluding remarks.

\section{Preliminaries: GPT-vanishing structures}\label{sec:GPT}

\subsection{The concept of GPT-vanishing structures}
Let $H$ be a given harmonic function in $\RR^d$, where $d=2,3$. We consider the following conductivity problem for an inclusion $\Om$ embedded in an otherwise homogeneous medium:
\begin{equation}\label{eq:pb_main}
\begin{cases}
\ds\nabla\cdot \sigma\nabla u=0 \quad& \text{in }\mathbb{R}^{d},\\
\ds u(x)-H(x)=O\big(|x|^{-1}\big)\quad & \text{as }\left|x\right|\rightarrow\infty,
\end{cases}
\end{equation}
where the conductivity distribution $\sigma$ is given by $$\sigma=\sigma_{0}\,\chi_{\mathbb{R}^{d}\setminus\overline{\Omega}}
+\sigma_1\,\chi_\Omega.$$
Here, $\sigma_0$ and $\sigma_1$ are the conductivities (positive
constants) of the background medium and the inclusion $\Om$, respectively, and $\chi$ denotes the indicator function.

The presence of the inclusion $\Om$ perturbs the background field $H$, which would otherwise solve the problem in the absence of the inclusion. We have the multipole expansion \cite{Ammari:2007:PMT}:
\begin{equation}\label{eq:expansion_GPT}
\left(u-H\right)\left(x\right)=\sum_{\left|\alpha\right|=0}^{\infty}\sum_{\left|\beta\right|=0}^{\infty}\frac{\left(-1\right)^{\left|\alpha\right|}}{\alpha!\beta!}\,\partial^{\alpha}\Gamma\left(x\right)M_{\alpha\beta}\,\partial^{\beta}H\left(0\right),\quad\left|x\right|\gg 1,
\end{equation}
where $\alpha=(\alpha_1,\dots,\alpha_d)$ (and similarly for $\beta$)  is the multi-index with $|\alpha|=\alpha_1+\cdots+\alpha_d$ and $x^\alpha=x_1^{\alpha_1}\cdots x_d^{\alpha_d}$ for $x=(x_1,\dots,x_d)$. The coefficients $M_{\alpha\beta}$ are referred to as the \textit{generalized polarization tensors (GPTs)} and depend on the
geometry of the inclusion $\Omega$ and the conductivity contrast ${\sigma}_1/{\sigma_{0}}$.
The function $\Gamma\left(x\right)$ is the fundamental solution of the
Laplacian, that is,
\[
\Gamma\left(x\right)=\begin{cases}
\ds\frac{1}{2\pi}\ln\left|x\right|, & d=2,\\
\ds\frac{1}{\left(2-d\right)\omega_{d}}\left|x\right|^{2-d}, & d=3,
\end{cases}
\]
where $\omega_{d}$ denotes the area of the unit sphere in $\mathbb{R}^{d}$. 

We first discuss the two dimensional case. 
The entire harmonic function $H(x)$ admits the following expansion:
\[
H\left(x\right)=H\left(0\right)+\sum_{k=1}^{\infty}\left(a_{k}^{c}\,r^k\cos (k\theta)+a_{k}^{s}\,r^k\sin (k\theta)\right),
\]
where $x=\left(r\cos\theta,r\sin\theta\right)$.
For each $m\in\NN$, define the coefficients $b_{\alpha}^{c}$ and $b_{\alpha}^{s}$
for multi-indices $\alpha$ satisfying $\left|\alpha\right|=m$ via the identities:
\[
\sum_{\left|\alpha\right|=m}b_{\alpha}^{c}\,x^{\alpha}=r^{m}\cos (m\theta)\quad\mbox{and}\quad\sum_{\left|\alpha\right|=m}b_{\alpha}^{s}\,x^{\alpha}=r^{m}\sin (m\theta).
\]
Following \cite{Ammari:2013:ENC_1}, we define the \textit{contracted generalized polarization tensors (CGPTs)} $M_{mk}^{\text{cc}},\, M_{mk}^{\text{cs}},\, M_{mk}^{\text{sc}}$ and $M_{mk}^{\text{ss}}$ as
\begin{align*}
M_{mk}^{ij}&:=\sum_{\left|\alpha\right|=m}\sum_{\left|\beta\right|=k}b_{\alpha}^{i}\,b_{\beta}^{j}\,M_{\alpha\beta}\qquad \mbox{for all $m,k\in\NN$, $i,j=$c, s},
\end{align*}
which are called the CGPTs. Equation \eqnref{eq:expansion_GPT} becomes
\begin{equation}\label{eq:expansion_d=2}
  \begin{aligned}
\left(u-H\right)\left(x\right)= & -\sum_{m=1}^{\infty}\frac{\cos m\theta}{2\pi mr^{m}}\,\sum_{k=1}^{\infty}\left(M_{mk}^{cc}\,a_{k}^{c}+M_{mk}^{cs}\,a_{k}^{s}\right)\\
 & -\sum_{m=1}^{\infty}\frac{\sin m\theta}{2\pi mr^{m}}\,\sum_{k=1}^{\infty}\left(M_{mk}^{sc}\,a_{k}^{c}+M_{mk}^{ss}\,a_{k}^{s}\right),\qquad\left|x\right|\gg 1.
\end{aligned}  
\end{equation}
We say that the inclusion $\Om$ with conductivity distribution $\sigma$ is a \textit{GPT-vanishing structure} of order $N$ if $$M_{mk}^{ij}=0\quad\mbox{for all }m,k\leq N, \mbox{ and }i,j=\mbox{c},\mbox{s}.$$

\smallskip

Now, we consider the three-dimensional case. The entire harmonic function  {$H(x)$ admits the following expansion:
\beq\label{H:expan}
H(x)=H(0)+\sum_{k=1}^{\infty}\sum_{l=-k}^{k}a_{k}^{l}\,r^{k}\,Y_{k}^{l}\left(\theta,\phi\right),
\eeq
where $x=(r\sin\phi\cos\theta,r\sin\phi\sin\theta,r\cos\phi)$, and $Y_{k}^{l}\left(\theta,\phi\right)$ are the spherical
harmonics of degree $k$ and order $l$.
For each $m\in\NN$ and $-m\leq k\leq m$, define the coefficients $b_{\alpha}^{k}$ for multi-indices $\alpha$ satisfying $\left|\alpha\right|=m$ 
via the identities:
\[
\sum_{\left|\alpha\right|=m}b_{\alpha}^{k}\,x^{\alpha}=r^{m}\,Y_{m}^{k}\left(\theta,\phi\right).
\]
We defined the CGPTs $M_{mk}^{nl}$ as 
\[
M_{mk}^{nl}:=\sum_{\left|\alpha\right|=m}\sum_{\left|\beta\right|=k}b_{\alpha}^{n}\,b_{\beta}^{l}\,M_{\alpha\beta}\quad\mbox{for all }m,k\in\NN,\left|n\right|\le m,\left|l\right|\le k.
\]
Equation \eqnref{eq:expansion_GPT} becomes
\beq\label{multipole:3D}
\left(u-H\right)(x)=\sum_{m=1}^{\infty}\sum_{n=-m}^{m}\frac{Y_{m}^{n}\left(\theta,\phi\right)}{\left(2m+1\right)r^{m+1}}\,\sum_{k=1}^{\infty}\sum_{l=-k}^{k}M_{mk}^{nl}\,a_{k}^{l}.
\eeq
We ssay that an inclusion $\Om$ with conductivity distribution $\sigma$ is a \textit{GPT-vanishing structure} of order $N$ if $$M_{mk}^{nl}=0\quad\mbox{for all }m,k\leq N\mbox{ and all }n,l.$$

\subsection{Construction of GPT-vanishing structures with multicoated balls} 
\label{subsec:GPT_vanishing}

Following the construction in \cite{Ammari:2013:ENC_1,arXiv:2412.18809}, we model the GPT-vanishing structure $\Om$ as a ball with a spherical core and radially symmetric coating layers. We set $\Omega=B_{r_1}(0)\subset\RR^d$ for $d=2,3$, and assume that the coating consists of $L$ concentric layers ($L\in\NN$) with radii ordered $0<r_{L+1}<r_{L}<...<r_{1}$. Let $A_0$ denote the exterior of the structure, $A_{L+1}$ the core, and $A_1,\dots,A_L$ the coating layers; explicitly, 
\beq\label{eq:coating_set}
\begin{aligned}
 A_{0}&=\mathbb{R}^{d}\setminus\overline{\Omega},\qquad 
  A_{L+1}=B_{r_{L+1}}(0),\\
 A_{j}&=\left\{x\in\RR^d\,:\, r_{j+1}<r\le r_{j}\right\}\quad\mbox{for }j=1,...,L.
\end{aligned}
\eeq
We normalize the background conductivity to $\sigma_{0}=1$, and 
let $\sigma_{j}$ denote the constant conductivity in $A_{j}$ for $j=1,...,L+1$.
We set the overall conductivity distribution as
\begin{equation}\label{eq:coating_sigma:general}
\sigma=\sigma_{0}\,\chi_{\mathbb{R}^{d}\setminus\overline{\Omega}}+\sigma_{L+1}\chi_{A_{L+1}}+\sum_{j=1}^{L}\sigma_{j}\chi_{A_{j}}.
\end{equation}
We assume $\sigma_j>0$ for $j=1,\dots,L$, and $\sigma_{L+1}\geq 0$, where $\sigma_{L+1}=0$ corresponds to an insulating core. 

\subsubsection{Two-dimensional case} 
For the radially symmetric structure $\Om$ with the conductivity distribution $\sigma$ given by \eqnref{eq:coating_sigma:general}, 
we have $M_{kk}^{cc}=M_{kk}^{ss}$ for $k=1,2,\dots$, and all other CGPT terms vanish. Hence, the expansion in \eqnref{eq:expansion_d=2} reduces to 
\begin{equation}\label{eq:expansion_d=2_simple}
 \left(u-H\right)\left(x\right)=	-\sum_{k=1}^{\infty}\frac{M_{k}}{2\pi kr^{k}}\left(a_{k}^{c}\,\cos (k\theta) +a_{k}^{s}\,\sin (k\theta)
 \right),\qquad\left|x\right|\gg1,   
\end{equation}
where $M_{k}:=M_{kk}^{cc}=M_{kk}^{ss}$.
The inclusion $\Om$ with the conductivity distribution $\sigma$ is a GPT-vanishing structure of order $N$ if $$M_k=0\quad\mbox{for all }k\leq N.$$

 Ammari et al. \cite{Ammari:2013:ENC_1} provided a characterization of GPT-vanishing structures in the radially symmetric setting, together with numerical examples. We briefly state the result below and refer the reader to that paper for a detailed discussion (see also \cite{arXiv:2412.18809}).

\begin{prop}[\cite{Ammari:2013:ENC_1}]\label{prop:radial:2D} Let $N$ be a positive integer. 
Suppose there exist nonzero constants $\lambda_{1},\ldots,\lambda_{L+1}$ with
$|\lambda_{j}|<1$, and radii $0<r_{L+1}<r_{L}<...<r_{1}$
such that, for every $k=1,\dots,N$,
\[
\prod_{j=1}^{L+1}
\begin{pmatrix}
1 & \lambda_{j}r_{j}^{-2k}\\
\lambda_{j}r_{j}^{2k} & 1
\end{pmatrix}=
\begin{pmatrix}
* & *\\
0 & *
\end{pmatrix},
\quad \mbox{i.e., an upper triangluar matrix}.
\]
Here, the matrix product is taken in left-multiplicative order, that is, $\prod_{j=1}^{L+1}A_j=A_{L+1}\cdots A_1$.
Then the radially symmetric domain $\Om$ with conductivity distribution $\sigma$ defined in (\ref{eq:coating_set})
and (\ref{eq:coating_sigma:general}), whose conductivity values $\sigma_0,\dots,\sigma_{L+1}$ satisfies
\begin{equation}\label{GPT_vanishing_d=2}
  \frac{\sigma_{j}-\sigma_{j-1}}{\sigma_{j}+\sigma_{j-1}}=\lambda_{j},\quad j=1,...,L+1,  
\end{equation}
forms a GPT-vanishing structure of order $N$, that is, $M_{k}=0$ for all
$k\le N$. 
\end{prop}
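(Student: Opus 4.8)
The plan is to reduce the vanishing of the radial CGPTs $M_k$ to an algebraic condition on the layered conductivity profile, via the standard transmission-problem calculus in polar coordinates. First I would fix $k$ and look for the solution $u$ of \eqnref{eq:pb_main} corresponding to the single harmonic mode $H(x)=r^k\cos(k\theta)$ (the $\sin$ mode is identical by rotational symmetry, which is why $M_{kk}^{cc}=M_{kk}^{ss}$). Radial symmetry forces $u$ in each annulus $A_j$ to be of the form $(\alpha_j r^k+\beta_j r^{-k})\cos(k\theta)$, with $\beta_{L+1}=0$ in the core (regularity at the origin) and, in the exterior $A_0$, $\alpha_0=1$ together with the decay coefficient $\beta_0$ that (by \eqnref{eq:expansion_d=2_simple}) is proportional to $M_k$; concretely $M_k=-2\pi k\,\beta_0$ up to a normalization I would pin down. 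So $M_k=0$ is equivalent to $\beta_0=0$.

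Next I would write the transmission conditions at each interface $r=r_j$: continuity of $u$ and continuity of $\sigma\,\partial_r u$. Each pair of conditions linearly relates the coefficient vector $(\alpha_j,\beta_j)$ on one side to $(\alpha_{j-1},\beta_{j-1})$ on the other, giving a $2\times2$ transfer matrix $T_j^{(k)}$. A direct computation shows that after factoring out the common scalar $2\sigma_{j-1}/(\sigma_j+\sigma_{j-1})$ (harmless since it is nonzero), $T_j^{(k)}$ is exactly $\begin{pmatrix}1 & \lambda_j r_j^{-2k}\\ \lambda_j r_j^{2k} & 1\end{pmatrix}$ with $\lambda_j=(\sigma_j-\sigma_{j-1})/(\sigma_j+\sigma_{j-1})$ as in \eqnref{GPT_vanishing_d=2}; the hypothesis $|\lambda_j|<1$ is automatic from $\sigma_{j-1},\sigma_j>0$, and it still makes sense in the insulating limit $\sigma_{L+1}=0$, where $\lambda_{L+1}=-1$ — here one must be slightly careful and treat the core interface by the Neumann condition $\partial_r u=0$ on $\partial B_{r_{L+1}}$ directly, which I would check gives the same formal matrix with $\lambda_{L+1}=-1$. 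Composing across all $L+1$ interfaces, the exterior vector $(1,\beta_0)^{T}$ (up to the global nonzero scalar) equals $\prod_{j=1}^{L+1}T_j^{(k)}$ applied to $(\alpha_{L+1},0)^{T}$; writing this out, $\beta_0$ is $\alpha_{L+1}$ times the lower-left entry of the product matrix. Hence $M_k=0$ for all $k\le N$ is equivalent to the lower-left entry of $\prod_{j=1}^{L+1}\begin{pmatrix}1 & \lambda_j r_j^{-2k}\\ \lambda_j r_j^{2k}&1\end{pmatrix}$ vanishing for $k=1,\dots,N$ — precisely the upper-triangularity hypothesis. (One should also note $\alpha_{L+1}\neq0$: if $\alpha_{L+1}=0$ then the whole solution would vanish, contradicting $\alpha_0=1$, since the transfer matrices are invertible.)

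I would organize the write-up as: (i) set up the modal ansatz and identify $M_k$ with $-2\pi k\beta_0$; (ii) derive the transfer matrix $T_j^{(k)}$ from the two interface conditions by a short elimination, including the insulating-core variant; (iii) observe that the left-multiplicative product $\prod_{j=1}^{L+1}T_j^{(k)}$ being upper triangular is equivalent to $\beta_0=0$, and conclude. The main obstacle — really the only non-routine point — is bookkeeping the normalization constants and matrix orientation so that the product comes out in exactly the left-multiplicative order $A_{L+1}\cdots A_1$ stated in the proposition (it is easy to get the transpose or the reversed order), and handling the degenerate insulating interface $\sigma_{L+1}=0$ cleanly rather than as a limit; everything else is the classical separation-of-variables computation for radial conductivity inclusions, which appears in \cite{Ammari:2013:ENC_1,Ammari:2007:PMT}.
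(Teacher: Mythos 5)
Your proposal is correct and takes essentially the same route as the paper: the paper establishes this result by exactly the transfer-matrix computation you describe, presented in detail for $d=3$ in Section~2.2.2 (modal ansatz, transmission conditions yielding interface matrices $P_j^{(k)}$, and $M_k=-(2k+1)\,p_{21}^{(k)}/p_{22}^{(k)}$, so that GPT-vanishing is equivalent to upper-triangularity of the product), with the two-dimensional case stated to follow by the same argument. The one point to fix in the write-up is the orientation you yourself flag: the paper propagates coefficients from the exterior to the core, so the left-multiplicative product $A_{L+1}\cdots A_1$ acts on $(1,\beta_0)^T$ and the condition $M_k=0$ is read off directly from its $(2,1)$ entry, whereas your core-to-exterior composition produces (up to nonzero scalars) the inverse of that product — harmless in the end, since an invertible $2\times2$ matrix and its inverse have vanishing $(2,1)$ entries simultaneously, but as written your stated scalar factor and product order do not match the direction of your transfer matrices.
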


This characterizations also extends to the case of an insulating core, that is, when $\sigma_{L+1}=0$ and $\lambda_{L+1}=-1$.

\subsubsection{Three-dimensional case} \label{subsub:3D}
An analogous of Proposition~\ref{prop:radial:2D} holds in three dimensions \cite{arXiv:2412.18809}. For completeness, we present the derivation below; the two-dimensional cases follows by the similar argument.

For a radially symmetric structure $\Om$ in three dimensions with the conductivity distribution $\sigma$ given by (\ref{eq:coating_set})
and (\ref{eq:coating_sigma:general}), the CGPTs satisfy the following properties:
\begin{align*}
&M_{mk}^{nl}=0 \ \quad\qquad\text{if }m\ne k\mbox{ or }n\ne l,\\
&M_{kk}^{nn}=M_{kk}^{ll}      \qquad\text{for all } k,\mbox{ and for all }n,l.
\end{align*}
Then the far-field expansion \eqnref{multipole:3D} reduces to
\beq\label{eq:expansion}
\left(u-H\right)(x)=\sum_{k=1}^{\infty}\sum_{l=-k}^{k}M_{k}\,a_{k}^{l}\,\frac{Y_{k}^{l}\left(\theta,\phi\right)}{\left(2k+1\right)r^{k+1}},
\eeq
where $M_{k}:=M_{kk}^{ll}$, which are independent of $l$, and $a_k^l$ are expansion coefficients of $H$ as in \eqnref{H:expan}.

Thanks to the linearity of \eqnref{eq:pb_main} with respect to the background field $H$, it suffices to consider the problem for each basis function in the spherical harmonic expansion. 
Specifically, for each fixed $k\in\NN$, we consider the harmonic background field
$$H(x)=r^k\,Y_k^l(\theta,\phi), \quad l=-k,\dots,k,$$
that is, $a_k^l=1$ and other coefficients in \eqnref{H:expan} are zero.
Then the multipole expansion with the CGPTs given in \eqnref{eq:expansion} leads to 
$$u(x)=r^k\,Y_k^l(\theta,\phi)+\frac{M_k}{2k+1}r^{-k-1}Y_k^l(\theta,\phi),\quad |x|\gg 1.$$

Since $\sigma$ is radially symmetric, we can express $u$ in the form

\beq \label{u:H_k}
u(x)=\left(a_{j}\,r^{k}+ b_{j}\,r^{-k-1}\right)Y_{k}^{l}(\theta,\phi)\quad\mbox{ in }A_j,\ 0\leq j\leq L+1,
\eeq
where
\begin{align} \label{b0:cgpt}
\begin{cases}
\ds a_0=1,\quad b_{0}=\frac{M_k}{2k+1}\quad &\mbox{in }A_0\,(=\RR^3\setminus\overline{\Om}),\\[1mm]
\ds b_{L+1}=0\quad &\mbox{in }A_{L+1}, \mbox{ which is the core}.
\end{cases}    
\end{align}

It holds that
\begin{align*}
r^{k}+\frac{M_{k}}{\left(2k+1\right)r^{k+1}}&\ne 0, \\
kr^{k}-(k+1)\,\frac{M_{k}}{\left(2k+1\right)r^{k+1}}
&\ne0\quad\mbox{for all }r\geq r_1,
\end{align*}
since otherwise either $u(x)=0$ or $\pd{u}{n}(x)=0$ on $|x|=r$, which contradict the fact that $u$ is a nontrivial solution to the elliptic equation \eqnref{eq:pb_main}. 
Here, $n$ denotes the outward unit normal vector.
In particular, we have
\begin{equation}
\mathinner{\left|M_{k}\right|}{\le\left(2k+1\right)r_{1}^{2k+1}}.\label{eq:bound_for_M_kl}
\end{equation}

Now, we consider the transmission conditions for $u$ given in \eqnref{u:H_k}: for $j=1,\dots,L+1$,
\begin{align*}
u_{k}\big|^+&=u_{k}\big|^-,\\
\sigma_{j-1}\frac{\partial u_{k}}{\partial n}\Big|^+&=\sigma_j\frac{\partial u_{k}}{\partial n}\Big|^-\quad\mbox{on } \left\{ r=r_{j}\right\}.
\end{align*}
This leads to the matching conditions:
\[
\begin{aligned}
&\begin{bmatrix}
r_{j}^{k} & {r_{j}^{-k-1}}\\
\sigma_{j}kr_{j}^{k-1} & -{\sigma_{j}\left(k+1\right)}{r_{j}^{-k-2}}
\end{bmatrix}
\begin{pmatrix}
a_{j}\\[1mm]
b_{j}
\end{pmatrix}
=\begin{bmatrix}
r_{j}^{k} & {r_{j}^{-k-1}}\\
\sigma_{j-1}kr_{j}^{k-1} & -{\sigma_{j-1}\left(k+1\right)}{r_{j}^{-k-2}}
\end{bmatrix}
\begin{pmatrix}
a_{j-1}\\[1mm]
b_{j-1}
\end{pmatrix}
\end{aligned}.
\]
Equivalently, this yields the recurrence relation:
\begin{align}\notag
\begin{pmatrix}
a_{j}\\[1mm]
b_{j}
\end{pmatrix}
&=\frac{1}{\left(2k+1\right)\sigma_{j}}
\begin{bmatrix}
k\sigma_{j-1}+\left(k+1\right)\sigma_{j} & \left(k+1\right)\left(\sigma_{j}-\sigma_{j-1}\right)r_{j}^{-\left(2k+1\right)}\\
k\left(\sigma_{j}-\sigma_{j-1}\right)r_{j}^{2k+1} & \left(k+1\right)\sigma_{j-1}+k\sigma_{j}
\end{bmatrix}
\begin{pmatrix}
a_{j-1}\\[1mm]
b_{j-1}
\end{pmatrix}.
\end{align}

By successively applying this relation for $j=1,\dots,L+1$, we obtain the global relation
\begin{equation}\label{eq:GPT_charac}
\begin{pmatrix}
a_{L+1}\\[1mm]
0
\end{pmatrix}
=\mathbf{P}^{\left(k\right)}
\begin{pmatrix}
1\\
b_{0}
\end{pmatrix},
\end{equation}
where the matrix $\mathbf{P}^{\left(k\right)}\in\RR^{2\times 2}$ is defined by
\begin{equation}\notag
\mathbf{P}^{\left(k\right)}
:=\prod_{j=1}^{L+1}P_j^{(k)}
=\begin{bmatrix}
{p}^{(k)}_{11} & {p}^{(k)}_{12}\\[2mm]
{p}^{(k)}_{21} & {p}^{(k)}_{22}
\end{bmatrix},
\end{equation}
with each interface matrix $P_{j}^{(k)}$ given by
\begin{equation}
P_{j}^{(k)}:=\frac{1}{\left(2k+1\right)\sigma_{j}}
\begin{bmatrix}
k\sigma_{j-1}+\left(k+1\right)\sigma_{j} & \left(k+1\right)\left(\sigma_{j}-\sigma_{j-1}\right)r_{j}^{-\left(2k+1\right)}\\
k\left(\sigma_{j}-\sigma_{j-1}\right)r_{j}^{2k+1} & \left(k+1\right)\sigma_{j-1}+k\sigma_{j}
\end{bmatrix}\label{eq:GPT_matrix}. 
\end{equation}
From \eqnref{b0:cgpt} and the second component of \eqnref{eq:GPT_charac}, we deduce the expression for the CGPTs $M_k$: 
\beq\label{M_k:b_0}
M_k=(2k+1)b_{0}=-(2k+1)\frac{{p}_{21}^{(k)}}{{p}_{22}^{(k)}}.
\eeq
Similar to the two dimensional result, $M_{k}=0$ if and only if $\mathbf{P}^{\left(k\right)}$ is an upper triangular matrix. In other words, the conductivity profile $\sigma$ given by \eqnref{eq:coating_sigma:general} is a GPT-vanishing structure of order $N$ if 
\begin{equation}\label{GPT_vanishing:character}
  {p}^{(k)}_{21}=0 \quad\text{for all }k\le N.  
\end{equation}

We now state a characterization of a GPT-vanishing structure in $d=3$.

\begin{prop}\label{prop:radial:3D}
    Let $L$ be a given positive integer. Suppose there exist positive constants $\sigma_{1},\ldots,\sigma_{L+1}$ and $0<r_{L+1}<r_{L}<...<r_{1}$ such that, for all $k=1,\dots,L$, 
    \begin{equation*}   \mathbf{P}^{\left(k\right)}:=\prod_{j=1}^{L+1}P_{j}^{(k)}=\begin{bmatrix}p_{11}^{(k)} & p_{12}^{(k)}\\[2mm]
p_{21}^{(k)} & p_{22}^{(k)}
\end{bmatrix}
\end{equation*}
with 
\begin{equation*}
     P_{j}^{(k)}:=\frac{1}{\left(2k+1\right)\sigma_{j}}\begin{bmatrix}k\sigma_{j-1}+\left(k+1\right)\sigma_{j} & \left(k+1\right)\left(\sigma_{j}-\sigma_{j-1}\right)r_{j}^{-\left(2k+1\right)}\\
k\left(\sigma_{j}-\sigma_{j-1}\right)r_{j}^{2k+1} & \left(k+1\right)\sigma_{j-1}+k\sigma_{j}
\end{bmatrix}
\end{equation*}
is an upper triangular matrix for all $k\le L$. That is, $p_{21}^{(k)}=0$ for all $k\le L$. Then the radially symmteric domain $\Omega$  with conductivity distribution $\sigma$  defined in \eqnref{eq:coating_set} and \eqnref{eq:coating_sigma:general} forms a GPT-vanishing structure of order $L$, that is, $M_{k}=0$ for all $k\le L$.
\end{prop}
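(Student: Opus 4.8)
The plan is to recognize that nearly all of the analysis needed has already been carried out in the derivation preceding the statement, so that the proof of the proposition reduces to justifying the one step left implicit there: that the denominator $p_{22}^{(k)}$ appearing in the identity \eqnref{M_k:b_0} is nonzero. Once this is established, the conclusion follows immediately from that identity.

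First I would recall the reduction set up above. For the radially symmetric profile $\sigma$ of \eqnref{eq:coating_sigma:general}, the only CGPTs that can be nonzero are the diagonal ones $M_k := M_{kk}^{ll}$, which are moreover independent of $l$; and by linearity of \eqnref{eq:pb_main} in the background field it suffices to analyze the response to $H(x) = r^k Y_k^l(\theta,\phi)$. The separated-variable ansatz \eqnref{u:H_k}, the transmission conditions at the interfaces $\{r = r_j\}$, and regularity of $u$ at the origin (which forces $b_{L+1}=0$) together produce the layer recurrence with transfer matrices $P_j^{(k)}$ of \eqnref{eq:GPT_matrix} and the global relation \eqnref{eq:GPT_charac}, whose second row yields $M_k = -(2k+1)\,p_{21}^{(k)}/p_{22}^{(k)}$. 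Consequently, under the hypothesis $p_{21}^{(k)} = 0$ for $k = 1, \dots, L$, and provided $p_{22}^{(k)} \neq 0$, we obtain $M_k = 0$ for every $k \le L$; combined with the vanishing of the off-diagonal CGPTs and the identity $M_{kk}^{nn} = M_{kk}^{ll}$ recorded above, this is exactly the statement that $M_{mk}^{nl} = 0$ for all $m, k \le L$ and all admissible $n, l$, i.e.\ that $\Omega$ is a GPT-vanishing structure of order $L$.

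It remains to check $p_{22}^{(k)} \neq 0$, which I would settle via the determinant. A direct expansion of the $2\times2$ determinant of the interface matrix in \eqnref{eq:GPT_matrix} shows that its cross terms collapse to $(2k+1)^2\,\sigma_{j-1}\sigma_j$, against the prefactor $\big((2k+1)\sigma_j\big)^2$, so $\det P_j^{(k)} = \sigma_{j-1}/\sigma_j$. Taking the product over $j = 1, \dots, L+1$ telescopes to $\det \mathbf{P}^{(k)} = \sigma_0/\sigma_{L+1} > 0$, using $\sigma_0 = 1$ and the positivity of all the $\sigma_j$. Since $p_{21}^{(k)} = 0$ by hypothesis, this gives $p_{11}^{(k)} p_{22}^{(k)} = \det \mathbf{P}^{(k)} > 0$, hence $p_{22}^{(k)} \neq 0$. (Alternatively, well-posedness of the elliptic transmission problem makes the affine map $b_0 \mapsto b_{L+1} = p_{21}^{(k)} + p_{22}^{(k)} b_0$ a bijection on $\RR$, which again forces $p_{22}^{(k)} \neq 0$.) There is no deep obstacle here; the only genuine ingredient beyond bookkeeping is the short determinant identity $\det P_j^{(k)} = \sigma_{j-1}/\sigma_j$, and the main source of error is keeping the transfer-matrix conventions straight --- in particular the left-multiplicative ordering $\mathbf{P}^{(k)} = P_{L+1}^{(k)} \cdots P_1^{(k)}$ and the $r_j^{\pm(2k+1)}$ exponents in \eqnref{eq:GPT_matrix}.
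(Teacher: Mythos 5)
Your proposal is correct and takes essentially the same route as the paper: the paper's proof of this proposition is exactly the transfer-matrix derivation preceding the statement, culminating in $M_k=-(2k+1)\,p_{21}^{(k)}/p_{22}^{(k)}$, so the hypothesis $p_{21}^{(k)}=0$ for $k\le L$ immediately gives $M_k=0$. Your additional check that $\det P_j^{(k)}=\sigma_{j-1}/\sigma_j$, hence $\det\mathbf{P}^{(k)}=\sigma_0/\sigma_{L+1}>0$ and therefore $p_{22}^{(k)}\neq 0$ when $p_{21}^{(k)}=0$, correctly supplies the nondegeneracy step that the paper leaves implicit (and is valid here since the proposition assumes $\sigma_{L+1}>0$; the insulating-core case is treated separately in the paper).
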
 

This characterization can be extended to to the case of an insulating core, that is, when $\sigma_{L+1}=0$. In this case, instead of using $P_{L+1}^{(k)}$ as defined in \eqnref{eq:GPT_matrix}, we take
$$P_{L+1}^{(k)}=
\begin{bmatrix}
0&0\\[1mm]
-k\,r_{L+1}^{2k+1}&k+1
\end{bmatrix}
\quad\mbox{when }\sigma_{L+1}=0.
$$

\section{Boundary value problem with a small GPT-vanishing structure}\label{sec:BVP}
The near-cloaking scheme based on the blow-up transformation of a ball of radius $\rho$ achieves an invisibility level of order $\rho^{d}$ \cite{Kohn:2008:CCV}, where $\rho$ is a small parameter.
This scheme is implemented by applying the pushforward of the conductivity distribution with a small inclusion via the blow-up transformation (see Section~\ref{sec:laminate} for details).
Incorporating a GPT-vanishing structure of order $N$ further enhances the cloaking effect, yielding an invisibility level of order $\rho^{d+2N}$ \cite{Ammari:2013:ENC_1}. This improvement follows from the small-volume expansion for the scaled GPT-vanishing structures.} 

We derive the small-volume expansion for the boundary value problem with a GPT-vanishing structure in three dimensions; the two-dimensional case is analogous.

\subsection{Small volume expansion for the Dirichlet-to-Neumann map}
For a domain $D$ and a conductivity distribution $\gamma$, we denote by $\Lambda_{D}\left[\gamma\right]:H^{\frac{1}{2}}(\p D)\to H^{-\frac{1}{2}}(\p D)$ the Dirichlet-to-Neumann (DtN) map, that is,
$$\Lambda_{D}\left[\gamma\right](\varphi)=\gamma \frac{\p v}{\p n}\Big|_{\p D}\quad\mbox{for }\varphi\in H^{\frac{1}{2}}(\p D),$$
where $n$ means the outward normal vector to $\p\Omega$ and $v$ is the solution to the problem
\beq\notag
\begin{cases}
\nabla\cdot\gamma\nabla v=0 \quad& \text{in }D,\\
v=\varphi \quad& \text{on }\partial D.
\end{cases}
\eeq

Let $\sigma$ denote the GPT-vanishing structure 
of the form given in \eqnref{eq:coating_sigma:general} and constructed in Section \ref{subsec:GPT_vanishing}.
As the main focus of this section, we estimate the DtN map of $\sigma\circ\Psi_{1/\rho}$ on $B_1$,
where $\rho>0$ is a small parameter and $\Psi_{1/\rho}$ is the scaling transformation given by
$$\Psi_{{1}/{\rho}}(x)=\frac{1}{\rho}x.$$
The following relation is useful to derive the expansion for the DtN map of $\sigma\circ\Psi_{1/\rho}$:
\begin{lemma}\label{lemma:eq:dtn}
Fix $s\in(\frac{1}{2},1)$. For $f\in H^{\frac{1}{2}}(\p B_s)$, we have
\begin{equation}\label{eq:dtn}
\Lambda_{B_{\frac{s}{\rho}}}\left[\gamma\right]\left(f\circ\Psi_{\rho}\right)=\rho\,\Lambda_{B_{s}}\big[\gamma\circ\Psi_{{1}/{\rho}}\big]\left(f\right)\circ\Psi_{\rho}\quad \mbox{on }\p B_{\frac{s}{\rho}}.
\end{equation}
\end{lemma}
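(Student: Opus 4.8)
The plan is to prove Lemma~\ref{lemma:eq:dtn} by a direct change-of-variables argument, exploiting that scaling is a conformal-type transformation whose Jacobian interacts cleanly with the conductivity equation. First I would let $v$ solve the Dirichlet problem $\nabla\cdot(\gamma\circ\Psi_{1/\rho})\nabla v=0$ in $B_s$ with $v=f$ on $\partial B_s$, and define $w:=v\circ\Psi_\rho$ on $B_{s/\rho}$, so that $w(y)=v(\rho y)$. A straightforward computation with the chain rule gives $\nabla w(y)=\rho\,(\nabla v)(\rho y)$, hence $\nabla\cdot(\gamma\nabla w)(y)=\rho^2\,\big(\nabla\cdot((\gamma\circ\Psi_{1/\rho})\nabla v)\big)(\rho y)=0$ in $B_{s/\rho}$; here one uses that $\gamma(y)=(\gamma\circ\Psi_{1/\rho})(\rho y)$ by definition of composition. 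Moreover on $\partial B_{s/\rho}$ we have $w=v\circ\Psi_\rho=f\circ\Psi_\rho$, so $w$ is exactly the potential defining $\Lambda_{B_{s/\rho}}[\gamma](f\circ\Psi_\rho)$.

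The second step is to compare the normal derivatives. Since both balls are centered at the origin, the outward unit normal at a point $y\in\partial B_{s/\rho}$ equals the outward unit normal at $\rho y\in\partial B_s$; call it $n$. Then $\dfrac{\partial w}{\partial n}(y)=n\cdot\nabla w(y)=\rho\,n\cdot(\nabla v)(\rho y)=\rho\,\dfrac{\partial v}{\partial n}(\rho y)$. Multiplying by $\gamma(y)=(\gamma\circ\Psi_{1/\rho})(\rho y)$ yields
\[
\Lambda_{B_{s/\rho}}[\gamma](f\circ\Psi_\rho)(y)=\gamma(y)\frac{\partial w}{\partial n}(y)=\rho\,(\gamma\circ\Psi_{1/\rho})(\rho y)\,\frac{\partial v}{\partial n}(\rho y)=\rho\,\Big(\Lambda_{B_s}[\gamma\circ\Psi_{1/\rho}](f)\Big)(\rho y),
\]
which is precisely \eqref{eq:dtn} once we read the right-hand side as $\rho\,\Lambda_{B_s}[\gamma\circ\Psi_{1/\rho}](f)\circ\Psi_\rho$ evaluated at $y\in\partial B_{s/\rho}$.

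I do not anticipate a serious obstacle here; the content is essentially bookkeeping of how gradients, the governing PDE, and normal derivatives transform under the dilation $\Psi_{1/\rho}$. The one point that deserves care is the well-posedness and regularity needed to make "$\dfrac{\partial v}{\partial n}$ on $\partial B_s$" meaningful as an element of $H^{-1/2}$: since $\gamma$ (hence $\gamma\circ\Psi_{1/\rho}$) is piecewise constant with the interfaces being concentric spheres, the DtN map is the standard one for a divergence-form elliptic equation with bounded measurable coefficients bounded below, so $\Lambda_{B_s}[\gamma\circ\Psi_{1/\rho}]$ is well defined and the conormal derivative is interpreted in the weak (variational) sense. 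The identity \eqref{eq:dtn} then follows from the variational characterization of the conormal derivative together with the change of variables $y\mapsto\rho y$ in the weak formulation, so one may alternatively carry out the whole argument at the level of bilinear forms $\int \gamma\nabla w\cdot\nabla\phi$ to avoid any pointwise regularity assumptions; this is the version I would ultimately write.
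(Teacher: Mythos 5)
Your proposal is correct and follows essentially the same route as the paper: solve the Dirichlet problem in $B_s$ with coefficient $\gamma\circ\Psi_{1/\rho}$, observe that its dilation $v\circ\Psi_\rho$ is the potential on $B_{s/\rho}$ with coefficient $\gamma$ and data $f\circ\Psi_\rho$, and read off the factor $\rho$ from the chain rule for the (co)normal derivative. Your added remark about interpreting the conormal derivative variationally is a harmless refinement of the same argument.
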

\begin{proof}
Let $v$ be the solution to the boundary value problem
\begin{equation}\label{eq:u in B_1}
\begin{cases}
\nabla\cdot\left(\gamma\circ\Psi_{\frac{1}{\rho}}\right)\nabla v=0 & \text{in }B_{s},\\
v=f & \text{on }\p B_s.
\end{cases}
\end{equation}
Define $\tilde{v}$ to be the solution to the following problem:
\begin{equation}\notag
\begin{cases}
\nabla\cdot\gamma\nabla\tilde{v}=0 & \text{in }B_{\frac{s}{\rho}},\\
\tilde{v}=f\circ\Psi_{\rho} & \text{on }\p B_{\frac{s}{\rho}},
\end{cases}
\end{equation}
where $\Psi_\rho(s)=\rho x$ is the inverse of $\Psi_{1/\rho}$. 
Then, by a change-of-variable argument, we have the relation
\[
\frac{\partial\tilde{v}}{\partial n}\Big|_{\p B_{\frac{s}{\rho}}}(y)=\rho\,\frac{\partial v}{\partial n}\Big|_{\p B_s}\left(\rho\, y\right),
\]
which implies \eqnref{eq:dtn}.
\end{proof}

Fix $s\in(\frac{1}{2},1)$. 
To simplify the computation, we consider a basis function of the form
\beq\label{def:f_k:spherical}
f_k\left(\theta,\phi\right)=Y_{k}^{l}\left(\theta,\phi\right)\qquad\mbox{for }k\in\NN,\ l=-k,\dots,k.
\eeq
Since $(\frac{r}{s})^{k}Y_{k}^{l}(\theta,\phi)$ is the harmonic function with the Dirichlet data $f_k$ on $\p  B_s$, one can easily find that 
the DtN map for the identity conductivity in $B_s$ is
\beq\label{DtN:identity}
\Lambda_{B_{s}}[1]\left(f_{k}\right)=\frac{k}{s}\, Y_{k}^{l}\left(\theta,\phi\right).
\eeq

We first compute the DtN map $\Lambda_{B_{s}}\left[\sigma\circ\Psi_{{1}/{\rho}}\right]$.
In view of Lemma~\ref{lemma:eq:dtn}, we consider  the problem 
\begin{equation}\label{eq:pB_tilde}
\begin{cases}
\nabla\cdot\sigma\nabla\tilde{u}=0 & \text{in }B_{\frac{s}{\rho}},\\
\tilde{u}=f_k\circ\Psi_{\rho} & \text{on }\p B_{\frac{s}{\rho}},
\end{cases}
\end{equation}
where $f_k$ is given by \eqnref{def:f_k:spherical}. 
Note that $\tilde{u}$ takes the form
\[
\tilde{u}=\left(a_{j}\,r^{k}+b_{j}\,r^{-(k+1)}\right)Y_{k}^{l}\left(\theta,\phi\right)\quad\text{ in }A_{j},\ j=0,1,...,N+1,
\]
where the coefficients $a_j$ and $b_j$ are determined by the transmission conditions on $\p A_j$. In particular, from the boundary condition on $\p B_{\frac{s}{\rho}}$ and the regularity in the core, $A_{N+1}$, for $\tilde{u}$, the outmost and innermost coefficients satisfy the relations:
\begin{align}\label{eq:coeff_for_0}
 & a_{0}\left(\frac{s}{\rho}\right)^{k}+b_{0}\left(\frac{s}{\rho}\right)^{-(k+1)}=1,\quad b_{N+1}=0.
\end{align}
Furthermore, by \eqnref{eq:expansion}, the CGPTs $M_k=M_k(\sigma)$ satisfies
\begin{equation}\label{eq:b_0_intermsof_a_0}
b_{0}=\frac{a_0}{2k+1}M_{k}.
\end{equation}

Solving the system (\ref{eq:coeff_for_0}) and (\ref{eq:b_0_intermsof_a_0}), we can express $b_0$ in terms of $M_k$ as 
\begin{equation}\label{eq:b_0}
b_{0}=\frac{M_{k}\rho^{k}s^{k+1}}{M_{k}\rho^{2k+1}+\left(2k+1\right)s^{2k+1}}.
\end{equation}
By \eqnref{DtN:identity} and \eqnref{eq:coeff_for_0}, we obtain that
\begin{align*}
\left(\Lambda_{B_{\frac{s}{\rho}}}\left[\sigma\right]\left(f_{k}\circ\Psi_{\rho}\right)\right)(\theta,\phi) & =\frac{\partial\tilde{u}}{\partial r}\Big|_{\p B_{\frac{s}{\rho}}}\left(\theta,\phi\right)\\
 & =k\,a_{0}\left(\frac{s}{\rho}\right)^{k-1}Y_{k}^{l}\left(\theta,\phi\right)-\left(k+1\right)b_{0}\left(\frac{s}{\rho}\right)^{-(k+2)}Y_{k}^{l}\left(\theta,\phi\right).\\
 & =k\,\frac{\rho}{s}\,Y_{k}^{l}\left(\theta,\phi\right)-\left(2k+1\right)b_{0}\left(\frac{s}{\rho}\right)^{-(k+2)}Y_{k}^{l}\left(\theta,\phi\right)\\
 &=\rho \,\Lambda_{B_{s}}[1]\left(f_{k}\right)-\left(2k+1\right)b_{0}\left(\frac{s}{\rho}\right)^{-(k+2)}Y_{k}^{l}\left(\theta,\phi\right).
\end{align*}
This together with (\ref{eq:dtn}) leads to 
\begin{align*}
\Lambda_{B_{s}}\big[\sigma\circ\Psi_{{1}/{\rho}}\big]\left(f_{k}\right) & =\Lambda_{B_{s}}\left[1\right]\left(f_{k}\right)-\left(2k+1\right)b_{0}\,\frac{\rho^{k+1}}{s^{k+2}}\,Y_{k}^{l}\left(\theta,\phi\right),
\end{align*}
where $b_0$ satisfies \eqnref{eq:b_0}.

In general, for $f\in H^{\frac{1}{2}}(\p B_s)$ with spherical harmonic expansion 
\beq\label{f:harmonic:expan}
f=\sum_{k=1}^{\infty}\sum_{l=-k}^{k}c_{k}^{l}\, Y_{k}^{l}\left(\theta,\phi\right),
\eeq
linearity of the DtN map together with \eqnref{eq:b_0} yields
\begin{align*}
\left(\Lambda_{B_{s}}\big[\sigma\circ\Psi_{{1}/{\rho}}\big]-\Lambda_{B_{s}}\left[1\right]\right)\left(f\right) & =-\sum_{k=1}^{\infty}\sum_{l=-k}^{k}\frac{\left(2k+1\right)M_{k}\,\rho^{2k+1}}{M_{k}\rho^{2k+1}+\left(2k+1\right)s^{2k+1}}\frac{1}{s}\,c_{k}^{l}\,Y_{k}^{l}\left(\theta,\phi\right).
\end{align*}

For the GPT-vanishing structure of order $N$, we have $M_{k}=0$
for all $k\le N$. For $k\geq N+1$, by (\ref{eq:bound_for_M_kl}), 
\[
\left|M_{k}\right|\rho^{2k+1}\le\left(2k+1\right)\left(\rho r_{1}\right)^{2k+1}.
\]
Hence, given that $\rho\ll 1$ and $s\in(\frac{1}{2},1)$, we have
\begin{align}\notag
\left|\frac{ M_{k}\rho^{2k+1}}{M_{k}\rho^{2k+1}+\left(2k+1\right)s^{2k+1}}\right| & \le\frac{\left(2k+1\right)^{}\left(r_1 \rho \right)^{2k+1}}{\left(2k+1\right)s^{2k+1}-\left(2k+1\right)\left(r_{1}\rho\right)^{2k+1}}\\\notag
 &= \frac{\left(r_1 \rho \right)^{2N+3}}{s^{2N+3}\, (\frac{s}{r_1\rho})^{2k+1-2N-3}-\left(r_{1}\rho\right)^{2N+3}} \\\label{Mk:bound:rho_N}
 & \le C\rho^{2N+3},
\end{align}
 where $C$ is independent of $\rho$ and $k$. 

Recall that $\left\Vert g\right\Vert^2 _{H^{\frac{1}{2}}\left(\partial B_{s}\right)} \simeq \|g\|^2_{L^2(\p B_s)}+\underset{v|_{\p B_s}=g}{\min}\int_{B_s} |\nabla v|^2dx$ and this minimum is attained by the harmonic function $G$ on $B_s$ that solves $\Delta G=0$ in $B_s$ and $G=g$ on $\partial B_s$. Here, $\simeq$ denotes equivalence of norms. For $g$ in the form of \eqnref{f:harmonic:expan} with the coefficients $d_k^l$, we have
 \begin{equation*}
     G(r,\theta,\phi)=\sum_{k=1}^{\infty}\sum_{l=-k}^{k}d_{k}^{l}\left(\frac{r}{s}\right)^k Y_{k}^{l}\left(\theta,\phi\right)
 \end{equation*}
 and, using orthonormal properties of the spherical harmonics, $\lVert \nabla G \rVert^2_{L^2(B_s)}=\int_{\p B_s}G\,\partial_n G \, dS=\sum_{k=1}^{\infty}\sum_{l=-k}^{k} ks \lvert d_k^l \rvert^2$.
 Hence 
 $$c_2\sum_{k=1}^{\infty}\sum_{l=-k}^{k} k \lvert d_k^l \rvert^2\leq \|g\|^2_{H^{\frac{1}{2}}(\p B_s)}\leq c_1\sum_{k=1}^{\infty}\sum_{l=-k}^{k} k \lvert d_k^l \rvert^2$$ 
 for some constants $c_1,c_2>0$ depending on $s$, but not depending on $g$. 
This combined with \eqnref{Mk:bound:rho_N} leads to
\begin{align}\notag
 & \left|\left\langle \left(\Lambda_{B_{s}}\big[\sigma\circ\Psi_{{1}/{\rho}}\big]-\Lambda_{B_{s}}\left[1\right]\right)\left(f\right),\, g\right\rangle _{H^{-\frac{1}{2}}\left(\partial B_{s}\right),\,H^{\frac{1}{2}}\left(\partial B_{s}\right)}\right|\\\notag
 \leq & \, C \sum_{k=1}^\infty\sum_{l=-k}^k \frac{2k+1}{s}\left|\frac{ M_{k}\rho^{2k+1}}{M_{k}\rho^{2k+1}+\left(2k+1\right)s^{2k+1}}\right| |c_k^l\, d_k^l|\\ \notag
 \leq &\, C\rho^{2N+3}\sum_{k,l}k|c_k^l|\, |d_k^l|\\\label{DtN:opperator}
  \le &\, C\rho^{2N+3}\left\Vert f\right\Vert _{H^{\frac{1}{2}}\left(\partial B_{s}\right)}\, \left\Vert g\right\Vert _{H^{\frac{1}{2}}\left(\partial B_{s}\right)},
\end{align}
where $C$ is independent of $\rho$ and $f$.

Now, we consider the boundary value problem
\beq\label{eq:bvp_main}
\begin{cases}
\nabla\cdot\left(\sigma\circ\Psi_{{1}/{\rho}}\right)\nabla v=0 \quad& \text{in }B_1,\\
v=\varphi \quad& \text{on }\partial B_1,\\
\frac{\partial v}{\partial n }=0  \quad& \text{on } \partial B_\rho
\end{cases}
\eeq
for a given $\varphi\in H^{\frac{1}{2}}(\p B_1)$. 
By the Sobolev trace theorem and the $H^{1}$ energy estimate, there exists $C$ independent of $\rho$ (but may depend on $s$) such that
\begin{equation}
\left\Vert v|_{\p B_s}\right\Vert _{H^{\frac{1}{2}}\left(\p B_s\right)}
\le C\left\Vert v\right\Vert _{H^{1}\left(B_{1}\setminus \overline{B_{s}}\right)}\le C\left\Vert v\right\Vert _{H^{1}\left(B_{1}\setminus\overline{B_\rho}\right)} \le C\left\Vert \varphi\right\Vert_{H^{\frac{1}{2}}\left(\p B_1\right)}.\label{eq:f_le_varphi}
\end{equation}
Consequently, 
\begin{equation}\label{eq:cloaking_gpt_prelim}
\left\langle \left(\Lambda_{B_{s}}\big[\sigma\circ\Psi_{{1}/{\rho}}\big]-\Lambda_{B_{s}}\left[1\right]\right)(v|_{\p B_s}),\,v|_{\p B_s}\right\rangle _{H^{-\frac{1}{2}}\left(\partial B_{s}\right),\,H^{\frac{1}{2}}\left(\partial B_{s}\right)}\le C\rho^{2N+3}\left\Vert \varphi\right\Vert^2_{H^{\frac{1}{2}}\left(\p B_1\right)}.
\end{equation}
In what follows, we estimate the left-hand side in terms of $\varphi$. 

For $\varphi$ and $v$ given in \eqnref{eq:bvp_main}, we let $U_\rho$ and $U$ be respectively solutions to 
\beq\notag
\begin{cases}
\ds\Delta U_\rho=0 \quad& \text{in }B_s,\\
\ds U_\rho=v|_{\p B_s} \quad& \text{on }\partial B_s
\end{cases}
\eeq
and
\beq\notag
\begin{cases}
\ds \Delta U=0 \quad& \text{in }B_1,\\
\ds U=\varphi \quad& \text{on }\partial B_1.
\end{cases}
\eeq

We then define $w\in H^{1}\left(B_{1}\right)$ by
\beq\notag
w=\begin{cases}
U_{\rho} & \text{in }B_{s},\\
v & \text{in }B_{1}\setminus B_{s}.
\end{cases}
\eeq
Then, we have
\begin{equation}\label{DtN:varphi}
\begin{aligned} 
&  \left\langle \Lambda_{B_{1}}\big[\sigma\circ\Psi_{{1}/{\rho}}\big]\left(\varphi\right)- \Lambda_{B_{1}}\left[1\right]\left(\varphi\right),\,\varphi\right\rangle_{H^{-\frac{1}{2}}(\p B_1),\,H^{1/2}(\p B_1)}  \\ 
  =\,&\int_{B_{1}}\left(\sigma\circ\Psi_{{1}/{\rho}}\right)\nabla v\cdot\nabla v\, dx-\int_{B_{1}}\nabla U\cdot\nabla U\, dx\\ 
  =\,&\int_{B_{s}}\big(\sigma\circ\Psi_{{1}/{\rho}}\big)\nabla v\cdot\nabla v\, dx
  +\int_{B_1\setminus B_{s}}\nabla v\cdot\nabla v\, dx
  -\int_{B_{1}}\nabla U\cdot\nabla U\, dx\\ 
  =\,&A +B
  \end{aligned}
  \end{equation}
with 
\begin{align*}
    A:=&\left\langle \left(\Lambda_{B_{s}}\big[\sigma\circ\Psi_{{1}/{\rho}}\big]-\Lambda_{B_{s}}\left[1\right]\right)\left(v|_{\p B_s}\right),\, v|_{\p B_s}\right\rangle_{H^{-\frac{1}{2}}(\p B_s),H^{1/2}(\p B_s)},\\
B:= &\int_{B_{1}}\nabla w\cdot\nabla w\, dx-\int_{B_{1}}\nabla U\cdot\nabla U\, dx.
\end{align*}

We estimate $A$ and $B$. By \eqnref{eq:cloaking_gpt_prelim}, it holds that
\beq\label{estimate:A}
 |A|\leq C\rho^{2N+3}\left\Vert v|_{\p B_s}\right\Vert _{H^{\frac{1}{2}}\left(\partial B_{s}\right)}^{2}
\eeq
Since $\left(w-U\right)\in H_{0}^{1}\left(B_{1}\right)$, we have $2\int_{B_1}\nabla(w-U)\cdot\nabla U dx=0$. By subtracting this term from $A$, we obtain 
\begin{align*}
B=&\int_{B_{1}}\nabla \left(w-U\right)\cdot\nabla\left(w-U\right)
=\int_{B_{1}}\nabla w\cdot\nabla\left(w-U\right).
\end{align*}
Note that, because of $\rho\ll 1$, 
$$(\sigma\circ\Psi_{1/\rho})(x)=\sigma_0=1\quad\mbox{in an open set containing }B_1\setminus B_s.$$
By applying the Green's identity and continuity of flux, we derive
\begin{align*}\label{eq:energy_w-u}
B =&\int_{B_{s}}\nabla U_{\rho}\cdot\nabla\left(w-U\right)+\int_{B_{1}\setminus B_{s}}\nabla v\cdot\nabla\left(w-U\right)\\
=&\int_{\partial B_{s}}\frac{\partial\left(U_{\rho}\right)^{-}}{\partial\nu}\left(w-U\right)-\int_{\partial B_{s}}\frac{\partial v^{+}}{\partial\nu}\left(w-U\right)\\
 =&\int_{\partial B_{s}}\frac{\partial\left(U_{\rho}-v\right)^{-}}{\partial\nu}\left(w-U\right)\\
=&\left\langle \left(\Lambda_{B_{s}}\big[\sigma\circ\Psi_{{1}/{\rho}}\big]-\Lambda_{B_{s}}\left[1\right]\right)\left(v|_{\p B_s}\right),\,w-U\right\rangle _{H^{-\frac{1}{2}}(\p B_s),\,H^{1/2}(\p B_s)}.
\end{align*}

Consequently, by \eqnref{DtN:opperator}

\begin{align*}
B & \le C\rho^{2N+3} 
\left\Vert v|_{\p B_s}\right\Vert _{H^{1/2}(\p B_s)}
\left\Vert w-U\right\Vert _{H^{1/2}(\p B_s)}.
\end{align*}
By using the fact that $B=\left\Vert \nabla\left(w-U\right)\right\Vert^2_{L^{2}\left(B_{1}\right)}$ and $\left(w-U\right)\in H_{0}^{1}\left(B_{1}\right)$, 
we argue by the Poincaré inequality to deduce that

\begin{align*}
\sqrt{B}=\left\Vert \nabla\left(w-U\right)\right\Vert_{L^{2}\left(B_{1}\right)} 
\le C\rho^{2N+3} 
\left\Vert v|_{\p B_s}\right\Vert _{H^{1/2}(\p B_s)}
\end{align*}
and, thus
\beq\label{estimate:B}
B\le C \rho^{2(2N+3)}\|v|_{\p B_s}\|_{H^{\frac{1}{2}}(\p B_s)}^2.
\eeq

From \eqnref{eq:f_le_varphi}, \eqnref{DtN:varphi}, \eqnref{estimate:A} and \eqnref{estimate:B}, we obtain
\begin{equation}
\left|\left\langle \Lambda_{B_{1}}\big[\sigma\circ\Psi_{{1}/{\rho}}\big]\left(\varphi\right)-\Lambda_{B_{1}}\left[1\right]\left(\varphi\right),\varphi\right\rangle_{H^{-\frac{1}{2}}(\p B_1),\,H^{1/2}(\p B_1)}  \right|
\le C\rho^{2N+3}\left\Vert \varphi\right\Vert^2_{H^{\frac{1}{2}}\left(\partial B_{1}\right)}.\label{eq:cloaking_gpt_fin}
\end{equation}
Let us the polarization identity for an operator $\Lambda$ defined between Hilbert spaces $H_1$ and $H_2$: for any $\varphi, \psi \in H_1$ and $t>0$,
\begin{equation*}
\begin{aligned}
4\left\langle\Lambda\varphi,\psi\right\rangle
=\left\langle\Lambda\left(t\psi+t^{-1}\varphi\right),\, t\psi+t^{-1}\varphi\right\rangle
-\left\langle\Lambda\left(t\psi-t^{-1}\varphi\right),\, t\psi-t^{-1}\varphi\right\rangle.
\end{aligned}
\end{equation*}
By applying this relation to \eqnref{eq:cloaking_gpt_fin} with $t^{2}=\left\Vert \varphi\right\Vert {}_{H^{\frac{1}{2}}\left(\partial B_{1}\right)}/\left\Vert \psi\right\Vert {}_{H^{\frac{1}{2}}\left(\partial B_{1}\right)}$, 
we deduce that
\begin{equation*}
    \left\Vert \Lambda_{B_{1}}\big[\sigma\circ\Psi_{{1}/{\rho}}\big]-\Lambda_{B_{1}}\left[1\right]\right\Vert\le C\rho^{2N+3}.
\end{equation*}

\section{Design of a laminate cloaking structure for an insulated core}\label{sec:laminate}

In this section, we design a near-cloaking structure consisting of finitely many layers of isotropic materials, built from multi-layer GPT-vanishing constructions with a perfectly insulating core. To make this precise, we introduce the DtN map for domains containing a perfectly insulating subdomain.  
Let $\Om\subset\RR^d(d=2,3)$ and let $B\subset\Om$ be an insulating subdomain. We denote by $\Lambda_{\Omega,B}\left[\sigma\right]$ the DtN map on $\Omega$ associated with the conductivity distribution $\sigma$, in order to emphasize the insulating inclusion $B$. 

As shown in \cite{Kohn:2008:CCV}, if $F:\overline{\Om}\to\overline{\Om}$ is a $C^1$ diffeomorphism  with $F(x)=x$ on $\p \Om$ and $F(B)=B$, then
\beq\label{DtN:trans}
\Lambda_{\Omega,B}\left[\gamma\right]=\Lambda_{\Omega,B}\left[F_{*}\gamma\right],
\eeq
where $F_*\gamma$ is the pushforward conductivity:
\begin{equation}\label{eq:cloak_mat}
\left(F_{*}\gamma\right)(y)=\frac{DF(x)\,\gamma(x)\, (DF(x))^{T}}{\text{det}\left(DF(x)\right)},\quad x=F^{-1}(y)
\end{equation}
and $DF$ is the Jacobian matrix of $F$, i.e., $DF_{ij}=\frac{\partial y_{i}}{\partial x_{j}}$ for $i,j=1,\dots,d$.
We use this property to construct anisotropic cloaking structure.

\subsection{Cloaking design with anisotropic materials via pushforward of GPT-vanishing structures}\label{section:pushforward}

Let ${\sigma}$ be a radially stratified, piecewise-constant conductivity in $\RR^d$, $d=2,3$, defined by finitely many layers of isotropic materials with a perfectly insulating core. For radii $1=r_{L+1}<r_L<\cdots<r_1=2$ ($L$ denotes the number of coating layers), we set
\begin{equation}\label{eq:coating_sigma:reference}
\sigma=\sigma_{0}\,\chi_{\mathbb{R}^{d}\setminus\overline{B_2}}+\sum_{j=1}^{L}\sigma_{j}\chi_{A_{j}}\quad \mbox{with }\sigma_0=1,\quad \sigma_{L+1}=0,
\end{equation}
where
$A_j=\{x\in\RR^d: r_{j+1}<|x|<r_j\}$ for $j=1,\dots,L$ are the coating layers and $A_{L+1}=\{x\in\RR^d: |x|<r_{L+1}\}$ is the core. We assume that the parameters $\sigma_j$ for $1\le j \le L$ are chosen such that ${\sigma}$ is a GPT-vanishing structure of order $N$.  

Now, we consider the boundary value problem in $B_1$ with a perfectly insulating inner core of radius $\rho$: 
\beq\label{cond:smallcore:insul}
\begin{cases}
\ds\nabla\cdot(\sigma\circ\Psi_{1/\rho})\nabla u=0 & \text{in }B_{1}\setminus\overline{B}_{\rho},\\
\ds\frac{\partial u}{\partial n}=0 & \text{on }\p B_\rho,\\
\ds u=f\in H^{\frac{1}{2}}\left(\p B_1\right) & \text{on }\p B_1,
\end{cases}
\eeq
where $\sigma$ is a GPT-vanishing structure with a perfectly insulating core given in \eqnref{eq:coating_sigma:reference} with 
$r_1=2,\ r_{L_1}=1,\ \sigma_0=1$, and $\sigma_{L+1}=0$ and, thus,
$$\sigma\circ\Psi_{1/\rho}
=\begin{cases}
\ds 1 &\quad\mbox{in }B_{1}\setminus B_{2\rho},\\
\ds\mbox{coating layers}&\quad \mbox{in }B_{2\rho}\setminus\overline{B_\rho},\\
\ds 0 &\quad\mbox{in }B_\rho.
\end{cases}
$$

We apply a transformation $y=F(x)$ on $B_1$ (mapping virtual to physical coordinates). This transformation performs a core blow-up, expanding the sub-region $B_\rho$ to $B_{1/2}$. In particular, we set $F:\overline{B_1}\to\overline{B_1}$ to be a radial map defined by 
\begin{align} \label{eq:diffeo_radial}
F(x)=g\left(\left|x\right|\right)\frac{x}{\left|x\right|} \quad \mbox{for }x\neq 0;\quad F(0)=0,
\end{align}
where
\begin{equation} \label{eq:diffeo_radial_g}
g\left(t\right)
=\begin{cases}
\ds\frac{t}{2\rho}\quad & \text{if }t\le\rho,\\[2mm]
\ds\frac{1}{2}\left(\frac{t}{\rho}\right)^{\alpha(\rho)} \quad& \text{if }\rho\le t\le\frac{3}{4},\\[2mm]
\ds t \quad & \text{if }\frac{3}{4}\le t\le1
\end{cases}
\end{equation}
with $\alpha(\rho)$ chosen so that $g^-(\frac{3}{4})=\frac{1}{2}\left(\frac{\frac{3}{4}}{\rho}\right)^{\alpha(\rho)} =\frac{3}{4}$, that is,
\beq\label{def:alpha}
\alpha(\rho)=\frac{\ln\frac{3}{2}}{\ln\frac{3}{4}-\ln\rho}>0.
\eeq
Assuming that $0<\rho<\frac{1}{2}$, we have $0<\alpha(\rho)<1$.

Note that $F:\overline{B_{1}}\rightarrow \overline{B_{1}}$ is a radial, piecewise diffeomorphism such that $F\left(B_{\rho}\right)= B_{\frac{1}{2}}$ and $F(x)=x$ on $\p B_1$.
As in \eqnref{DtN:trans}, one can show that
\[
\Lambda_{B_{1},B_{\rho}}\left[\sigma\circ \Psi_{1/\rho}\right]=\Lambda_{B_{1},B_{\frac{1}{2}}}\left[F_{*}(\sigma\circ\Psi_{1/\rho})\right].
\]
Invoking (\ref{eq:cloaking_gpt_fin}) yields the following theorem for $d=2,3$ (the two-dimensional case was established in \cite{Ammari:2013:ENC_1,Heumann:2014:AEA}.)

\begin{thm} \label{thm:cloaking_gpt}
Let $\sigma\circ\Psi_{1/\rho}$ be the conductivity profile in $\RR^d$ ($d=2,3$) given as in \eqnref{eq:coating_sigma:reference} and \eqnref{cond:smallcore:insul}, yielding a GPT-vanishing structure of order $N$ with a perfectly insulating core $B_\rho$ and coating layers in $B_{2\rho}\setminus\overline{B_\rho}$. Let $F$ be given by \eqnref{eq:diffeo_radial}.

Then there exists a constant $C$ independent of $\rho$ such that 
\[
\left\Vert \Lambda_{B_{1},B_{{1}/{2}}}\left[F_{*}(\sigma\circ\Psi_{1/\rho})\right]-\Lambda_{B_{1}}\left[1\right]\right\Vert _{\mathcal{L}\left(H^{{1}/{2}}\left(\partial B_{1}\right),\,H^{-{1}/{2}}\left(\partial B_{1}\right)\right)}\le C\rho^{2N+d}.
\]
\end{thm}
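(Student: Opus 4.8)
\textbf{Proof proposal for Theorem \ref{thm:cloaking_gpt}.}

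The plan is to combine the transformation invariance of the DtN map \eqnref{DtN:trans} with the small-volume estimate \eqnref{eq:cloaking_gpt_fin} derived in the previous section. First I would observe that the conductivity profile $\sigma\circ\Psi_{1/\rho}$ equals the background value $1$ on the annulus $B_1\setminus B_{2\rho}$, it carries the rescaled coating layers on $B_{2\rho}\setminus\overline{B_\rho}$, and the core $B_\rho$ is insulating; this is precisely the configuration to which the analysis of Section \ref{sec:BVP} applies, once one accounts for the presence of the insulating core via the Neumann condition on $\p B_\rho$ in \eqnref{cond:smallcore:insul} and the corresponding interface matrix $P_{L+1}^{(k)}=\bigl[\begin{smallmatrix}0&0\\-kr_{L+1}^{2k+1}&k+1\end{smallmatrix}\bigr]$ noted after Proposition \ref{prop:radial:3D}. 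The same far-field expansion \eqnref{eq:expansion}, the bound \eqnref{eq:bound_for_M_kl} on $M_k$, the vanishing $M_k=0$ for $k\le N$, and the functional-analytic argument culminating in \eqnref{eq:cloaking_gpt_fin} go through verbatim in the insulating-core case, giving
\[
\bigl\Vert \Lambda_{B_{1},B_\rho}\big[\sigma\circ\Psi_{1/\rho}\big]-\Lambda_{B_{1}}\left[1\right]\bigr\Vert_{\mathcal{L}\left(H^{1/2}(\p B_1),\,H^{-1/2}(\p B_1)\right)}\le C\rho^{2N+d},
\]
with $C$ independent of $\rho$.

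Next I would verify the hypotheses needed to invoke \eqnref{DtN:trans} with $\Omega=B_1$ and $B=B_\rho$ mapped to $B_{1/2}$: the radial map $F$ of \eqnref{eq:diffeo_radial}--\eqnref{eq:diffeo_radial_g} satisfies $F(x)=x$ on $\p B_1$ (since $g(t)=t$ for $t\ge 3/4$), $F(B_\rho)=B_{1/2}$ (since $g(\rho)=1/2$), and $F$ is a bijection of $\overline{B_1}$. Strictly speaking $F$ is only piecewise $C^1$ (it has corners at $|x|=\rho$ and $|x|=3/4$ where $g'$ jumps), so I would note that the pushforward formula \eqnref{eq:cloak_mat} and the invariance \eqnref{DtN:trans} remain valid for bi-Lipschitz piecewise-$C^1$ maps --- this is standard and is exactly the setting used in \cite{Kohn:2008:CCV}; the transmission conditions across the image interfaces are automatically satisfied by $F_*\gamma$. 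Applying \eqnref{DtN:trans} to $\gamma=\sigma\circ\Psi_{1/\rho}$ gives
\[
\Lambda_{B_{1},B_\rho}\big[\sigma\circ\Psi_{1/\rho}\big]=\Lambda_{B_{1},B_{1/2}}\big[F_*(\sigma\circ\Psi_{1/\rho})\big],
\]
and combining this identity with the displayed estimate above yields the claim.

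The only genuine subtlety I anticipate is making sure the small-volume argument of Section \ref{sec:BVP} is legitimately applicable with a \emph{Neumann} condition on the inner sphere rather than a prescribed finite-conductivity core: concretely, one needs the analogue of \eqnref{eq:b_0} with $b_{L+1}=0$ replaced by the insulating condition $\p_r u=0$ on $\p B_\rho$ in the innermost layer, and one needs the bound \eqnref{eq:bound_for_M_kl} on $|M_k|$ to persist. Both are immediate: the recurrence \eqnref{eq:GPT_charac}--\eqnref{eq:GPT_matrix} with the replacement matrix $P_{L+1}^{(k)}$ still gives $M_k=-(2k+1)p_{21}^{(k)}/p_{22}^{(k)}$, the GPT-vanishing hypothesis forces $M_k=0$ for $k\le N$, and for $k>N$ the nontriviality argument preceding \eqnref{eq:bound_for_M_kl} (the solution cannot have vanishing trace or vanishing normal derivative on $|x|=r_1$) is unaffected by the nature of the core, so $|M_k|\le (2k+1)r_1^{2k+1}$ still holds. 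Everything downstream --- the geometric-series bound \eqnref{Mk:bound:rho_N}, the operator-norm estimate \eqnref{DtN:opperator}, the energy splitting \eqnref{DtN:varphi} and the Poincaré/Green's-identity estimates of $A$ and $B$ --- is then word-for-word the same, since those steps only use the structure of $\sigma\circ\Psi_{1/\rho}$ on $B_1\setminus B_s$ and the bound on $M_k$, not the core. I would simply remark that the insulating case is handled identically and point to the relevant displays, rather than reproducing the computation.
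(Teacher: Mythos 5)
Your proposal is correct and follows essentially the same route as the paper: the invariance of the boundary DtN map under the radial blow-up $F$ (the analogue of \eqnref{DtN:trans} with $B_\rho$ sent to $B_{1/2}$) combined with the small-volume estimate \eqnref{eq:cloaking_gpt_fin} of Section~\ref{sec:BVP}. The extra checks you spell out---validity of the pushforward identity for the piecewise-$C^1$, bi-Lipschitz map $F$, and the persistence of the CGPT bound and of \eqnref{eq:cloaking_gpt_fin} in the insulating-core case (where \eqnref{eq:bvp_main} already carries the Neumann condition on $\p B_\rho$)---are precisely the points the paper leaves implicit.
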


We compute $F_{*}(\sigma\circ\Psi_{1/\rho})$ by applying the pushforward conductivity formula in \eqnref{eq:cloak_mat}. First observe that, for $x\neq 0$,
\begin{equation}\notag
\begin{aligned} 
DF(x)
&= g'(|x|)\,\frac{x\otimes x}{|x|^2}
  + \frac{g(|x|)}{|x|}\left(I_d-\frac{x\otimes x}{|x|^2}\right),\\
  \text{det}\left(DF(x)\right)&=g'(|x|)\left(\frac{g(|x|)}{|x|}\right)^{d-1},
\end{aligned}
\end{equation}
where $I_d$ is the $d\times d$ identity matrix and $x\otimes x=x x^T$, with $x$ viewed as a column vector.

Set
$$ y=F(x),\ r=|x|,\ s=|y|,\ \hat{y}=\frac{y}{|y|}.$$
Since $F$ is a radial map, we have 
$\frac{x\otimes x}{|x|^2}=\hat{y}\otimes\hat{y}$. 
For the isotropic and piecewise constant conductivity $\sigma\circ\Psi_{1/\rho}$, the pushforward $F_* (\sigma\circ\Psi_{1/\rho})$ on $B_{1}\setminus B_{\frac{1}{2}}$ can be written explicitly as follows:
\begin{align}\notag
&\left(F_* (\sigma\circ\Psi_{1/\rho})\right)(y)\\\notag
=&\, (\sigma\circ\Psi_{1/\rho})(x)\bigg(g'(|x|)\Big(\frac{g(|x|)}{|x|}\Big)^{d-1}\bigg)^{-1}
\bigg(  \big(g'(|x|)\big)^2\,\frac{x\otimes x}{|x|^2}
  + \Big(\frac{g(|x|)}{|x|}\Big)^2\Big(I_d-\frac{x\otimes x}{|x|^2}\Big) \bigg)\bigg\lvert_{x=F^{-1}(y)}\\ \notag
=&\, \sigma\left(\rho^{-1}|x|\right)\left[\Big(\frac{|x|}{g(|x|)}\Big)^{d-2}
  \bigg( \frac{|x|g'(|x|)}{g(|x|)}\,\hat{y}\otimes\hat{y}
  + \frac{g(|x|)}{|x|g'(|x|)}\left(I_d-\hat{y}\otimes\hat{y}\right) \bigg)\right]\bigg\lvert_{x=F^{-1}(y)}.
  \end{align}
This can be simplified as
\begin{align}
\label{eq:cloak}
\left(F_* (\sigma\circ\Psi_{1/\rho})\right)(y)
&= \sigma\left(\rho^{-1}g^{-1}(|y|)\right) \Big[\lambda_1(y)\,\hat{y}\otimes\hat{y}+\lambda_2(y) \left(I_{d}-\hat{y}\otimes\hat{y}\right)\Big].
\end{align}

For notational convenience, we regard $\lambda_1$ and $\lambda_2$ as radial functions and write $\lambda_1(y)=\lambda_1(|y|)=\lambda_1(s)$ and $\lambda_2(y)=\lambda_2(|y|)=\lambda_2(s)$ for $s=|y|\in[0,1]$. Since $g'(|x|)>0$, both $\lambda_1$ and $\lambda_2$ are positive.

Note that $\lambda_1 $ is the eigenvalue of $F_*1$ corresponding to the radial direction $\hat{y}$ (with multiplicity $1$), and $\lambda_2 $ is the eigenvalue of $F_*1$ corresponding to the tangential directions to the unit sphere $\mathbb{S}^{d-1}$ at $\hat{y}$ (with multiplicity $d-1$). 

We further set 
\begin{align}
\label{def:sigma_12}
\sigma_j^*(y)=\sigma^*_{j}(s):=\sigma\left(\rho^{-1}|x|\right) \lambda_j(s),\ j=1,2.
\end{align}
and simplify the forward conductivity as
\begin{align}\notag
\left(F_* (\sigma\circ\Psi_{1/\rho})\right)(y)
&=\sigma^*_1 \,\left(\hat{y}\otimes\hat{y}\right) + \sigma^*_2\left(I_{d}-\hat{y}\otimes\hat{y}\right).
\end{align}

We can express the eigenvalues $\lambda_1,\lambda_2$ in terms of the inverse function of $g$ given in \eqnref{eq:diffeo_radial_g} as follows. 
Let $s\mapsto g^{-1}(s)$ be the inverse function of $g$, that is,
\begin{equation} \label{eq:g_inv}
g^{-1}\left(s\right)
=\begin{cases}
\ds2s\rho\quad & \text{if }s\le\frac{1}{2},\\[2mm]
\ds\rho\, (2s)^{\frac{1}{\alpha(\rho)}} \quad& \text{if }\frac{1}{2}\le s\le\frac{3}{4},\\[2mm]
\ds s \quad & \text{if }\frac{3}{4}\le s\le1
\end{cases}
\end{equation}
with $\alpha(\rho)$ defined in \eqnref{def:alpha}. Since $s=g(|x|),\ |x|=g^{-1}(s)$, and $(g^{-1})'(s)=\frac{1}{g'(g^{-1}(s)}=\frac{1}{g'(|x|)}$, we have
\begin{align}\label{def:lambda12}
\lambda_1(s)=\left(\frac{g^{-1}\left(s\right)}{s}\right)^{d-2} \frac{1}{\lambda(s)} 
\quad\mbox{and}\quad
\lambda_2(s)=\left(\frac{g^{-1}\left(s\right)}{s}\right)^{d-2}\lambda(s)
\end{align}
with
\begin{align}\label{eq:lambda}
\ \lambda(s):=s\,\frac{\left(g^{-1}\right)^{\prime}\left(s\right)}{g^{-1}\left(s\right)}.
\end{align}
For $g$ given by \eqnref{eq:diffeo_radial_g} and its corresponding inverse in \eqnref{eq:g_inv}, we have
\begin{align}\label{eq:est_lambda}
\lambda(s)
=\begin{cases}
\ds 1,\quad &s<\frac{1}{2},\\
\ds \frac{1}{\alpha(\rho)}, \quad &\frac{1}{2}<s<\frac{3}{4},\\
\ds 1,\quad &\frac{3}{4}<s\le1.
\end{cases}
\end{align}

We define the maximal anisotropy as 
\beq\label{def:chi_max}
\chi_{\text{max}}:=\underset{s\in\left[\frac{1}{2},1\right]}{\operatorname{ess\ sup}}
\left\{
\frac{\lambda_{1}(s)}{\lambda_{2}(s)},\,
\frac{\lambda_{2}(s)}{\lambda_{1}(s)}\right\}.
\eeq
In addition, we define the minimal and maximal directional eigenvalues by
\begin{equation*}\lambda_{\text{min}}:=\underset{s\in\left[\frac{1}{2},1\right]}{\operatorname{ess \ inf}}\left\{ \lambda_{\text{1}}(s),\lambda_{\text{2}}(s)\right\} \quad\text{and}\quad\lambda_{\text{max}}:=\underset{s\in\left[\frac{1}{2},1\right]}{\operatorname{ess\ sup}}\left\{ \lambda_{\text{1}}(s),\lambda_{\text{2}}(s)\right\} .\end{equation*}

It can be verified that, for $F$ given by \eqnref{eq:diffeo_radial} and \eqnref{eq:diffeo_radial_g},
\begin{equation}
\chi_{\text{max}} =\underset{s\in\left[\frac{1}{2},1\right]}{\text{ess sup }} \lambda^2=O \left(\lvert \ln \rho \rvert^2\right),\quad \lambda_{\text{min}}= O\left(\frac{\rho^{d-2}}{\lvert\ln\rho\rvert}\right),\quad \lambda_{\text{max}}=O\left(\lvert \ln\rho \rvert \right).
\end{equation}

\begin{remark}
As shown in \cite{Griesmaier:2014:EAC}, the map $F$ defined by \eqnref{eq:diffeo_radial} and \eqnref{eq:diffeo_radial_g} is the optimal transformation for minimizing $\chi_{\text{max}}$ among all radial transformations $H_{\rho}:\overline{B_{1}}	\rightarrow\overline{B_{1}}$ of the form 
\begin{align*}
H_\rho(x)=\begin{cases}
\ds\frac{x}{2\rho},\quad & x\in B_{\rho},\\
\ds\psi_{\rho}\left(\left|x\right|\right)\frac{x}{\left|x\right|}, \quad & x\in {B_{\frac{3}{4}}}\setminus B_{\rho},\\
\ds x,\quad & x\in\overline{B_{1}}\setminus B_{\frac{3}{4}},
\end{cases}
\end{align*}
where $\psi_{\rho}:\left[\rho,\frac{3}{4}\right]\rightarrow\left[\tfrac{1}{2},\frac{3}{4}\right]$ is a $C^{1}$ diffeomorphism satisfying $$\psi_{\rho}\left(\rho\right)=\tfrac{1}{2},\ \psi_\rho\left(\tfrac{3}{4}\right)=\tfrac{3}{4},\ \mbox{and }\psi_\rho^{\prime}\left(t\right)>0\ \mbox{for all }t\in\left(\rho,\tfrac{3}{4}\right).$$
\end{remark}

\smallskip

In the virtual domain $B_{1}\setminus B_{\rho}$, prior to the pushforward by $F$, the conductivity $\sigma\circ\Psi_{1/\rho}$ varies within the annular region $\left\{ \rho<\left|x\right|<2\rho\right\}$, forming concentric piecewise-constant layers. Consequently, the transformed conductivity $F_{*}(\sigma\circ\Psi_{1/\rho})$ is piecewise smooth and anisotropic, exhibiting jump discontinuities across the interfaces of the multilayered coating. In addition, a discontinuity occurs along $\left|x\right|=\frac{3}{4}$, where $\left(g^{-1}\right)^{\prime}$ is not continuous. 

In \cite{Capdeboscq:2025:CCC}, the authors present a homogenization-based method for designing laminate structures composed of isotropic and homogeneous materials that approximate the anisotropic cloak. The key idea is to construct a sequence of laminated configurations whose effective behavior, in the sense of $H$-convergence, converges to the desired anisotropic conductivity matrix. In Section~\ref{subsec:homogenization}, we refine the approach of \cite{Capdeboscq:2025:CCC} by incorporating GPT-vanishing structures in order to enhance the cloaking effect.

\subsection{Cloaking design with isotropic materials via homogenization}\label{subsec:homogenization}

Let $\sigma\circ \Psi_{1/\rho}$ be as in Theorem~\ref{thm:cloaking_gpt}, and let $\alpha(\rho),\lambda_1,\lambda_2$, and $\sigma^*_1$, $\sigma^*_2$ be defined by \eqnref{def:alpha}, \eqnref{def:lambda12} and \eqnref{def:sigma_12}, respectively.

For $\varepsilon >0$, let $N_{\varepsilon}$ be the smallest positive integer such that 
$$\bigcup_{k=0}^{N_\varepsilon-1}I_k =\left[\tfrac{1}{2},1\right],\quad
I_k:=\left(\tfrac{1}{2}+\varepsilon [ k,\, k+1)\right)\cap\left[\tfrac{1}{2},1\right].$$ 
For each $k$, denote by $s_{k}$ and $s_{k+1}$ the left and right endpoints of $I_{k}$, respectively.

We define the function $A_\varepsilon:\RR^d\to\RR$ by
\begin{equation}\label{eq:Aeps_def}
A_{\varepsilon}\left(y\right)
:=\sum_{k=0}^{N_{\varepsilon}-1}a_{k}\left(\tfrac{s}{\varepsilon}\right)\chi_{I_{k}}(s),\quad s=|y|,
\end{equation}
where, for each $k=0,\ldots,N_{\varepsilon}-1$, the function $a_{k}$ is $1$-periodic and defined by
\[
a_{k}\left(s\right):=\begin{cases}
\ds\alpha, \quad& \ds s\in \left[0,\,l_0\right),\\
\ds 1,\quad &  \ds s\in l_0+[0,\,l_1),\\
\ds\gamma, \quad & \ds s\in l_0+l_1+[0,\,1-l_0-l_1)
\end{cases}
\]
with  material parameters $0<\alpha<\gamma<\infty$.
For each radial position $s_k$, we choose $l_0$ and $l_1$, depending on $s_k$,  $\alpha$, $\gamma$, $\sigma^*_{1}$ and $ \sigma^*_{2}$ so that

\begin{equation}
\begin{cases}
\ds \alpha\, l_0+l_1+\gamma\left(1-l_0-l_1\right)=\sigma^*_{2}(s_k),\\[2mm]
\ds \frac{1}{\alpha}\, l_0+l_1+\frac{1}{\gamma}\left(1-l_0-l_1\right)=\frac{1}{\sigma^*_{1}(s_k)}.
\end{cases}\label{eq:system-parameters}
\end{equation}
For $I_k$ contained in $(\frac{3}{4}, 1]$, we take $l_1=1$ so that $l_0=1-l_0-l_1=0$. In other words, laminates with conductivities $\alpha$ and $\gamma$ appears up to $r=\frac{3}{4}$.

Recall that $\sigma_1^*(s)=\sigma(x) \lambda_1(s)=\sigma\left(\rho^{-1}g^{-1}(s)\right) \lambda_1(s)$ , where $\lambda_1(s)$ is independent of $\sigma$. 
We distinguish two cases:
\begin{itemize}
    \item Case 1. $\sigma_1^*(s)\le 1$ for all $s\in [\frac{1}{2},1]$.
    \item Case 2. $\sigma_1^*(s)> 1$ for some $s\in [\frac{1}{2},1]$.
\end{itemize}
Assuming that $\rho$ is sufficiently small, it follows from \eqnref{def:lambda12} and \eqnref{eq:lambda} that $\lambda_1\le 1$. Case 1 is the generic case that holds if $\rho$ is sufficiently small. We analyze the two cases separately below.

\smallskip

{\bf Case 1 ($\sigma_1^*(s)\le 1$ for all $s\in [\frac{1}{2},1]$).}
In this case, we further assume that $\alpha$ and $\gamma$ are chosen so that
\beq\label{eq:parameter_conditions1}
\max_{s\in[\frac{1}{2},1]} \frac{1-\sigma^*_2}{\frac{1}{\sigma^*_1}-1}<\alpha<\frac{1}{{\max}_{s\in[\frac{1}{2},1]} \,\frac{1}{\sigma^*_{1}(s)}}
\eeq
and
\beq\label{eq:parameter_conditions12}
\gamma>\underset{s\in[\frac{1}{2},1]}{\max}\left(\tfrac{\sigma^*_{2}(s)-\alpha}{1-\frac{1}{\sigma^*_{1}(s)}\alpha}\right).
\eeq
Such a choice of $\alpha$ is possible when $\rho$ is sufficiently small, and, having fixed $\alpha$, we then choose $\gamma$ accordingly so that it satisfies \eqnref{eq:parameter_conditions12}. 

Under the conditions \eqnref{eq:parameter_conditions1} and \eqnref{eq:parameter_conditions12}, the following inequalities hold in particular at each $s_k$ appearing in \eqnref{eq:system-parameters}:
\begin{equation}\label{cond:alpha;beta;good}
\frac{1-\sigma^*_2(s_k)}{\frac{1}{\sigma^*_1(s_k)}-1}<\alpha<\frac{1}{\sigma^*_{1}(s_k)}
\quad\mbox{ and }
\gamma>\tfrac{\sigma^*_{2}(s_k)-\alpha}{1-\frac{1}{\sigma^*_{1}(s_k)}\alpha}.
\end{equation}
Consequently, the system \eqnref{eq:system-parameters} admits solutions $l_0$ and $l_1$ satisfying
\begin{equation}\label{eq:proportions_3mat}
\begin{cases}
l_0=\tfrac{\alpha\left(\sigma^*_{2}+\frac{1}{\sigma^*_{1}}\gamma-\gamma-1\right)}{\left(1-\alpha\right)\left(\gamma-\alpha\right)},\\[3mm]
l_1=\tfrac{\sigma^*_{2}+\frac{1}{\sigma^*_{1}}\alpha\gamma-\alpha-\gamma}{\left(\alpha-1\right)\left(\gamma-1\right)},\\[3mm]
1-l_0-l_1=\tfrac{\gamma\left(\sigma^*_{2}+\frac{1}{\sigma^*_{1}}\alpha-\alpha-1\right)}{\left(\gamma-\alpha\right)\left(\gamma-1\right)},\\[3mm]
0\le l_0,\,l_1,\, \left(1-l_0-l_1\right)\le 1,
\end{cases}
\end{equation}
where, for brevity, $\sigma_1^*$ and $\sigma_2^*$ are understood to be evaluated at $s_k$. 

\smallskip

{\bf Case 2 ($\sigma_1^*(s)> 1$ for some $s\in [\frac{1}{2},1]$).} 
From \eqnref{def:lambda12} and \eqnref{eq:lambda}, it holds that $\lambda_2> \lambda_1$. For $s_k$ satisfying $\sigma_1^*(s_k)>1$, we have $\sigma_2^*(s_k)>1$ and 
\begin{equation*}
\frac{1-\sigma^*_2(s_k)}{\frac{1}{\sigma^*_1(s_k)}-1}>\frac{1-\sigma^*_1(s_k)}{\frac{1}{\sigma^*_1(s_k)}-1}=\frac{1}{\sigma^*_{1}(s_k)}.
\end{equation*}
In the case where $\sigma_1^*(s_k)>1$, one cannot choose $\alpha$ and $\gamma$ to satisfy \eqnref{cond:alpha;beta;good}. Instead, we use three isotropic materials with conductivity values $\alpha_k$, $1$, and $\gamma_k$, where $\alpha_k$ and $\gamma_k$ are chosen to satisfy
\begin{equation}\label{eq:parameter_conditions2}
0<\alpha_k<1 \quad\text{and} \quad  \frac{\sigma^*_{2}(s_k)-\alpha_k}{1-\frac{1}{\sigma^*_{1}(s_k)}\alpha_k} < \gamma_k < \frac{1-\sigma^*_2(s_k)}{\frac{1}{\sigma^*_1(s_k)}-1},
\end{equation}
if $ \tfrac{\sigma^*_{2}(s_k)-\alpha_k}{1-\frac{1}{\sigma^*_{1}(s_k)}\alpha_k} <\tfrac{1-\sigma^*_2(s_k)}{\frac{1}{\sigma^*_1(s_k)}-1}$. 
The proportions $l_0$, $l_1$, and $(1-l_0-l_1)$ of the conductivities $\alpha_k$, $1$, and $\gamma_k$, respectively, are then computed from  \eqnref{eq:proportions_3mat} with $\alpha=\alpha_k$ and $\gamma=\gamma_k$. The condition \eqnref{eq:parameter_conditions2} ensures that these proportions lie in $(0,1)$. 

We can relax the condition on $\alpha$ in \eqnref{eq:parameter_conditions1} so that it applies to both {\bf Case 1} and {\bf Case 2}. 
Note that \eqnref{eq:parameter_conditions1} implies $\alpha <1$, so $\alpha$ represents a low conducting isotropic material compared to the background conductivity $1$. Moreover, in \eqnref{eq:parameter_conditions2} we require $\alpha_k<1$. We may therefore choose a low conductivity $\alpha$ as $\alpha_k$  satisfying (where this $\alpha$ is independent of $k$)

\beq\label{eq:alpha_cond}
	\max_{s\in[\frac{1}{2},1],~ \sigma_1^*(s)<1} \frac{1-\sigma^*_2}{\frac{1}{\sigma^*_1}-1}
	<\alpha<
\min_{s\in[\frac{1}{2},1],~ \sigma_1^*(s)<1}	\sigma_1^*(s),
\eeq
where the maxima and minima are taken over those $s$ for which $\sigma_1^*(s)<1$. 
This choice of $\alpha$ is valid in both {\bf Case 1} and {\bf Case 2}.

Now we turn to the high conducting constituent isotropic material $\gamma$. Take $\alpha$ satisfying \eqnref{eq:alpha_cond}. We choose $\gamma$ independent of $k$ so that it satisfies either \eqnref{eq:parameter_conditions12} or \eqnref{eq:parameter_conditions2}, whenever such a choice is possible; this situation is illustrated in Figure~\ref{fig:L=4} in Section \ref{sec:Illustrative-examples}. 

However, in {\bf Case 2}, it may happen that no such $\gamma$ exists. 

Note that
$$b_1(s):=\frac{\sigma^*_{2}(s)-\alpha}{1-\frac{1}{\sigma^*_{1}(s)}\alpha} <\frac{1-\sigma^*_2(s)}{\frac{1}{\sigma^*_1(s)}-1}:=b_2(s)\qquad\mbox{whenever }\sigma_1^*(s)>1.$$

 Suppose $s_k$ and $s_{k^\prime}$ are two radial values such that  $\sigma_1^*(s_k)>1$ and $\sigma_1^*(s_{k^\prime})>1$. This means that on $(s_k,s_{k+1})$ and $(s_{k^\prime},s_{k^\prime+1})$, we require $b_1(s_k)<\gamma_k<b_2(s_k)$ and $b_1(s_{k^\prime})<\gamma_{k^\prime}<b_2(s_{k^\prime})$ respectively, by condition \eqnref{eq:parameter_conditions2}. If $b_1(s_k)>b_2(s_{k^\prime})$ or $b_2(s_k)<b_1(s_{k^\prime})$, then the construction requires two highly conducting materials, say, $\gamma^{(1)}$ and $\gamma^{(2)}$. This situation is illustrated in Figure \ref{fig:L=6}, where two high conducting materials are required.

Since $a_k(\frac{s}{\varepsilon})$ is $\varepsilon$-periodic, for each $s_k$ we have
\beq\label{property:a_k}
\begin{aligned}
\int_{s_k}^{s_k+\varepsilon} a_k\left(\tfrac{s}{\varepsilon}\right)\,ds
&=\varepsilon\left(\alpha\, l_0+l_1+\gamma\left(1-l_0-l_1\right)\right)
=\varepsilon\,\sigma^*_2(s_k),\\
\int_{s_k}^{s_k+\varepsilon}\left(a_k\left(\tfrac{s}{\varepsilon}\right)\right)^{-1}\,ds
&=\varepsilon\left(\frac{1}{\alpha}\, l_0+l_1+\frac{1}{\gamma}\left(1-l_0-l_1\right)\right)
=\varepsilon\,\frac{1}{\sigma^*_1(s_k)}.
\end{aligned}
\eeq

We now quantify the weak convergence of the coefficients of the heterogeneous medium defined by \eqnref{eq:Aeps_def}. For this purpose, we introduce the following notation.
\begin{notation}\label{notation:1}
We define
\beq\label{def:kappa}
\kappa:=\max\bigg\{ \underset{|x|\in[\frac{1}{2},1]}{\max}\sigma(x),\ \Big( \underset{|x|\in[\frac{1}{2},1]}{\min}\sigma(x)\Big)^{-1}\bigg\}
\eeq
and
\begin{equation}\label{notation_ab}
a_{\rho,\varepsilon}= 1+\varepsilon \left|\ln\rho\right|,
\quad
b_{\rho,\varepsilon}=1+\varepsilon \,\frac{\left|\ln\rho\right|}{\rho}.
\end{equation}
\end{notation}

\begin{lemma} \label{lem:weak_conv} 
Fix $\rho>0$ sufficiently small, and let $0<\varepsilon\ll \rho$.  

Let  $\sigma\circ\Psi_{1/\rho}$ be as in Theorem~\ref{thm:cloaking_gpt}, $\sigma_{1}^{*}$, $\sigma_{2}^{*}$ defined by \eqnref{def:sigma_12} with $\lambda_{1},\lambda_{2}$ defined by  \eqnref{def:lambda12}. Set $\kappa, a_{\rho,\varepsilon}, b_{\rho,\varepsilon}$ as in Notation~\ref{notation:1}. 
Finally, let $A_\varepsilon$ be given by \eqnref{eq:Aeps_def}, defined on the domain $\Omega:=B_1\setminus\overline{B}_{\frac{1}{2}}$.
There exists a constant $C$ such that, for any $f\in W^{1,\infty}(\Omega)$, 
\begin{align} 
\left|\int_{\Omega}\big(A_{\varepsilon}-\sigma^*_2\big)(y)f(y)\, dy\right|
& \leq
C\kappa\, \varepsilon
\cdot
\begin{cases}
\ds\left|\ln \rho \right| \left\Vert \partial_{r}f\right\Vert_{L^{\infty}(\Omega)}+\left|\ln \rho\right|\left\Vert f\right\Vert_{L^{\infty}(\Omega)}, & d=2,\\[2mm]
\ds a_{\rho,\varepsilon}\left\Vert \partial_{r}f\right\Vert_{L^{\infty}(\Omega)}+\left|\ln\rho\right|\left\Vert f\right\Vert_{L^{\infty}(\Omega)}, & d=3
\end{cases}\label{eq:weak_lim_1}
\end{align}
and
\begin{align}
\left|\int_{\Omega}\left(\frac{1}{A_{\varepsilon}}-\frac{1}{\sigma^*_1}\right)(y)f(y)\,dy\right| &\leq
C\kappa\, \varepsilon\cdot
\begin{cases}
\ds\left|\ln \rho\right|\left\Vert \partial_{r}f\right\Vert_{L^{\infty}(\Omega)}+\left|\ln \rho\right|\left\Vert f\right\Vert_{L^{\infty}(\Omega)},
 & d=2,\\[2mm]
\ds b_{\rho,\varepsilon}\left\Vert \partial_{r}f\right\Vert_{L^{\infty}(\Omega)}+\frac{\left|\ln\rho\right|}{\rho}\left\Vert f\right\Vert_{L^{\infty}(\Omega)}, & d=3.
\end{cases}\label{eq:weak_lim_2}
\end{align}
\end{lemma}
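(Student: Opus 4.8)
The plan is to exploit the piecewise-periodic structure of $A_\varepsilon$ on each radial cell $I_k=[s_k,s_{k+1})$ and compare the integral of $A_\varepsilon f$ against the integral of $\sigma_2^*f$ cell by cell, using the two averaging identities in \eqnref{property:a_k}. First I would pass to polar coordinates, writing $\int_\Omega (A_\varepsilon-\sigma_2^*)f\,dy=\int_{1/2}^1\!\!\int_{\mathbb S^{d-1}} (a_k(s/\varepsilon)-\sigma_2^*(s))\,f(s\omega)\,s^{d-1}\,dS(\omega)\,ds$ and splitting the $s$-integral as $\sum_{k} \int_{I_k}$. On a fixed cell, $a_k(s/\varepsilon)$ is $\varepsilon$-periodic with mean exactly $\sigma_2^*(s_k)$ by the first line of \eqnref{property:a_k}, so the cell contribution is the integral against the oscillating, mean-zero (over each $\varepsilon$-subinterval) function $a_k(s/\varepsilon)-\sigma_2^*(s_k)$, plus a correction coming from replacing $\sigma_2^*(s)$ by $\sigma_2^*(s_k)$ and $s^{d-1}$ times the angular average of $f$ by its value at $s_k$.

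The key estimates are then: (i) the oscillation term is controlled by $\varepsilon$ times the Lipschitz constant of $s\mapsto s^{d-1}\overline f(s)$ on $I_k$ (where $\overline f(s)$ denotes the spherical average of $f(s\,\cdot)$), times the $L^\infty$-size of $a_k(s/\varepsilon)-\sigma_2^*(s_k)$; here one uses the standard fact that integrating a mean-zero $\varepsilon$-periodic function of amplitude $M$ against a Lipschitz weight over a union of whole periods costs $O(\varepsilon M \cdot \mathrm{Lip})$ per cell, and there are $O(1/\varepsilon)$ cells, giving $O(\varepsilon M\,\mathrm{Lip})$ in total; (ii) the term from the variation of $\sigma_2^*$ across $I_k$ is controlled by $\varepsilon \|\partial_s\sigma_2^*\|_{L^\infty(I_k)}\|f\|_{L^\infty}$ summed over cells. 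The sizes that enter are: $\|a_k(s/\varepsilon)\|_{L^\infty}\le\gamma=O(\kappa|\ln\rho|)$ by \eqnref{eq:boundgamma}, and $\sigma_2^*(s)=\sigma(\rho^{-1}g^{-1}(s))\lambda_2(s)$ with $\lambda_2=O(|\ln\rho|)$ in $d=2$ (since $\lambda_2=\lambda$ and $(g^{-1}(s)/s)^{d-2}=1$) and $\lambda_2=O(\rho^{-1}|\ln\rho|)$ possibly in $d=3$ near $s=1/2$ because of the factor $(g^{-1}(s)/s)^{d-2}=g^{-1}(s)/s$; also $\|\sigma(\rho^{-1}g^{-1}(\cdot))\|_{L^\infty}\le\kappa$ and $|\partial_s\sigma_2^*|$ across a cell is bounded using that $\sigma(\rho^{-1}g^{-1}(\cdot))$ is piecewise constant (so its distributional derivative is a sum of $O(L)$ Dirac masses, contributing only $O(\kappa\cdot\text{jump sizes})$ to finitely many cells) while $\lambda_2$ is smooth with $|\partial_s\lambda_2|=O(|\ln\rho|)$ or $O(\rho^{-1}|\ln\rho|)$. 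Collecting these gives the factor $C\kappa$ together with the bracketed $\rho$-dependent weights; the appearance of $a_{\rho,\varepsilon}=1+\varepsilon|\ln\rho|$ versus $b_{\rho,\varepsilon}=1+\varepsilon|\ln\rho|/\rho$ reflects exactly whether $\lambda_2$ or $\lambda_1$ (equivalently $1/\sigma_1^*$, handled identically using the second line of \eqnref{property:a_k} and $1/\lambda_1=\lambda/(g^{-1}/s)^{d-2}$) produces the $\rho^{-1}$. The second inequality \eqnref{eq:weak_lim_2} is proved the same way with $a_k$ replaced by $1/a_k$, whose $L^\infty$-size is $1/\alpha=O(\kappa|\ln\rho|\rho^{2-d})$ by \eqnref{eq:boundalpha}, and whose cell-mean is $1/\sigma_1^*(s_k)$.

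The main obstacle I anticipate is bookkeeping the boundary effects at cell interfaces and at the finitely many radii $r_j/\rho$ where $\sigma\circ\Psi_{1/\rho}$ jumps (and at $s=3/4$ where $(g^{-1})'$ jumps): a cell $I_k$ straddling such a discontinuity does not have $a_k$ with the clean mean $\sigma_2^*(s_k)$ throughout, so one must argue that there are only $O(1)$ such exceptional cells and bound their contribution crudely by $\varepsilon\cdot\|a_k\|_{L^\infty}\cdot\|f\|_{L^\infty}\cdot s^{d-1}$, which is absorbed into the stated bound since $\|a_k\|_{L^\infty}$ and $\|1/a_k\|_{L^\infty}$ are already of the claimed order. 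A secondary technical point is that $N_\varepsilon$ is defined so the last cell may be truncated by $[1/2,1]$; this truncation affects only one cell and is handled by the same crude estimate. Once these exceptional cells are dispatched, the remaining generic cells give precisely the telescoping Lipschitz-weight estimate, and summing over the $O(1/\varepsilon)$ cells produces the overall factor $\varepsilon$ cancelling the per-cell $1/\varepsilon$ count — wait, more precisely each generic cell contributes $O(\varepsilon^2\cdot M\cdot\mathrm{Lip})$ so that the sum over $O(1/\varepsilon)$ cells is $O(\varepsilon\,M\,\mathrm{Lip})$, which is the asserted right-hand side after inserting the sizes of $M=\|a_k\|_\infty$ (or $\|1/a_k\|_\infty$) and $\mathrm{Lip}=\|\partial_r f\|_\infty+\|f\|_\infty$ together with the $\rho$-dependent magnitudes of $\sigma_1^*,\sigma_2^*$ and their radial derivatives.
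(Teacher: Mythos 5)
Your cell-by-cell decomposition, the use of the exact averaging identities \eqnref{property:a_k}, and the separate crude treatment of the $O(L)$ exceptional cells containing interfaces (and the truncated last cell) follow the same outline as the paper's proof, and for $d=2$ your argument does yield the stated bound. However, there is a genuine gap in the $d=3$ case: you bound the oscillation term by $\varepsilon$ times the \emph{uniform amplitude} $M=\|a_k\|_{L^\infty}$ (resp.\ $\|1/a_k\|_{L^\infty}$), taken from the material values $\gamma$ and $1/\alpha$ of \eqnref{eq:boundgamma}--\eqnref{eq:boundalpha}. With $\gamma=O(\kappa|\ln\rho|)$ this gives at best $C\kappa\varepsilon|\ln\rho|\,\|\partial_r f\|_{L^\infty}$, which is strictly weaker than the claimed $C\kappa\varepsilon\, a_{\rho,\varepsilon}\|\partial_r f\|_{L^\infty}$, since $a_{\rho,\varepsilon}=1+\varepsilon|\ln\rho|\approx 1\ll|\ln\rho|$ in the regime $\varepsilon\ll\rho$; the same loss (in fact worse, a factor of order $|\ln\rho|\rho^{-1}$ or $|\ln\rho|\rho^{-3/(2N+3)}$) occurs for \eqnref{eq:weak_lim_2}. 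The claimed $d=3$ rates require replacing the sup-amplitude by \emph{averages}: on each cell $\int_{I_k}a_k=\varepsilon\,\sigma_2^*(s_k)$ and $\int_{I_k}a_k^{-1}=\varepsilon/\sigma_1^*(s_k)$ by \eqnref{property:a_k}, so the oscillation term is controlled by $\varepsilon\|\partial_r f\|_{L^\infty}$ times $\|A_\varepsilon\|_{L^1}$ (resp.\ $\|1/A_\varepsilon\|_{L^1}$), and these $L^1$ norms are only $O(\kappa\,a_{\rho,\varepsilon})$ and $O(\kappa\,b_{\rho,\varepsilon})$ because $\int_{1/2}^{1}(g^{-1})'\,ds=O(1)$ and $\int_{1/2}^{1}\lambda^2/(g^{-1})'\,ds=O(1)$ even though the pointwise suprema are $|\ln\rho|$ and $|\ln\rho|/\rho$ (see \eqnref{lambda_est_3d} and \eqnref{est_siglam_d=3}). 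Incidentally, the lemma's bound is stated purely in terms of $\kappa$ and does not presuppose the particular choices \eqnref{eq:boundalpha}--\eqnref{eq:boundgamma}, so invoking them here is both unnecessary and insufficient.

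A second, related problem is your treatment of the term measuring the variation of $\sigma_2^*$ (resp.\ $1/\sigma_1^*$) across cells in $d=3$ via the pointwise bounds $|\partial_s\lambda_2|=O(|\ln\rho|)$, $|\partial_s(1/\lambda_1)|=O(\rho^{-1}|\ln\rho|)$: these are false. Since $\lambda_2=(g^{-1})'$ with $(g^{-1})'(s)=g^{-1}(s)/(\alpha(\rho)s)$ on $(\tfrac12,\tfrac34)$, one has $\sup|(g^{-1})''|\sim\alpha(\rho)^{-2}\sim|\ln\rho|^2$ (and similarly $\sup|\partial_s(\lambda^2/(g^{-1})')|\sim|\ln\rho|^2/\rho$), so a sup-of-derivative bound loses a logarithm (and your per-cell bookkeeping as written also drops a factor of $\varepsilon$). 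What saves the estimate is that these profiles are \emph{monotone} on the relevant intervals, so the relevant quantity is the total variation, $\sum_k\int_{I_k}|\sigma_2^*(s_k)-\sigma_2^*(s)|\,ds\le \varepsilon\cdot\mathrm{TV}\le C\kappa\varepsilon|\ln\rho|$ by a telescoping argument (and $C\kappa\varepsilon|\ln\rho|/\rho$ for $1/\sigma_1^*$), exactly as in the paper. With these two repairs---$L^1$/mean-based control of the oscillation term and a monotonicity--telescoping bound for the sampling error---your scheme coincides with the paper's proof; as written, it only establishes a weaker inequality in three dimensions.
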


\begin{proof}
Set $x=F^{-1}(y)$ as before. For notational convenience, we write $A_\varepsilon(s)=A_\varepsilon(y)$ and $\sigma(t)=\sigma(x)$ with $s=|y|$,  $t=|x|$. This is unambiguous since both functions are radial. Recall that the coating layer radii are ordered as $1<r_L<\cdots<r_1=2$.

We first consider the case $d=2$.   In this setting, $\lambda_2\left(t\right)=\frac{1}{\lambda_1}(t)=\lambda\left(t\right)$, as given in \eqnref{eq:lambda}. Note that $A_\varepsilon(t)$ is a real positive valued. 
From \eqnref{eq:Aeps_def} and \eqnref{property:a_k}, we have
\beq
\begin{aligned}
&\int_{\frac{1}{2}}^{1}A_{\varepsilon}\left(s\right)\, dt-\int_{\frac{1}{2}}^{1}\sigma^*_2\left(s\right)\, ds\\ \notag
=&\sum_{k=0}^{N_{\varepsilon}-2}\int_{I_{k}}\Big[a_k\left(\tfrac{s}{\varepsilon}\right)-\sigma^*_2(s)\Big]\, ds
 +\int_{I_{N_\varepsilon-1}}\Big[a_k\left(\tfrac{s}{\varepsilon}\right)-\sigma^*_2(s)\Big]\, ds\\
=&\sum_{k=0}^{N_{\varepsilon}-2}\int_{I_{k}}\Big[\sigma^*_2(s_k)-\sigma^*_2(s)\Big]\, ds
 +\int_{I_{N_\varepsilon-1}}\Big[a_k\left(\tfrac{t}{\varepsilon}\right)-\sigma^*_2(s)\Big]\, ds.
\end{aligned}\label{eq:Aeps-ave}
\eeq

The function $s\mapsto \sigma^*_2(s)$ is piecewise constant since both $\sigma$ and $\lambda$ are piecewise constant. In particular, $s\mapsto\sigma^*_2(s)$ has jump discontinuities at 
$${\widetilde{r}}_j:=g(r_j),$$ 
which corresponds to the radial locations of the transformed coating interfaces, and at $s=\frac{3}{4}$.
In the physical domain--that is, after applying the pushforward by $F$--the coating structure occupies the annulus $B_{g(2\rho)}\setminus B_{\frac{1}{2}}$. By \eqnref{eq:est_lambda}, it holds that for $s\in[\tfrac{1}{2},1]$,
 \begin{equation}\label{eq:lambda_sigma}
\sigma^*_2(s)
=\begin{cases}
  \ds   \tfrac{1}{\alpha(\rho)}\,\sigma_{j},&s\in \left(\widetilde{r}_{j+1},\widetilde{r}_{j}\right)\mbox{ for } 1\le j\le L,\\[3mm]
 \ds   \tfrac{1}{\alpha(\rho)},& s\in \left(g(2\rho),\,\frac{3}{4}\right),\\[3mm]
\ds  1,  &s\in\left(\frac{3}{4},\, 1\right).
\end{cases}
\end{equation}

If $\widetilde{r}_{j}\notin\text{int}\left(I_{k}\right)$ for every $j$, then $\sigma^*_2$ is constant on $I_{k}$. Hence,
\begin{equation*}
\int_{I_{k}}\Big[\sigma^*_2\left(s_{k}\right)\chi_{I_{k}}(s)-\sigma^*_2(s)\Big]\, ds=0.\label{eq:integral_cell}
\end{equation*}
On the other hand, if $\widetilde{r}_{j}\in\text{int}\left(I_{k}\right)$ for some $j$, then
\begin{equation*}
\int_{s_{k}}^{s_{k+1}}\Big|\sigma^*_2\left(s_{k}\right)\chi_{I_k}(s)-\sigma^*_2(s)\Big|\, ds
\le\varepsilon\max\{\sigma_j,\sigma_{j+1}\} \,  \frac{1}{\alpha(\rho)}\leq \varepsilon\max\sigma\max\lambda
\end{equation*}
and such cases can occur in at most $L$ microcells.  
This implies
\begin{align}
 \bigg\Vert \sum_{k=0}^{N_{\varepsilon}-2}\sigma^*_2\left(s_{k}\right)\chi_{I_{k}}-\sigma^*_2\bigg\Vert_{L^{1}\left(\Omega\right)} & \le C\varepsilon\max\sigma\max\lambda.\label{eq:int1_est}
\end{align}
Furthermore, by construction, on $B_1\setminus B_{\frac{3}{4}}\supset I_{N_\varepsilon-1}$, $a_k(y)=1=\sigma \lambda$. Thus, 
$$
\begin{aligned}
\int_{I_{N_\varepsilon-1}}\left|a_k\left(\tfrac{s}{\varepsilon}\right)\chi_{I_{N_\varepsilon-1}}(s)-\sigma^*_2(s)\right|\, ds
=0.
\end{aligned}
$$
Hence, using \eqnref{eq:est_lambda}, \eqnref{eq:Aeps-ave} and (\ref{eq:int1_est}), one has 
\begin{align}
\left\Vert A_{\varepsilon}\right\Vert_{L^{1}\left([\frac{1}{2},1]\right)} 
\leq C\max\sigma\max\lambda\leq C\max\sigma\left|\ln\rho\right|.\label{eq:L1_a_d=2}
\end{align}

We decompose the integral in \eqnref{eq:weak_lim_1} as
\begin{align}\label{S1plusS2}
\int_{\Omega}\left(A_{\varepsilon}-\sigma^*_2\right)(y)f(y)\, dy & =S_{1}+S_{2}+S_3
\end{align}
with

\begin{align*}
S_{1} &:= \sum_{k=0}^{N_{\varepsilon}-2}\int_{|y|\in I_{k}}
\Big(a_{k}\!\left(\tfrac{y}{\varepsilon}\right)-\sigma^*_2(s_{k})\Big)\,f(y)\,dy, \\
S_{2} &:= \sum_{k=0}^{N_{\varepsilon}-2}\int_{|y|\in I_{k}}
\Big(\sigma^*_2(s_{k})-\sigma^*_2(y)\Big)\,f(y)\,dy,
\end{align*}
and $S_3$ corresponding to the integral for $|x|\in I_{N_\varepsilon -1}$ so that
\begin{align*}
\left|S_3\right|=0.
\end{align*}
From the averaging relations in \eqnref{property:a_k}, we have
$
\int_{ I_{k}}\big(a_{k}\!\left(\tfrac{s}{\varepsilon}\right)-\sigma^*_2(s_{k})\big)\,ds = 0,
$
which yields 
\begin{align*}
\left|S_{1} \right| 
& =\left|\sum_{k=0}^{N_{\varepsilon}-2}\int_{|y|\in I_{k}}
\Big(a_{k}\left(\tfrac{y}{\varepsilon}\right)
-\sigma^*_2(s_{k})\Big)\Big(f(y)-f(s_k\hat{y})\Big)\, dy\right|\\
&\leq\sum_{k=0}^{N_{\varepsilon}-2}\int_{|y|\in I_{k}}
\Big(a_{k}\left(\tfrac{y}{\varepsilon}\right)
+\sigma^*_2(s_{k})\Big) \varepsilon \left\Vert \partial_{r}f\right\Vert_{L^{\infty}\left(\Omega\right)}\, dy\\
&\leq C\left(\left\Vert A_{\varepsilon}\right\Vert_{L^{1}\left(\frac{1}{2},1\right)}+\left\Vert \sum_{k=0}^{N_\varepsilon-2} 1_{I_k}\sigma_2^*\left(s_k\right)\right\Vert_{L^{1}\left(\frac{1}{2},1\right)}\right)\varepsilon\,\left\Vert \partial_{r}f\right\Vert_{L^{\infty}\left(\Omega\right)}\\[2mm]
&\leq C\max\sigma\cdot \varepsilon\,\left|\ln\rho\right|\left\Vert \partial_{r}f\right\Vert_{L^{\infty}\left(\Omega\right)}.
\end{align*}
Moreover, by \eqnref{eq:int1_est}, we simply bound
\begin{align*}
S_{2} & \le\bigg\Vert \sum_{k=0}^{N_{\varepsilon}-1}\sigma^*_2\left(s_{k}\right)\chi_{I_{k}}-\sigma^*_2\bigg\Vert_{L^{1}\left(\Omega\right)}\left\Vert f\right\Vert_{L^{\infty}\left(\Omega\right)}
\leq C\max\sigma\cdot\varepsilon\left|\ln \rho\right|\left\Vert f\right\Vert_{L^{\infty}\left(\Omega\right)}.
\end{align*}
Hence, we prove \eqnref{eq:weak_lim_1} for $d=2$. 

As an analogue of \eqnref{eq:L1_a_d=2}, we obtain
\begin{align*}
\bigg\Vert \frac{1}{A_{\varepsilon}}\bigg\Vert_{L^{1}\left([\frac{1}{2},1]\right)} & \le C\max\left(\frac{1}{\sigma}\right)\left|\ln\rho\right|.
\end{align*}
Indeed, if $\widetilde{r}_{j}\in\text{int}\left(I_{k}\right)$ for some $j$, which can occur for at most $L$ microcells, then \begin{equation}
\int_{s_{k}}^{s_{k+1}}\Big|\sigma^*_1\left(s_{k}\right)\chi_{I_{k}}(s)-\sigma^*_1(s)\Big|\,ds
\le\varepsilon\max\left\{\frac{1}{\sigma_{j}},\frac{1}{\sigma_{j+1}} \right\}\max \lambda\leq\varepsilon\max\frac{1}{\sigma}\,\frac{1}{\alpha(\rho)}.
\end{equation}
Therefore, by \eqnref{eq:est_lambda} and \eqnref{eq:lambda_sigma}, we have
\begin{equation}\label{L1_1/a_d=2}
\begin{aligned}
 \left\Vert \frac{1}{A_{\varepsilon}}\right\Vert_{L^{1}\left(\left[\frac{1}{2},1\right]\right)}	&\le\left\Vert \sigma^*_1\right\Vert_{L^{1}\left(\left[\frac{1}{2},1\right]\right)}+\bigg\Vert{\sum_{k=0}^{N_{\varepsilon}-1}\sigma^*_1\left(s_{k}\right)\chi_{I_{k}}}-\sigma^*_1\bigg\Vert_{L^{1}\left(\Omega\right)}\\
&\le\left\Vert \sigma^*_1\right\Vert _{L^{1}\left(\left[\frac{1}{2},1\right]\right)}+C\varepsilon\max\frac{1}{\sigma}\max\lambda\\
&\leq C\max\frac{1}{\sigma}\left|\ln\rho\right|.
\end{aligned}
\end{equation}
Following the arguments using $S_{1}$ and $S_{2}$ in \eqnref{S1plusS2}, we prove \eqnref{eq:weak_lim_2} for $d=2$.

Now, we prove the claim for for $d=3$. From \eqnref{def:lambda12} and \eqnref{eq:lambda},
\begin{equation*}
\lambda_1(s)=\frac{\left(g^{-1}\right)^{\prime}}{\lambda^2}(s),\quad
\lambda_2(s)  =\left(g^{-1}\right)^{\prime}(s)\quad\mbox{for }s\in \left[\frac{1}{2},1\right].
\end{equation*}

From \eqnref{eq:Aeps_def} and \eqnref{property:a_k}, we have
\beq\notag
\begin{aligned}
&\int_{\frac{1}{2}}^{1}A_{\varepsilon}(s)\, ds-\int_{\frac{1}{2}}^{1}\sigma^*_2(s)\, ds\\ \notag
=&\sum_{k=0}^{N_{\varepsilon}-2}\int_{I_{k}}\Big[\sigma^*_2\left(s_{k}\right)-\sigma^*_2(s)\Big]\, ds
 +\int_{I_{N_\varepsilon-1}}\Big[a_k\left(\tfrac{s}{\varepsilon}\right)-\sigma^*_2(s)\Big]\, ds\\
=&\sum_{k=0}^{N_{\varepsilon}-2}\int_{I_{k}}\left[\left(\sigma\left(g^{-1}\right)^{\prime}\right)\left(s_{k}\right)-\left(\sigma\left(g^{-1}\right)^{\prime}\right)(s)\right]\, ds
 +\int_{I_{N_\varepsilon-1}}\left[a_k\left(\tfrac{s}{\varepsilon}\right)-\left(\sigma\left(g^{-1}\right)^{\prime}\right)(s)\right]\, ds.
\end{aligned}
\eeq

For $g$ given by \eqnref{eq:diffeo_radial_g}, the mapping
$t\mapsto \left(g^{-1}\right)^{\prime}(t)$ is increasing on $[\frac{1}{2},\frac{3}{4}] $, and we have
\begin{equation}\label{lambda_est_3d}\begin{cases}
   \ds \underset{t\in[\frac{1}{2},1]}{\text{ess sup}}\left(g^{-1}\right)^{\prime}(t)
    = \left(g^{-1}\right)^{\prime}\left(\frac{3}{4}\right)&\le C\lvert \ln \rho \rvert,\\[2mm]
 \ds   \left\Vert \left(g^{-1}\right)^{\prime}(\cdot)\right\Vert _{L^{1}\left([\frac{1}{2},1]\right)} &\le C.
\end{cases} 
\end{equation}
Furthermore, the mapping $t\mapsto \left(g^{-1}\right)^{\prime}\left(t\right)$ is piecewise increasing for $t$ on the interval $[\frac{1}{2}, g(2\rho)]$, that corresponds the coating structure.

Fix $k$. If $\widetilde{r}_{j}:=g(r_j)\notin\text{int}\left(I_{k}\right)$ for every $j$, then $\sigma$ contantly takes the same value on $I_k$ and
\begin{equation}\label{int_same sigma}
\int_{I_k}\Big|\sigma^*_2\left(s_{k}\right)-\sigma^*_2(s)\Big| \,ds
\leq 
\varepsilon\,\sigma\big|_{I_{k}}\cdot
\Big[\left(g^{-1}\right)^{\prime}\left(s_{k+1}\right)-\left(g^{-1}\right)^{\prime}\left(s_k\right)\Big].  
\end{equation}
On the other hand, if $\widetilde{r}_{j}:=g(r_j)\in\text{int}\left(I_{k}\right)$ for some $j$, then we can split $I_{k}$ into two microcells, namely, $I_{k,1}:=[t_{k},\widetilde{r}_{j}]$ and $I_{k,2}:=[\widetilde{r}_{j},t_{k+1}]$ each being mapped to a constant value via $\sigma$. The number of such microcells is at most $2L$.

Now, fix $j$. Define $$\mathcal{I}_j=\bigcup J,\quad\mbox{by taking the union of all }J\in\big\{I_k,I_{k,1},I_{k,2}\,:\, \sigma\big|_J =\sigma_j\big\}.$$

Assuming the values $\sigma_j$ are distinct values, $\mathcal{I}_j$ forms an interval, say $\mathcal{I}_j=[a_j, b_j]\subset[\frac{1}{2},1]$. Then, by a telescoping argument,
\begin{align*}
\int_{\mathcal{I}_j}\Big|\sigma^*_2\left(s_{k}\right)-\sigma^*_2(s)\Big| \,ds
&\le \sum_{J\in \mathcal{I}_j}\int_J\Big|\sigma^*_2\left(s_{k}\right)-\sigma^*_2(s)\Big| \,ds \\
&\le 
\varepsilon\sigma_j\Big[\left(g^{-1}\right)^{\prime} \left(b_j\right)-\left(g^{-1}\right)^{\prime}\left( a_j\right) \Big]\\
&\le C \varepsilon \left|\ln \rho\right|,
\end{align*}
where the last inequality follows from \eqnref{lambda_est_3d}.

A similar argument applies to the region outside the coating structure $B_1\setminus B_{g(2\rho)}$, where $\sigma=1$ and $t\mapsto\left(g^{\prime}(t)\right)^{-1}$ is increasing.
By \eqnref{int_same sigma}, \eqnref{lambda_est_3d}, and a telescoping argument, one obtains
\begin{equation*}
\sum_{k\text{ s.t. }\sigma\lvert_{I_{k}}=1}\int_{I_{k}}\Big|\left(g^{-1}\right)^{\prime}\left(s_{k}\right)-\left(g^{-1}\right)^{\prime}(s)\Big|\,ds\le C\varepsilon\underset{t\in[\frac{1}{2},1]}\,{\text{ess sup}}\left(g^{\prime}(t)\right)^{-1}\le C\varepsilon\left|\ln\rho\right|.    
\end{equation*}
Therefore, with the $L^{1}$ estimate in \eqnref{lambda_est_3d}, 
\begin{equation}\label{eq:L1_a_d=3}
    \left\Vert A_{\varepsilon}\right\Vert _{L^{1}\left(\left[\frac{1}{2},1\right]\right)}\le C\max\sigma\left(1+\varepsilon\left|\ln\rho\right|\right).
\end{equation}

On the other hand, $t\mapsto\frac{\lambda^{2}}{\left(g^{-1}\right)^{\prime}}\left(t\right)$ is decreasing on $\left[\frac{1}{2},\frac{3}{4}\right]$ and 
\begin{equation}\label{est_siglam_d=3}\begin{cases}
\underset{\left[\frac{1}{2},1\right]}{\text{ess sup}}\frac{\lambda^{2}}{\left(g^{-1}\right)^{\prime}}(t)=\frac{\lambda^{2}}{\left(g^{-1}\right)^{\prime}}\left(\frac{1}{2}\right) & \le C\frac{\left|\ln\rho\right|}{\rho},\\
\left\Vert \frac{\lambda^{2}}{\left(g^{-1}\right)^{\prime}}\right\Vert _{L^{1}\left([\frac{1}{2},1]\right)} & \le C .
\end{cases}\end{equation}
Using similar arguments as above, we have
\begin{align*}
\left\Vert \frac{1}{A_{\varepsilon}}\right\Vert_{L^{1}\left(\frac{1}{2},1\right)} \le& \left\Vert \frac{1}{\sigma^*_{1}}\right\Vert _{L^{1}\left(\left[\frac{1}{2},1\right]\right)}+C\varepsilon\max\frac{1}{\sigma}\max\frac{1}{\lambda_{1}}
\le C\max\frac{1}{\sigma}\left(1+\varepsilon\frac{\left|\ln\rho\right|}{\rho}\right).
\end{align*}
Following the arguments using $S_{1}$ and $S_{2}$ in \eqnref{S1plusS2}, we prove \eqnref{eq:weak_lim_2} for $d=3$. 
\end{proof}

\section{Proof of the main result}\label{sec:main_result}
Throughout this section, given $\varphi\in H^{\frac{1}{2}}\left(r=1\right)$, let $u$ and $u_\varepsilon$ be the solutions to
\begin{equation}\label{eq:def_u}
\begin{cases}
\ds\nabla\cdot \left(F_{*}(\sigma\circ\Psi_{1/\rho})\, \nabla u\right)=0 \quad& \text{in }B_{1}\setminus{B}_{\frac{1}{2}},\\
\ds\frac{\partial u}{\partial\nu}=0 \quad& \text{on }|x|=\frac{1}{2},\\
\ds u=\varphi \quad& \text{on }|x|=1
\end{cases}
\end{equation}
and
\begin{equation}\label{eq:def_u_ep}
\begin{cases}
\ds\nabla\cdot\left(A_{\varepsilon}\nabla u_{\varepsilon}\right)=0 \quad& \text{in }B_{1}\setminus{B}_{\frac{1}{2}},\\
\ds\frac{\partial u_{\varepsilon}}{\partial\nu}=0 \quad& \text{on }|x|=\frac{1}{2},\\
\ds u_{\varepsilon}=\varphi \quad& \text{on }|x|=1,
\end{cases}
\end{equation}
respectively, where $F_{*}(\sigma\circ\Psi_{1/\rho})$ is defined in (\ref{eq:cloak}) and $A_{\varepsilon}$ in \eqnref{eq:Aeps_def}.

For $r=|x|>\frac{3}{4}$, it follows from \eqnref{eq:diffeo_radial_g} and \eqnref{eq:proportions_3mat} that $F(x)=x$, $\sigma^*_1(r)=\sigma^*_2(r)=1$, and hence $l_0(r)=0$ and $l_1(r)=1$.
Consequently, $$A_\varepsilon(y)=I_d \quad\mbox{throughout }B_1\setminus B_{1-\delta},$$ provided that $\delta$ is sufficiently small.

By the boundary conditions on $r=1$ and $r=\frac{1}{2}$, the radial symmetry of $F_* \sigma$ and $A_\varepsilon$, and the fact that $F_* \sigma=A_\varepsilon$ in  $B_{1}\setminus B_{1-\delta}$, we obtain 
\begin{align}\notag
  &\int_{B_{1}\setminus B_{\frac{1}{2}}} F_{*}(\sigma\circ\Psi_{1/\rho})\,\nabla u\cdot\nabla u\, dy-\int_{B_{1}\setminus B_{\frac{1}{2}}}A_{\varepsilon}\nabla u_{\varepsilon}\cdot\nabla u_{\varepsilon}\, dy\\ \notag
  =\,&\int_{B_{1}\setminus B_{\frac{1}{2}}}F_{*}(\sigma\circ\Psi_{1/\rho})\,\nabla u\cdot\nabla u_{\varepsilon}\, dy-\int_{B_{1}\setminus B_{\frac{1}{2}}}A_{\varepsilon}\nabla u_{\varepsilon}\cdot\nabla u\, dy\\ \label{energy:differ}
  =\,&\int_{\Omd}\left(F_{*}(\sigma\circ\Psi_{1/\rho})-A_{\varepsilon}\right)\nabla u_{\varepsilon}\cdot\nabla u\, dy
\end{align}
with
\[
\Omega_{\delta}:=B_{1-\delta}\setminus B_{\frac{1}{2}}.
\]
We also introduce the notation: for a function $f$ on $\Omega_\delta$, 
\[
\nabla_{\phi}f:=\left(I_{d}-\hat{x}\otimes\hat{x}\right)\nabla f,
\]
whenever the derivatives exist. 
Note that $\nabla_{\phi}f$ represents the component of the gradient vector orthogonal to the radial direction. 
We also use the notation  $\Delta_{\phi}f$ to denote the tangential Laplace--Beltrami operator which is $\partial_{\theta\theta}$ when $d=2$.
In the following derivations, we employ harmonic techniques to address boundary measurements by localizing our estimates to $\Omega_\delta$.

The coefficients $A_\varepsilon$ in the cloaking domain depends only on the radial variable. This radial symmetry ensures higher regularity of the solution, as established in earlier works (see, for instance, \cite{Chipot:1986:SLL,Li:2000:GES}). We now state the result in the following lemma, adapted to our setting with precise estimates, which corresponds to the one derived in \cite[Lemma 14]{Capdeboscq:2025:CCC}.
\begin{lemma}[\cite{Capdeboscq:2025:CCC}]\label{lem:rad-reg}
For any real functions $m_{1},m_{2}\in L^{\infty}\left((\tfrac{1}{2},1)\right)$, let $M\in L^{\infty}\left(B_{1},\mathbb{R}^{d\times d}\right)$
be an uniformly elliptic, positive definite matrix-valued function such that
$$M(x)=
\begin{cases}
\ds I_{d},\quad &x\in B_{1}\setminus B_{1-\delta},\\
\ds
m_{1}(|x|)\,\hat{x}\otimes\hat{x}+m_{2}(|x|)\bigl(I_{d}-\hat{x}\otimes\hat{x}\bigr),\quad &x\in B_{1}\setminus B_{\frac{1}{2}}\\
0&x\in B_{\frac{1}{2}}.
\end{cases}
$$
 For $\varphi\in H^{\frac{1}{2}}\left(\partial B_{1}\right)$, let $f\in H^{1}(B_{1}\setminus B_{\frac{1}{2}})$ be the weak solution of the boundary value problem 
\begin{equation}\label{eq:regul-eq}
\begin{cases}
\ds\nabla\cdot\left(M\nabla f\right)  =0\quad&\text{in }B_{1}\setminus B_{\frac{1}{2}},\\
\ds \frac{\partial f}{\partial n}  =0\quad&\text{on }\partial B_{\frac{1}{2}},\\
\ds f  =\varphi\quad &\text{on }\partial B_{1}.
\end{cases}
\end{equation}
Then we have 
$$f,\nabla_{\phi}f, \Delta_{\phi}f\in C(\overline{\Omd};\mathbb{R}^{d}),
\quad m_{1}\partial_{r}f\in W^{1,\infty}\left(\Omd\right), \quad m_{1}\partial_{r}\left(\nabla_{\phi}f\right)\in W^{1,\infty}(\Omd;\mathbb{R}^{d}).$$
Moreover, the following estimates hold with a constant $C$ independent of $\varphi$:

\beq\label{Lemma:harmonic:eeq}
\begin{aligned}
&\max_{\overline{\Omd}}\,\bigl(\left|f\right|+\left|\nabla_{\phi}f\right|\bigr)
\leq C\left\Vert \varphi\right\Vert_{H^{{1}/{2}}\left(\partial B_{1}\right)};\\
&\max_{\overline{\Omd}}\,\bigl(\left|m_{1}\partial_{r}f\right|+\left|m_{1}\partial_{r}(\nabla_{\phi}f)\right|\bigr)
\leq C\Bigl(1+\left\Vert m_{2}\right\Vert_{L^{1}(\frac{1}{2},1)}\Bigr)\left\Vert \varphi\right\Vert_{H^{{1}/{2}}\left(\partial B_{1}\right)};\\
&\bigl|\partial_{r}(m_{1}\partial_{r}f)(x)\bigr| 
+ \bigl|\partial_{r}(m_{1}\partial_{r}\nabla_{\phi}f)(x)\bigr|
\le C\,|m_{2}(x)|\,\|\varphi\|_{H^{1/2}(\partial B_{1})} 
\quad \text{for a.e. } x \in \Omega_\delta.
\end{aligned}
\eeq
\end{lemma}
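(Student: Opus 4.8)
The plan is to separate variables and reduce the vectorial boundary value problem to a one–parameter family of scalar Sturm--Liouville problems, exploiting that $M$ is radially stratified on $B_{1}\setminus B_{1/2}$. Expanding the data as $\varphi=\sum_{k\ge 0}\sum_{l}\varphi_{k}^{l}Y_{k}^{l}$ in spherical harmonics (and as a Fourier series $\{\cos k\theta,\sin k\theta\}$ when $d=2$), and writing $f=\sum_{k,l}c_{k}^{l}(r)Y_{k}^{l}$, the relation $-\Delta_{\phi}Y_{k}^{l}=\mu_{k}Y_{k}^{l}$ with $\mu_{k}=k(k+d-2)$, together with the radial form of $M$, forces each coefficient to solve
\[
\bigl(r^{d-1}m_{1}(c_{k}^{l})'\bigr)'=\mu_{k}\,r^{d-3}m_{2}\,c_{k}^{l}\ \text{ on }(\tfrac12,1),\qquad
\lim_{r\to 1/2^{+}}m_{1}(c_{k}^{l})'=0,\qquad c_{k}^{l}(1)=\varphi_{k}^{l}.
\]
Because $\nabla_{\phi}$, $\Delta_{\phi}$ and multiplication by the radial functions $m_{1},m_{2}$ all act diagonally on the $Y_{k}^{l}$, the quantities in the lemma split modewise, e.g.\ $\nabla_{\phi}f=\sum c_{k}^{l}\nabla_{\phi}Y_{k}^{l}$, $\Delta_{\phi}f=-\sum\mu_{k}c_{k}^{l}Y_{k}^{l}$, and $m_{1}\p_{r}f=\sum q_{k}\,r^{1-d}Y_{k}^{l}$, $m_{1}\p_{r}(\nabla_{\phi}f)=\sum q_{k}\,r^{1-d}\nabla_{\phi}Y_{k}^{l}$, where $q_{k}:=r^{d-1}m_{1}(c_{k}^{l})'$ is the modal radial flux. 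Thus everything reduces to pointwise bounds on $c_{k}^{l}$, $q_{k}$, $q_{k}'$ plus enough decay in $k$ to sum the series uniformly on $\overline{\Omd}$.

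First I would record the elementary modewise structure. The ODE says exactly that $q_{k}$ is absolutely continuous on $[\tfrac12,1]$ with $q_{k}(\tfrac12)=0$ and $q_{k}'=\mu_{k}r^{d-3}m_{2}c_{k}^{l}$ a.e.; in particular $q_{k}\in W^{1,\infty}(\tfrac12,1)$ once $c_{k}^{l}$ is bounded, and the pointwise inequality $|q_{k}'(r)|\le C\mu_{k}|m_{2}(r)|\,\|c_{k}^{l}\|_{\infty}$ is the origin of the factor $|m_{2}(x)|$ in the third line of \eqnref{Lemma:harmonic:eeq}. Applying the maximum principle directly to the ODE — a zero of $(c_{k}^{l})'$ at an interior point, or at $r=\tfrac12$, together with $\mu_{k}>0$ and $m_{2}>0$, forces $q_{k}$ to be strictly monotone through that point and so precludes an extremum of the relevant sign — shows that $c_{k}^{l}$ is single–signed and monotone on $[\tfrac12,1]$, that $q_{k}$ is single–signed and monotone, that $\|c_{k}^{l}\|_{L^{\infty}(\frac12,1)}=|c_{k}^{l}(1)|=|\varphi_{k}^{l}|$, and that $|q_{k}(r)|\le\int_{1/2}^{1}\mu_{k}s^{d-3}m_{2}(s)\,|c_{k}^{l}(s)|\,ds\le C\mu_{k}\|m_{2}\|_{L^{1}(\frac12,1)}|\varphi_{k}^{l}|$, which accounts for the factor $1+\|m_{2}\|_{L^{1}(\frac12,1)}$ in the second line. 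Finally, since $M\equiv I_{d}$ on $B_{1}\setminus B_{1-\delta}$, one has $c_{k}^{l}(r)=a_{k}^{l}r^{k}+b_{k}^{l}r^{-(k+d-2)}$ there.

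The decisive step is geometric decay in $k$ of $|c_{k}^{l}(r)|$ and $|q_{k}(r)|$ on $\overline{\Omd}$. I would obtain it by comparing $c_{k}^{l}$ on $[\tfrac12,1-\delta]$ with the explicit solutions of the Euler–type equations $(r^{d-1}\Lambda\,\psi')'=\mu_{k}r^{d-3}\lambda\,\psi$ and $(r^{d-1}\lambda\,\psi')'=\mu_{k}r^{d-3}\Lambda\,\psi$ associated with the uniform ellipticity bounds $\lambda\le m_{1},m_{2}\le\Lambda$: the Neumann condition $q_{k}(\tfrac12)=0$ forces the mode to be exponentially flat near $r=\tfrac12$ and then to grow like $r^{\gamma_{k}}$ with $\gamma_{k}\asymp k$ on the way out to $r=1$, so that $c_{k}^{l}(r)\le C\,(1-\delta)^{\gamma_{k}}|\varphi_{k}^{l}|$ and, combined with the bound on $q_{k}$ above, $|q_{k}(r)|\le C\,\theta^{k}\bigl(1+\|m_{2}\|_{L^{1}(\frac12,1)}\bigr)|\varphi_{k}^{l}|$ for all $r\le 1-\delta$ and some $\theta\in(0,1)$. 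Feeding this into the Cauchy--Schwarz inequality against $\|\varphi\|_{H^{1/2}(\p B_{1})}^{2}\asymp\sum_{k,l}(1+k)|\varphi_{k}^{l}|^{2}$, using $\|Y_{k}^{l}\|_{\infty}+\|\nabla_{\phi}Y_{k}^{l}\|_{\infty}\lesssim k^{(d-2)/2}$ and the $O(k^{d-2})$ multiplicity of degree-$k$ harmonics, makes every series for $f,\nabla_{\phi}f,\Delta_{\phi}f,m_{1}\p_{r}f,m_{1}\p_{r}(\nabla_{\phi}f)$ and for the radial derivatives $\p_{r}(m_{1}\p_{r}f),\p_{r}(m_{1}\p_{r}\nabla_{\phi}f)$ converge uniformly on $\overline{\Omd}$; uniform convergence yields all the stated continuity and regularity assertions, while the tails, together with the modewise bound on $q_{k}'$, yield the three estimates of \eqnref{Lemma:harmonic:eeq}.

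I expect the main obstacle to be precisely this geometric decay: on $[\tfrac12,1-\delta]$ the modal ODE has variable and highly contrasted coefficients with no closed form, so the decay must be transported from $\p B_{1}$ inward by the comparison/Riccati argument sketched above rather than read off directly, and both the rate $\theta$ and the dependence on $m_{2}$ must be tracked explicitly since they ultimately control the admissible lamination scale $\varepsilon$ in Theorem~\ref{thm:main}. This is the point at which the argument follows, with the quantitative refinements above, the proof of \cite[Lemma~14]{Capdeboscq:2025:CCC}; the two–dimensional case is identical after replacing $Y_{k}^{l}$ by $\{\cos k\theta,\sin k\theta\}$, $\Delta_{\phi}$ by $\p_{\theta\theta}$, and $\mu_{k}$ by $k^{2}$.
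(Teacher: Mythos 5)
The paper never proves Lemma~\ref{lem:rad-reg}: it is imported verbatim (up to notation) from \cite[Lemma~14]{Capdeboscq:2025:CCC}, so the relevant test of your argument is whether it yields the estimates in the form in which they are used in Proposition~\ref{prop:homres}, namely with $C$ depending only on $d$ and $\delta$ and all dependence on the coefficients carried by the explicit factors $\left\Vert m_{2}\right\Vert_{L^{1}}$ and $|m_{2}(x)|$. Your modal skeleton is sound: the reduction to the Sturm--Liouville problems, the absolute continuity of the modal flux $q_{k}=r^{d-1}m_{1}(c_{k}^{l})'$ with $q_{k}(\tfrac12)=0$ and $q_{k}'=\mu_{k}r^{d-3}m_{2}c_{k}^{l}$, the maximum-principle facts ($c_{k}^{l}$ single-signed and monotone, $\|c_{k}^{l}\|_{\infty}=|\varphi_{k}^{l}|$), and the summation strategy are all correct and are the natural route.

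The gap is exactly at what you call the decisive step, the geometric decay in $k$. First, a comparison of $c_{k}^{l}$ with Euler-type solutions \emph{on} $[\tfrac12,1-\delta]$ can only relate interior values to $c_{k}^{l}(1-\delta)$; it cannot, by itself, produce the claimed bound $|c_{k}^{l}(r)|\le C\theta^{k}|\varphi_{k}^{l}|$, because the passage from $c_{k}^{l}(1)=\varphi_{k}^{l}$ down to $c_{k}^{l}(1-\delta)$ takes place in the annulus $B_{1}\setminus B_{1-\delta}$, which your comparison does not touch. Second, even if rearranged, a comparison anchored at the ellipticity bounds $\lambda\le m_{1},m_{2}\le\Lambda$ gives a growth exponent of order $k\sqrt{\lambda/\Lambda}$, so the rate $\theta$ and the constants depend on the contrast $\Lambda/\lambda$. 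That is fatal for the intended application: the lemma is applied to $A_{\varepsilon}$ and $F_{*}(\sigma\circ\Psi_{1/\rho})$, whose contrast degenerates as $\rho\to0$ (radial eigenvalues as small as $\alpha$, tangential ones as large as $\gamma_{\max}\sim\kappa|\ln\rho|$), while Proposition~\ref{prop:homres} and Theorem~\ref{thm:main} require a constant independent of $\rho$ and $\varepsilon$. The decay must instead be extracted from the harmonic buffer layer, which is precisely why the hypothesis $M=I_{d}$ on $B_{1}\setminus B_{1-\delta}$ is there and why the estimates are claimed only on $\overline{\Omd}$: on $(1-\delta,1)$ one has $c_{k}^{l}(r)=a r^{k}+b r^{-(k+d-2)}$ with $a+b=\varphi_{k}^{l}$, and combining this with the sign and monotonicity information you already established ($0\le c_{k}^{l}\le\varphi_{k}^{l}$ after normalizing the sign, $q_{k}\ge0$) forces $|b|\le C_{\delta}|a|(1-\delta)^{2k+d-2}$ and hence $|c_{k}^{l}(1-\delta)|\le C_{\delta}(1-\delta)^{k}|\varphi_{k}^{l}|$ with $C_{\delta}$ depending only on $\delta$ and $d$; monotonicity then transports this bound to all of $[\tfrac12,1-\delta]$, and the flux bound $|q_{k}(r)|\le C_{\delta}\mu_{k}(1-\delta)^{k}\|m_{2}\|_{L^{1}}|\varphi_{k}^{l}|$ on $\Omd$ follows since the integral defining $q_{k}$ only samples radii $\le 1-\delta$. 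With this replacement your summation argument goes through unchanged. One further point your verification should make explicit: the modal coefficient of $\partial_{r}(m_{1}\partial_{r}f)$ is $q_{k}'r^{1-d}-(d-1)q_{k}r^{-d}$, and the second term is controlled by $(1+\|m_{2}\|_{L^{1}})\|\varphi\|_{H^{1/2}}$ rather than by $|m_{2}(x)|$ pointwise, so the third line of \eqref{Lemma:harmonic:eeq} is only obtained (and, read literally, is only true) with this additional additive contribution; this is harmless for Proposition~\ref{prop:homres}, but your sketch, which attributes the whole third estimate to the bound on $q_{k}'$, overlooks it.
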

Note that both $F_*\sigma$ and $A_\varepsilon$ satisfy the assumptions on $M$ in Lemma~\ref{lem:rad-reg}. This allows us to apply Lemma~\ref{lem:rad-reg} and obtain the required estimates for $u$ and $u_\varepsilon$ in the following proposition. More precisely, we establish an error estimate between the energies of the solutions corresponding to the anisotropic material and its piecewise isotropic laminate approximation:
\begin{prop}\label{prop:homres}
Let $\varphi\in H^{\frac{1}{2}}\left(r=1\right)$. Let $u$ and $u_\varepsilon$ be the solutions to \eqnref{eq:def_u} and \eqnref{eq:def_u_ep}, respectively. 

There exists a constant $C>0$, independent of $\rho$, such that for all sufficiently small $\varepsilon>0$, 
\begin{align*}
 & \bigg|\int_{B_{1}\setminus B_{\frac{1}{2}}}F_{*}(\sigma\circ\Psi_{1/\rho})\,\nabla u\cdot\nabla u\,dy-\int_{B_{1}\setminus B_{\frac{1}{2}}}A_{\varepsilon}\nabla u_{\varepsilon}\cdot\nabla u_{\varepsilon}\, dy\bigg|\\
 \leq  &
 \begin{cases}
\ds 
C\varepsilon\kappa^{2}\lvert \ln \rho \rvert^2\left(\frac{1}{\alpha}+\gamma_{\max}\right)\left\Vert \varphi\right\Vert ^{2}{}_{H^{\frac{1}{2}}\left(\partial B_{1}\right)}
\quad&\mbox{for }d=2,\\[2mm]
\ds C\varepsilon\kappa^{2}\left(\frac{1}{\alpha} a_{\rho,\varepsilon}^2+\gamma_{\max} a_{\rho,\varepsilon}b_{\rho,\varepsilon}+a_{\rho,\varepsilon}\kappa \frac{\lvert \ln \rho \rvert}{\rho }\right)\left\Vert \varphi\right\Vert ^{2}_{H^{\frac{1}{2}}\left(\partial B_{1}\right)}\quad &\mbox{for }d=3.
 \end{cases}
\end{align*}
Here $\alpha$ and $\gamma$ are as in \eqnref{eq:system-parameters} (see also \eqnref{eq:parameter_conditions1} and \eqnref{eq:parameter_conditions12}), and $\kappa,\, a_{\rho,\varepsilon},\,b_{\rho,\varepsilon}$ are given in \eqnref{def:kappa} and \eqnref{notation_ab}.
\end{prop}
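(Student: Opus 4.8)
The plan is to start from the identity \eqnref{energy:differ}, which already reduces the quantity to be bounded to $\int_{\Omega_\delta}\bigl(F_*(\sigma\circ\Psi_{1/\rho})-A_\varepsilon\bigr)\nabla u_\varepsilon\cdot\nabla u\,dy$, and then to view this integral as a pairing of the difference $F_*(\sigma\circ\Psi_{1/\rho})-A_\varepsilon$ against a slowly varying (though $\varepsilon$-dependent) function assembled from $u$ and $u_\varepsilon$, so that Lemma~\ref{lem:weak_conv} applies. Using the radial/tangential form $F_*(\sigma\circ\Psi_{1/\rho})=\sigma^*_1\,\hat{y}\otimes\hat{y}+\sigma^*_2\,(I_d-\hat{y}\otimes\hat{y})$ from \eqnref{eq:cloak}--\eqnref{def:sigma_12} and the fact that $A_\varepsilon$ is a scalar, the integrand splits as $(\sigma^*_1-A_\varepsilon)\,\partial_r u_\varepsilon\,\partial_r u+(\sigma^*_2-A_\varepsilon)\,\nabla_\phi u_\varepsilon\cdot\nabla_\phi u$, and I estimate the two pieces separately. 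The key structural input is the radial elliptic regularity of Lemma~\ref{lem:rad-reg}, applied to $u$ with $(m_1,m_2)=(\sigma^*_1,\sigma^*_2)$ and to $u_\varepsilon$ with $(m_1,m_2)=(A_\varepsilon,A_\varepsilon)$; both configurations meet that lemma's hypotheses, and on $B_1\setminus B_{1-\delta}$ (where $F_*(\sigma\circ\Psi_{1/\rho})=A_\varepsilon=I_d$) both split integrands vanish, so replacing $\Omega_\delta$ by $\Omega=B_1\setminus\overline{B}_{1/2}$ in order to invoke Lemma~\ref{lem:weak_conv} is harmless.

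For the tangential piece I take the test function $f:=\nabla_\phi u_\varepsilon\cdot\nabla_\phi u$ and apply \eqnref{eq:weak_lim_1} (to $\sigma^*_2-A_\varepsilon$, up to sign). By Lemma~\ref{lem:rad-reg}, $\|\nabla_\phi u\|_{L^\infty},\|\nabla_\phi u_\varepsilon\|_{L^\infty}\le C\|\varphi\|_{H^{1/2}}$, while $A_\varepsilon\,\partial_r(\nabla_\phi u_\varepsilon)$ and $\sigma^*_1\,\partial_r(\nabla_\phi u)$ lie in $W^{1,\infty}(\Omega_\delta)$ with $L^\infty$-norm $\le C(1+\|m_2\|_{L^1(1/2,1)})\|\varphi\|_{H^{1/2}}$; dividing by $A_\varepsilon$ resp.\ $\sigma^*_1$ and using $A_\varepsilon\ge\alpha$ together with $\sigma^*_1\ge\alpha$ (valid since $\alpha<1$ and, by \eqnref{eq:alpha_cond}, $\alpha<\sigma^*_1(s)$ wherever $\sigma^*_1(s)<1$) gives $\|\partial_r f\|_{L^\infty}\le \tfrac{C}{\alpha}\bigl(1+\|m_2\|_{L^1}\bigr)\|\varphi\|_{H^{1/2}}^2$ and $\|f\|_{L^\infty}\le C\|\varphi\|_{H^{1/2}}^2$. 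Inserting the $L^1$-bounds \eqnref{eq:L1_a_d=2} for $d=2$ (with the analogue $\|\sigma^*_2\|_{L^1}\le C\kappa|\ln\rho|$) and \eqnref{eq:L1_a_d=3} for $d=3$ (with $\|\sigma^*_2\|_{L^1}\le C\kappa$) into \eqnref{eq:weak_lim_1} produces the $\tfrac1\alpha$-term of the claimed bound --- namely $\kappa^2|\ln\rho|^2\tfrac1\alpha$ for $d=2$ and $\kappa^2 a_{\rho,\varepsilon}^2\tfrac1\alpha$ for $d=3$ --- plus contributions that are manifestly of lower order.

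For the radial piece I use the pointwise identity $(\sigma^*_1-A_\varepsilon)\,\partial_r u_\varepsilon\,\partial_r u=\bigl(\tfrac1{A_\varepsilon}-\tfrac1{\sigma^*_1}\bigr)\,(A_\varepsilon\,\partial_r u_\varepsilon)\,(\sigma^*_1\,\partial_r u)$ and apply \eqnref{eq:weak_lim_2} with $g:=(A_\varepsilon\,\partial_r u_\varepsilon)(\sigma^*_1\,\partial_r u)$. Again by Lemma~\ref{lem:rad-reg}, both factors belong to $W^{1,\infty}(\Omega_\delta)$ with $L^\infty$-norm $\le C(1+\|m_2\|_{L^1})\|\varphi\|_{H^{1/2}}$, and crucially their radial derivatives are controlled \emph{pointwise} by the last estimate in \eqnref{Lemma:harmonic:eeq}: $|\partial_r(A_\varepsilon\partial_r u_\varepsilon)(x)|\le C\,A_\varepsilon(x)\,\|\varphi\|_{H^{1/2}}\le C\gamma_{\max}\|\varphi\|_{H^{1/2}}$ and $|\partial_r(\sigma^*_1\partial_r u)(x)|\le C\,|\sigma^*_2(x)|\,\|\varphi\|_{H^{1/2}}\le C\kappa|\ln\rho|\,\|\varphi\|_{H^{1/2}}$ (the latter since $\lambda$ resp.\ $(g^{-1})'$ is $O(|\ln\rho|)$, cf.\ \eqnref{eq:est_lambda}, \eqnref{lambda_est_3d}). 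Thus $\|g\|_{L^\infty}$ and $\|\partial_r g\|_{L^\infty}$ are bounded by polynomials in $\kappa,|\ln\rho|,\gamma_{\max},a_{\rho,\varepsilon}$; feeding these into \eqnref{eq:weak_lim_2} yields the $\gamma_{\max}$-term ($\gamma_{\max}\kappa^2|\ln\rho|^2$ for $d=2$, $\gamma_{\max}a_{\rho,\varepsilon}b_{\rho,\varepsilon}\kappa^2$ for $d=3$) and, for $d=3$, the $a_{\rho,\varepsilon}\kappa^3|\ln\rho|/\rho$-term arising from the $\tfrac{|\ln\rho|}{\rho}\|g\|_{L^\infty}$ contribution. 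Summing the tangential and radial contributions and absorbing lower-order pieces via $1\le\gamma_{\max}$, $\kappa|\ln\rho|\le\gamma_{\max}$ (from \eqnref{eq:boundgamma}), $\kappa|\ln\rho|\le C/\alpha$ (from \eqnref{eq:boundalpha}) and $1\le a_{\rho,\varepsilon}\le b_{\rho,\varepsilon}$ gives the stated estimate.

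I expect the main difficulty to be the bookkeeping rather than any single estimate: one must verify that $f$ and $g$, although they depend on $\varepsilon$, are legitimate inputs to Lemma~\ref{lem:weak_conv} with sup-norm and radial Lipschitz constant growing only polynomially in $\kappa$, $|\ln\rho|$, $1/\alpha$, $\gamma_{\max}$, $a_{\rho,\varepsilon}$, $b_{\rho,\varepsilon}$, so that the extra factor $\varepsilon$ supplied by \eqnref{eq:weak_lim_1}--\eqnref{eq:weak_lim_2} still wins; it is precisely the pointwise nature of the third estimate in \eqnref{Lemma:harmonic:eeq} that keeps the leading coefficient at $\gamma_{\max}$ (and not at a worse $\|A_\varepsilon\|_{L^1}$-type quantity), and in three dimensions one must take care to feed $\|A_\varepsilon\|_{L^1}\lesssim\kappa a_{\rho,\varepsilon}$ into \eqnref{eq:weak_lim_1} but $\|1/A_\varepsilon\|_{L^1}\lesssim\kappa b_{\rho,\varepsilon}$ into \eqnref{eq:weak_lim_2}.
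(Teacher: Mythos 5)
Your proposal is correct and follows essentially the same route as the paper: the same energy identity \eqref{energy:differ}, the same radial/tangential decomposition (your radial rewriting $(\sigma^*_1-A_\varepsilon)\,\partial_r u_\varepsilon\,\partial_r u=(\tfrac{1}{A_\varepsilon}-\tfrac{1}{\sigma^*_1})(A_\varepsilon\partial_r u_\varepsilon)(\sigma^*_1\partial_r u)$ is exactly the paper's $E_r$), and the same application of Lemma~\ref{lem:weak_conv} to $f=\nabla_\phi u_\varepsilon\cdot\nabla_\phi u$ and $g=(A_\varepsilon\partial_r u_\varepsilon)(\sigma^*_1\partial_r u)$ with the $L^\infty$ and pointwise radial-derivative bounds of Lemma~\ref{lem:rad-reg}. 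The bookkeeping of the $d=2$ and $d=3$ coefficients, including the $a_{\rho,\varepsilon}\kappa|\ln\rho|/\rho$ term, matches the paper's estimates.
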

\begin{proof}

By \eqnref{energy:differ}, we obtain
\begin{align}\label{int:Er:Et}
\int_{B_{1}\setminus B_{\frac{1}{2}}}F_{*}(\sigma\circ\Psi_{1/\rho})\,\nabla u\cdot\nabla u\, dy-\int_{B_{1}\setminus B_{\frac{1}{2}}}A_{\varepsilon}\nabla u_{\varepsilon}\cdot\nabla u_{\varepsilon}\, dy
=E_{r}+E_{t},
\end{align}
where $E_{r}$ is the integral with the radial components of $\nabla u_\varepsilon$ and $\nabla u$, and $E_{t}$ with the tangential components, that is,
\beq\label{eq:defABpfhom-1}
\begin{aligned}
E_{r} & :=\int_{\Omd}\left(\frac{1}{A_{\varepsilon}}-\frac{1}{\sigma^*_1}\right)A_{\varepsilon}\,\partial_{r}u_{\varepsilon}\,\sigma^*_1\, \partial_{r}u\, dy,\\
E_{t} & :=\int_{\Omd}\left(\sigma^*_2-A_{\varepsilon}\right)\nabla_{\phi}u_{\varepsilon}\cdot\nabla_{\phi}u\, dy.
\end{aligned}
\eeq
We estimate $E_{r}$ and $E_{t}$ by applying Lemma \ref{lem:weak_conv}, which requires bounds on $\left\Vert \partial_{r}f\right\Vert_{L^{\infty}\left(\Omega_{\delta}\right)}$ and $\left\Vert f\right\Vert _{L^{\infty}\left(\Omega_{\delta}\right)}$, with $$f=\nabla_{\phi}u_{\varepsilon}\cdot\nabla_{\phi}u,\qquad f=A_{\varepsilon}\,\partial_{r}u_{\varepsilon}\,\sigma^*_1\, \partial_{r}u.$$

First, by Lemma \ref{lem:rad-reg}, we easily obtain 
\begin{align*}
\left\Vert \nabla_{\phi}u_{\varepsilon}\cdot\nabla_{\phi}u\right\Vert _{L^{\infty}\left(\Omega_{\delta}\right)} & \le C\left\Vert \varphi\right\Vert_{H^{\frac{1}{2}}\left(\partial B_{1}\right)}.
\end{align*}
By applying Lemma \ref{lem:rad-reg} to these functions (see \eqnref{Lemma:harmonic:eeq}) together with (\ref{eq:est_lambda}), \eqnref{eq:lambda_sigma}, \eqnref{eq:L1_a_d=2}, \eqnref{lambda_est_3d}, \eqnref{eq:L1_a_d=3}, and \eqnref{est_siglam_d=3}, one obtains
\begin{align*}
 & \left\Vert \partial_{r}\left(\nabla_{\phi}u_{\varepsilon}\cdot\nabla_{\phi}u\right)\right\Vert _{L^{\infty}\left(\Omd\right)}\\
\leq\,  & C\bigg(\Big\Vert \frac{1}{A_{\varepsilon}}\Big\Vert _{L^{\infty}\left(\Omd\right)}\left(1+\left\Vert A_{\varepsilon}\right\Vert _{L^{1}\left(\Omd\right)}\right)+\Big\Vert \frac{1}{\sigma_1^*}\Big\Vert _{L^{\infty}\left(\Omd\right)}\left(1+\left\Vert \sigma_2^*\right\Vert _{L^{1}\left(\Omd\right)}\right)\bigg)\left\Vert \varphi\right\Vert ^{2}_{H^{\frac{1}{2}}\left(\partial B_{1}\right)}
\end{align*}
and
\begin{align*}
\left\Vert\partial_{r}\left(\nabla_{\phi}u_{\varepsilon}\cdot\nabla_{\phi}u\right)\right\Vert _{L^{\infty}\left(\Omd\right)}
 & \le\begin{cases}
\ds C\,\frac{1}{\alpha}\,\kappa\left|\ln\rho\right|\left\Vert \varphi\right\Vert ^{2}{}_{H^{\frac{1}{2}}\left(\partial B_{1}\right)}\quad & \text{for }d=2,\\[3mm]
\ds C\,\frac{1}{\alpha}\,\kappa\left(1+\varepsilon\left|\ln\rho\right|\right)\left\Vert \varphi\right\Vert ^{2}{}_{H^{\frac{1}{2}}\left(\partial B_{1}\right)} \quad & \text{for }d=3.
\end{cases}
\end{align*}

Similarly, one obtains
\begin{align*}
\left\Vert A_{\varepsilon}\,\partial_{r}u_{\varepsilon}\,\sigma^*_1\, \partial_{r}u\right\Vert _{L^{\infty}\left(\Omega_{\delta}\right)} & \le C\left\Vert A_{\varepsilon}\right\Vert _{L^{1}\left(\Omega\right)}\,\left\Vert \sigma^*_1 \right\Vert _{L^{1}\left(\Omega\right)}\,\left\Vert \varphi\right\Vert ^{2}_{H^{\frac{1}{2}}\left(\partial B_{1}\right)}\\
 & \leq\begin{cases}
\ds C\,\kappa^{2}\left|\ln\rho\right|^{2}\left\Vert \varphi\right\Vert^{2}_{H^{\frac{1}{2}}\left(\partial B_{1}\right)} \quad& \mbox{for }d=2,\\[2mm]
\ds C\,\kappa^{2}\left(1+\varepsilon\left|\ln\rho\right|\right)\left\Vert \varphi\right\Vert^{2}_{H^{\frac{1}{2}}\left(\partial B_{1}\right)} \quad &\mbox{for } d=3.
\end{cases}
\end{align*}
Recall that in {\bf Case 2} of Section \ref{subsec:homogenization}, it may be necessary to use more than one highly conducting materials $\gamma$ satisfying \eqnref{eq:parameter_conditions2}. We then define $\gamma_{\max}$ to be the largest conductivity among the values of $\gamma$ determined by \eqnref{eq:parameter_conditions2}.

We again apply Lemma \ref{lem:rad-reg} to the right-hand side of the product rule
\begin{align*}
&\left\Vert \partial_{r}\left(A_{\varepsilon}\partial_{r}u_{\varepsilon}\, \sigma^*_1\,\partial_{r}u\right)\right\Vert_{L^{\infty}\left(\Omega_{\delta}\right)} \\
\leq & \left\Vert \sigma^*_1\partial_{r}u\right\Vert _{L^{\infty}\left(\Omega_{\delta}\right)}\left\Vert \partial_{r}\left(A_{\varepsilon}\partial_{r}u_{\varepsilon}\right)\right\Vert_{L^{\infty}\left(\Omega_{\delta}\right)}
+\left\Vert A_{\varepsilon}\partial_{r}u_{\varepsilon}\right\Vert _{L^{\infty}\left(\Omega_{\delta}\right)}\left\Vert \partial_{r}\left(\sigma^*_1\partial_{r}u\right)\right\Vert _{L^{\infty}\left(\Omega_{\delta}\right)}
\end{align*}
and derive
\begin{align*}
\left\Vert \partial_{r}\left(A_{\varepsilon}\partial_{r}u_{\varepsilon}\, \sigma^*_1\,\partial_{r}u\right)\right\Vert_{L^{\infty}\left(\Omega_{\delta}\right)} 
\leq &
\begin{cases}
\ds C\gamma_{\max}\,\kappa\,\left|\ln\rho\right|\left\Vert \varphi\right\Vert ^{2}_{H^{\frac{1}{2}}\left(\partial B_{1}\right)} \quad & \mbox{for }d=2,\\[2mm]
\ds C\gamma_{\max}\,\kappa\,\left(1+\varepsilon\lvert \ln \rho \rvert \right)\left\Vert \varphi\right\Vert ^{2}_{H^{\frac{1}{2}}\left(\partial B_{1}\right)} \quad &\mbox{for } d=3.
\end{cases}
\end{align*}

Finally, by applying Lemma \ref{lem:weak_conv} to \eqnref{eq:defABpfhom-1} with the estimates derived above, we obtain 
\begin{equation}\label{estimate:Er+Et}
 \begin{aligned}
E_{r}+E_{t}  \le 
\begin{cases}
\ds 
C\varepsilon\kappa^{2}\lvert \ln \rho \rvert^2\left(\frac{1}{\alpha}+\gamma_{\max}\right)\left\Vert \varphi\right\Vert ^{2}{}_{H^{\frac{1}{2}}\left(\partial B_{1}\right)}
\quad&\mbox{for }d=2,\\
\ds C\varepsilon\kappa^{2}\left(\frac{1}{\alpha} a_{\rho,\varepsilon}^2+\gamma_{\max} a_{\rho,\varepsilon}b_{\rho,\varepsilon}+a_{\rho,\varepsilon}\kappa \frac{\lvert \ln \rho \rvert}{\rho }\right)\left\Vert \varphi\right\Vert ^{2}_{H^{\frac{1}{2}}\left(\partial B_{1}\right)}\quad &\mbox{for }d=3.
 \end{cases}
\end{aligned}   
\end{equation}

This completes the proof by \eqnref{int:Er:Et}.
\end{proof}

We are now ready to prove Theorem \ref{thm:main}.
\begin{proof}[\foreignlanguage{american}{Proof of Theorem\foreignlanguage{british}{ \ref{thm:main}} }]
\label{proof:thm1} Let $\rho_{EC}$ be the radius of the small inclusion
in the enhanced cloaking scheme with GPT-vanishing structure of order $N$. From Theorem
\ref{thm:cloaking_gpt}, we have
\[
\left\lVert\Lambda_{B_{1},B_{\frac{1}{2}}}\left[
F_{*}(\sigma\circ\Psi_{1/\rho})\right]-\Lambda_{B_{1},\emptyset}\left[1\right]\right\rVert_{\mathcal{L}\left(H^{\frac{1}{2}}\left(\partial B_{1}\right),H^{-\frac{1}{2}}\left(\partial B_{1}\right)\right)}\leq C\rho_{EC}^{d+2N}
\]
and, by Proposition \ref{prop:homres},
 \begin{equation*}
 \begin{aligned}
&\left\lVert\Lambda_{B_{1},B_{\frac{1}{2}}}\left[A_\varepsilon
\right]-  \Lambda_{B_{1},B_{\frac{1}{2}}}\left[F_{*}(\sigma\circ\Psi_{1/\rho})\right]\right\rVert_{\mathcal{L}\left(H^{\frac{1}{2}}\left(\partial B_{1}\right),H^{-\frac{1}{2}}\left(\partial B_{1}\right)\right)} \\ 
&\le 
\begin{cases}
\ds 
C\varepsilon\kappa^{2}\lvert \ln \rho_{EC}\rvert^2\left(\frac{1}{\alpha}+\gamma_{\max}\right)
\quad&\mbox{for }d=2,\\
\ds C\varepsilon\kappa^{2}\left(\frac{1}{\alpha} a_{\rho_{EC},\varepsilon}^2+\gamma_{\max} a_{\rho_{EC},\varepsilon}b_{\rho_{EC},\varepsilon}+a_{\rho_{EC},\varepsilon}\kappa \lvert \ln \rho_{EC} \rvert \rho_{EC}^{-1}\right)\quad &\mbox{for }d=3.
 \end{cases}
\end{aligned}   
\end{equation*}
To match the level of invisibility $\rho^d$ of the near-cloak obtained by the regular blow-up transformation without the GPT-vanishing coating,
we can set
\begin{equation}\label{rho_enh}
  \rho_{EC}=\rho^{\frac{d}{d+2N}}.  
\end{equation}

It is worth emphasizing that $$\rho_{EC}\gg \rho.$$

Taking conditions \eqnref{eq:boundalpha}  and \eqnref{eq:boundgamma}, we deduce that $\alpha=O\left(\kappa|\ln \rho|^{-1} \rho^{\frac{d}{d+2N}(d-2)}\right)$ and $\gamma_{\max}=O\left(\kappa |\ln \rho|\right)$. 
Set
\begin{equation}\label{epsilon_level}
\varepsilon=\begin{cases}
O\left(\rho^2\left|\ln\rho\right|^{-3}\kappa^{-3}\right) & \text{when }d=2\\
O\left(\rho^{3+\frac{3}{2N+3}}\left|\ln\rho\right|^{-1}\kappa^{-1}\right) & \text{when }d=3
\end{cases}
\end{equation}

 one has
\[
\lVert\Lambda_{B_{1},B_{\frac{1}{2}}}\left[A_{\varepsilon}\right]-\Lambda_{B_{1},B_{\frac{1}{2}}}\left[F_{*}\sigma\right]\rVert_{\mathcal{L}\left(H^{\frac{1}{2}}\left(\partial B_{1}\right),H^{-\frac{1}{2}}\left(\partial B_{1}\right)\right)}\leq C\rho^{d}.
\]
The conclusion follows from the triangle inequality.
\end{proof}

\section{A remark on the case with arbitrary inclusion}\label{sec:arbitrary_inclusion}

Theorem \ref{thm:main} establishes a cloaking scheme based on
GPT-vanishing coating structures, specifically designed to conceal the
perfectly insulated ball $B_{\frac{1}{2}}$. We note, however, that this
result only applies when the conductivity in the “hidden” region is zero. A complementary strategy was developed by H. Heumann and M.S. Vogelius \cite{Heumann:2014:AEA} and extended this cloaking approach to a more general
transmission setting, thereby enabling the cloaking of arbitrary objects within $B_{\frac{1}{4}}$. Their construction introduces a thin layer of material with very low conductivity in the annular region $B_{\rho_{}}\setminus B_{\frac{\rho}{2}}$ (in the virtual
domain).

In this section, we aim to construct
near-cloak structures analogous to those in \cite{Heumann:2014:AEA}, but based on the homogenization techniques developed in
Section \ref{sec:laminate}. For simplicity, we restrict the discussion to the two-dimensional case ($d=2$).

\subsection{Review of cloaking arbitrary an inclusion}
Set $N\in\NN$. Let $\sigma_N$, which corresponds to $\sigma\circ\Psi_{1/\rho}$, denote the multi-coating having an insulating core in the virtual domain (i.e., before applying the push-forward mapping $F_*$) with GPT-vanishing properties up to order $N$, constructed in Section \ref{sec:GPT}. 
Let $\zeta>0$ be a small parameter, and define the conductivity distribution $\sigma_{\zeta}$ by
\beq\label{def:sigma_zeta}
\sigma_{\zeta}:=\begin{cases}
\sigma_N\quad& \text{in }B_{1}\setminus B_{\rho^{}},\\
\zeta \quad & \text{in }B_{\rho^{}}\setminus B_{\frac{\rho}{2}}\\
\widetilde{B} \quad& \text{in }B_{\frac{\rho}{2}},
\end{cases},
\eeq
where $\widetilde{B}\in L^{\infty}\left(B_{\frac{\rho}{2}}\right)$
is a symmetric, positive definite matrix. Recall the  radial transformation
$F$ introduced in (\ref{eq:diffeo_radial}). The resulting  the near-cloak imposed in $B_{1}\setminus B_{\frac{1}{4}}$ (in the physical domain) is given by $F_{*}\sigma_{\zeta}$. 
The anisotropy measure given in \eqnref{def:chi_max} becomes
\[
\chi_{\text{max}}=\max_{x\in\bar{B}_{1}\setminus B_{\frac{1}{4}}}\frac{\tau_{2}}{\tau_{1}}(x),
\]
where for each $x\in\bar{B}_{1}\setminus B_{\frac{1}{4}}$, $\tau_{1}(x):=\frac{\sigma_{\zeta}}{\lambda\left(\left|x\right|\right)}$
and $\tau_{2}(x):=\sigma_{\zeta}\lambda\left(\left|x\right|\right)$,
are the eigenvalues of $F_{*}\left[\sigma_{\zeta}\right]$ with 
\begin{align*}
\lambda(s)=\begin{cases}
\ds1, & s<\frac{1}{2},\\
\ds\frac{1}{\alpha(\rho)}, & \frac{1}{2}<s<\frac{3}{4},\\
\ds1, & \frac{3}{4}<s<1.
\end{cases}
\end{align*}
With respect to $\chi_{\text{max}}$, $F_{*}\left[\sigma_{\zeta}\right]$
configuration has the same level of anisotropy with the near-cloak $F_*\sigma$ in
\eqnref{eq:cloak}. However, this comes at the cost of 
\[
\max_{x\in\bar{B}_{1}\setminus B_{\frac{1}{4}}}\frac{1}{\tau_{1}}=\rho_{EC}^{-\left(2N+2\right)}.
\]

The following cloaking result from \cite{Heumann:2014:AEA} states that the near-cloak $F_{*}\sigma_{\zeta}$, imposed in $B_{1}\setminus B_{\frac{1}{4}}$ (in the physical domain), conceals the arbitrary conducting material $$B:=F_{*}\widetilde{B}$$ (also symmetric and positive definite)
placed in the cloaking region $B_{\frac{1}{4}}$. 

\begin{theorem}[\cite{Heumann:2014:AEA}]\label{thm:HV:estimates}
Set $N\in\NN$ and $\sigma_\zeta$ be given by \eqnref{def:sigma_zeta} with arbitrary symmetric, positive definite matrix $\widetilde{B}$ in the core. Set $\zeta=\rho^{2N+2}$. Given $\varphi\in H^{\frac{1}{2}}(\p B_1)$, let $u_{\zeta}$ and $U$ be the solutions to 
\[
\begin{cases}
\nabla\cdot\left(F_{*}\sigma_{\zeta}\,\nabla u_{\zeta}\right)=0 \quad& \text{in }B_{1},\\
u_{\zeta}=\varphi \quad & \text{on }\partial B_{1}
\end{cases}
\]
and
\[
\begin{cases}
\Delta U=0 \quad& \text{in }B_{1},\\
U=\varphi \quad& \text{on }\partial B_{1},
\end{cases}
\]
respectively. We denote by $\Lambda_{B_1}[F_{*}\sigma_{\zeta}]$ and $\Lambda_{B_1}[1]$ the DtN map associated with these problems. 
Then, it holds that
\[
\left\Vert \Lambda_{B_1}[F_{*}\sigma_{\zeta}]-\Lambda_{B_1}[1]\right\Vert _{\mathcal{L}\left(H^{\frac{1}{2}}\left(\partial B_{1}\right),H^{-\frac{1}{2}}\left(\partial B_{1}\right)\right)}\le C\rho^{2N+2}.
\]
\end{theorem}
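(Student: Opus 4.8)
The plan is to reduce the statement, via the change-of-variables invariance of the Dirichlet-to-Neumann map, to a comparison in the virtual domain between the regularized conductivity $\sigma_\zeta$ and the GPT-vanishing structure $\sigma_N$ with a \emph{perfectly insulating} core, for which Theorem~\ref{thm:cloaking_gpt} already supplies the rate $\rho^{2N+2}$ when $d=2$; the only new ingredient is a quantitative decoupling estimate showing that the thin shell $B_\rho\setminus B_{\rho/2}$ of conductivity $\zeta$ renders the arbitrary core $\widetilde B$ almost invisible, up to an error $O(\zeta)$.

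Concretely, since $F$ is a bi-Lipschitz, piecewise-$C^1$ radial map with $F=\mathrm{id}$ on $\partial B_1$, the change of variables for the DtN map (cf.\ \eqref{DtN:trans} and \cite{Kohn:2008:CCV}) gives $\Lambda_{B_1}[F_*\sigma_\zeta]=\Lambda_{B_1}[\sigma_\zeta]$, and likewise $\Lambda_{B_1,B_{1/2}}[F_*\sigma_N]=\Lambda_{B_1,B_\rho}[\sigma_N]$, so Theorem~\ref{thm:cloaking_gpt} yields $\|\Lambda_{B_1,B_\rho}[\sigma_N]-\Lambda_{B_1}[1]\|\le C\rho^{2N+2}$. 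It then remains to bound $\|\Lambda_{B_1}[\sigma_\zeta]-\Lambda_{B_1,B_\rho}[\sigma_N]\|$, and I would do this with the Dirichlet-principle characterizations of both maps. For the lower bound, $\sigma_\zeta$ dominates, as a quadratic form, the conductivity equal to $\sigma_N$ on $B_1\setminus B_\rho$ and to $0$ on $B_\rho$; minimizing $\int_{B_1}\sigma_\zeta\nabla v\cdot\nabla v$ over $v|_{\partial B_1}=\varphi$ then gives $\langle\Lambda_{B_1}[\sigma_\zeta]\varphi,\varphi\rangle\ge\langle\Lambda_{B_1,B_\rho}[\sigma_N]\varphi,\varphi\rangle$. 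For the upper bound, I would use as competitor the function $\tilde u_0$ that equals the insulating-core solution $u_0$ on $B_1\setminus B_\rho$, is harmonic on the shell $B_\rho\setminus B_{\rho/2}$ with trace $u_0|_{\partial B_\rho}$ on $\partial B_\rho$ and a constant $c$ on $\partial B_{\rho/2}$, and equals $c$ on $B_{\rho/2}$; then $\nabla\tilde u_0=0$ on $B_{\rho/2}$, the $\widetilde B$-term vanishes, and $\langle\Lambda_{B_1}[\sigma_\zeta]\varphi,\varphi\rangle\le\langle\Lambda_{B_1,B_\rho}[\sigma_N]\varphi,\varphi\rangle+\zeta\int_{B_\rho\setminus B_{\rho/2}}|\nabla\tilde u_0|^2$. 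Thus the difference of the two self-adjoint maps is a nonnegative quadratic form bounded by $\zeta\int_{B_\rho\setminus B_{\rho/2}}|\nabla\tilde u_0|^2$, Cauchy--Schwarz for that form converts the bound into an operator-norm estimate, and the triangle inequality with $\zeta=\rho^{2N+2}$ completes the argument.

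The crux is to show $\int_{B_\rho\setminus B_{\rho/2}}|\nabla\tilde u_0|^2\le C\|\varphi\|_{H^{1/2}(\partial B_1)}^2$ with $C$ independent of $\rho$ and of $\widetilde B$: rescaling $x=\rho y$ maps the shell onto the fixed annulus $B_1\setminus B_{1/2}$, and because the Dirichlet integral is conformally invariant in dimension two the rescaled energy is controlled at unit scale by a trace estimate for $u_0$ on $B_{2\rho}\setminus B_\rho$ followed by a harmonic-extension bound $-$ this conformal invariance is precisely where the restriction $d=2$ enters (in $d=3$ an extra, harmless factor $\rho$ appears in this direction). The only delicate point is uniformity of these unit-scale estimates: one applies them on $B_{2\rho}\setminus B_\rho$, where $\sigma_N$ is either the constant $1$ or a fixed finite stack of layers, and combines them with the standard energy bound $\|\nabla u_0\|_{L^2(B_1\setminus B_\rho)}\le C\|\varphi\|_{H^{1/2}(\partial B_1)}$, all with constants independent of $\rho$.
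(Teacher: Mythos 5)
The paper itself offers no proof of this statement: it is imported verbatim from Heumann--Vogelius \cite{Heumann:2014:AEA}, so there is nothing internal to compare against, and your proposal should be judged as a stand-alone argument. As such it is correct and close in spirit to the original: after using invariance of the DtN map under the boundary-fixing transformation $F$, you sandwich the quadratic form of $\Lambda_{B_1}[\sigma_\zeta]$ between that of the insulating-core problem $\Lambda_{B_1,B_\rho}[\sigma_N]$ (from below, by dropping the nonnegative contributions of $\zeta$ and $\widetilde B$ in the Dirichlet principle) and the same form plus $\zeta\int_{B_\rho\setminus B_{\rho/2}}|\nabla\tilde u_0|^2$ (from above, via a competitor that is constant on $B_{\rho/2}$, which is exactly what makes the bound uniform in $\widetilde B$); Theorem \ref{thm:cloaking_gpt} with $d=2$ then gives the rate $\rho^{2N+2}$ for the insulating-core structure, and the triangle inequality with $\zeta=\rho^{2N+2}$ finishes. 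Two details you should make explicit. First, the constant $c$ on $\partial B_{\rho/2}$ must be chosen appropriately (e.g.\ the mean of $u_0$ over $\partial B_\rho$), since otherwise the shell energy is not controlled by the $H^{1/2}$ seminorm of the trace; note also that the competitor need not be harmonic --- any extension of $u_0|_{\partial B_\rho}-c$ with controlled energy that vanishes near $\partial B_{\rho/2}$ suffices, which slightly simplifies the estimate. Second, upgrading the bound on the quadratic form of the difference to an operator-norm bound requires the difference to be a symmetric bilinear form; both DtN forms are symmetric and your lower bound gives nonnegativity, so the generalized Cauchy--Schwarz inequality applies, or one can instead invoke the polarization identity already used at the end of Section \ref{sec:BVP}, which needs no sign information. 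Your chain of scale-invariant estimates in $d=2$ (trace on $B_{2\rho}\setminus B_\rho$, extension at unit scale, energy bound for $u_0$ with constants depending only on the fixed coating conductivities) does yield a constant independent of $\rho$ and of $\widetilde B$, which is the essential point for cloaking arbitrary cores.
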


\subsection{Near-cloaking laminate structure for arbitrary objects} 

From the definition of diffeomorphism $F$ in \eqnref{eq:diffeo_radial}, $F_{*}\left[\sigma_{\zeta}\right]=\zeta$ in  $B_{\frac{1}{2}}\setminus B_{\frac{1}{4}}$, thus isotropic in that region. Define 
\[
A_{\varepsilon}^{\zeta}=\begin{cases}
B&\text{ in } B_{\frac{1}{4}},\\
\zeta & \text{in }B_{\frac{1}{2}}\setminus B_{\frac{1}{4}},\\
A_{\varepsilon} & \text{in }B_{1}\setminus B_{\frac{1}{2}},
\end{cases}
\]
where $A_{\varepsilon}$ is the piecewise constant and isotropic laminate
material proposed in (\ref{eq:Aeps_def}). We state in the following result that $A_{\varepsilon}^{\zeta}$ can cloak arbitrary material
in $B_{\frac{1}{4}}$.
The proof follows from the proof of Theorem \ref{thm:main} discussed in Section \ref{sec:main_result}.

\begin{theorem} Let   $\varphi$ and $U$ be the functions defined in Theorem \ref{thm:HV:estimates}.
 Let $u^\zeta_\varepsilon$ be the solution of 
 \[
\begin{cases}
\nabla\cdot\left(A^\zeta_\varepsilon\,\nabla u^\zeta_{\varepsilon}\right)=0 \quad& \text{in }B_{1},\\
u^\zeta_{\varepsilon}=\varphi \quad & \text{on }\partial B_{1}.
\end{cases}
\]
Then, taking $\varepsilon$ to be in \eqnref{epsilon_level} and the size of the small hole in the blow-up transformation by $F$ to be $\rho_{EC}=\rho^{\frac{1}{1+N}}$, we have 
\[
\left\Vert \Lambda_{A^\zeta_\varepsilon}-\Lambda_{1}\right\Vert _{\mathcal{L}\left(H^{\frac{1}{2}}\left(\partial B_{1}\right),H^{-\frac{1}{2}}\left(\partial B_{1}\right)\right)}\le C\rho^{2}.
\]
\end{theorem}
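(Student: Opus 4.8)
The plan is to combine the two cloaking estimates already available — Theorem~\ref{thm:HV:estimates} (Heumann--Vogelius) at the level of the anisotropic pushforward $F_*\sigma_\zeta$, and Proposition~\ref{prop:homres} (the homogenization error) for the laminate approximation — via the triangle inequality, exactly as in the proof of Theorem~\ref{thm:main}. First I would fix $\rho_{EC}=\rho^{1/(1+N)}$, so that $\zeta=\rho_{EC}^{2N+2}=\rho^{2}$, and invoke Theorem~\ref{thm:HV:estimates} to get
\[
\left\Vert \Lambda_{B_1}[F_*\sigma_\zeta]-\Lambda_{B_1}[1]\right\Vert_{\mathcal{L}(H^{1/2}(\partial B_1),H^{-1/2}(\partial B_1))}\le C\rho_{EC}^{2N+2}=C\rho^{2}.
\]
It then remains to bound $\left\Vert \Lambda_{B_1}[A_\varepsilon^\zeta]-\Lambda_{B_1}[F_*\sigma_\zeta]\right\Vert$ by $C\rho^2$ as well.

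For that second bound I would mirror the energy-difference computation of Section~\ref{sec:main_result}. Since $A_\varepsilon^\zeta$ and $F_*\sigma_\zeta$ agree on $B_{1/4}$ (both equal $B$) and on $B_{1/2}\setminus B_{1/4}$ (both equal the constant $\zeta$), and since both equal $I_d$ near $\partial B_1$, the difference of the associated Dirichlet energies localizes to the annulus $B_{1-\delta}\setminus B_{1/2}$, where $A_\varepsilon^\zeta=A_\varepsilon$ and $F_*\sigma_\zeta=F_*(\sigma\circ\Psi_{1/\rho})$ — precisely the situation governed by Proposition~\ref{prop:homres}. Concretely, letting $u_\varepsilon^\zeta$ and $u_\zeta$ solve the two transmission problems on $B_1$, the boundary conditions, the radial symmetry of the coefficients, and their coincidence in $B_{1/2}$ and near $\partial B_1$ give
\[
\left\langle (\Lambda_{B_1}[A_\varepsilon^\zeta]-\Lambda_{B_1}[F_*\sigma_\zeta])\varphi,\varphi\right\rangle
=\int_{B_{1-\delta}\setminus B_{1/2}}\big(F_*(\sigma\circ\Psi_{1/\rho})-A_\varepsilon\big)\nabla u_\zeta\cdot\nabla u_\varepsilon^\zeta\,dy,
\]
and Proposition~\ref{prop:homres} (together with the regularity Lemma~\ref{lem:rad-reg}, which applies because the coefficients are radial and equal to $I_d$ near $\partial B_1$) bounds this by the $d=2$ right-hand side of that proposition with $\rho$ replaced by $\rho_{EC}=\rho^{1/(1+N)}$, i.e. by $C\varepsilon\kappa^2|\ln\rho_{EC}|^2(\tfrac1\alpha+\gamma_{\max})\|\varphi\|_{H^{1/2}}^2$. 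Feeding in $\alpha=O(\kappa|\ln\rho|^{-1})$ (the $d=2$ case of \eqnref{eq:boundalpha} with the exponent $d-2=0$) and $\gamma_{\max}=O(\kappa|\ln\rho|)$, and choosing $\varepsilon$ as in the $d=2$ line of \eqnref{epsilon_level}, makes this term $O(\rho^2)$. A polarization identity then upgrades the quadratic-form bound to the operator-norm bound, and the triangle inequality closes the argument.

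The one point requiring a little care — and the main obstacle — is the presence of the arbitrary, merely $L^\infty$ matrix $B=F_*\widetilde B$ in the core $B_{1/4}$: Lemma~\ref{lem:rad-reg} as stated assumes the coefficient is an identity near $\partial B_1$ and radial of the form $m_1\,\hat x\otimes\hat x+m_2(I-\hat x\otimes\hat x)$ on $B_1\setminus B_{1/2}$, with a vanishing (insulating) core. Here the core is not insulating but occupied by $B$. However, since $B$ sits strictly inside $B_{1/4}$ and the estimate we need is confined to $\Omega_\delta=B_{1-\delta}\setminus B_{1/2}$, one can still apply the interior/radial regularity machinery: the solutions $u_\zeta$, $u_\varepsilon^\zeta$ are $H^1$ on all of $B_1$, their traces on $\partial B_{1/2}$ are controlled by $\|\varphi\|_{H^{1/2}(\partial B_1)}$ through the global energy estimate (the coefficient is uniformly elliptic on $B_1$, with ellipticity constants depending on $\zeta=\rho^2$ only through a harmless $\rho^{-2}$ that is absorbed since we have spare powers of $\rho$ from $\rho_{EC}\gg\rho$), and on the annulus $B_{1-\delta}\setminus B_{1/2}$ the coefficient is exactly of the radial form covered by Lemma~\ref{lem:rad-reg} with the Neumann/transmission data on $\partial B_{1/2}$ playing the role previously played by the insulating condition. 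Thus the $L^\infty$ and $W^{1,\infty}$ bounds on $u_\zeta$, $u_\varepsilon^\zeta$ and their radial/tangential derivatives over $\Omega_\delta$ go through unchanged, and the rest of the argument is verbatim the proof of Theorem~\ref{thm:main}.
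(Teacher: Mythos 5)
Your proposal is correct and takes essentially the same route as the paper, which proves this theorem simply by invoking the argument of Theorem \ref{thm:main}: the triangle inequality combining Theorem \ref{thm:HV:estimates} (with $\rho_{EC}=\rho^{1/(1+N)}$, hence $\zeta=\rho_{EC}^{2N+2}=\rho^{2}$) with the $d=2$ homogenization estimate of Proposition \ref{prop:homres} for the choice of $\varepsilon$ in \eqnref{epsilon_level}, the coefficient difference localizing to $B_{3/4}\setminus B_{1/2}$ since $A_{\varepsilon}^{\zeta}$ and $F_{*}\sigma_{\zeta}$ coincide on $B_{1/2}$. One caveat: your parenthetical claim that a $\zeta^{-1}=\rho^{-2}$ ellipticity loss could be ``absorbed by spare powers of $\rho$'' is not actually available (both terms in the triangle inequality are exactly $O(\rho^{2})$), but no such loss occurs, since the energies of $u_{\zeta}$ and $u^{\zeta}_{\varepsilon}$ on $B_{1}\setminus B_{1/2}$ are bounded uniformly in $\zeta$ (the low-conductivity layer only decreases the flux through $\partial B_{1/2}$), so your adaptation of Lemma \ref{lem:rad-reg} to the transmission condition at $\partial B_{1/2}$ goes through.
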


This means that at the price of adding a low
conducting material $\zeta=\rho_{EC}^{2N+2}=\rho^{2}$ in $B_{\frac{1}{2}}\setminus B_{\frac{1}{4}}$,
$A_{\varepsilon}^{\zeta}$ is an approximate cloak of order $\rho^2$. 

\section{Illustrative examples}\label{sec:Illustrative-examples}

In this section, we present numerical examples illustrating the near-cloak laminates constructed in Section \ref{subsec:homogenization} for a perfectly insulating inclusion in dimension $d=2$. 

\subsection{Computational setup}
We construct the near-cloaking laminates through the following steps.
Recall that we use the coordinate transformation $y=F(x)$ given by \eqnref{eq:diffeo_radial}, which maps the virtual domain $B_1$ (using coordinates $x$) onto the physical domain $B_1$ (using coordinates $y$), where the transformation is designed to expand the insulating cores is $B_\rho$ in the virtual domain and to $B_{\frac{1}{2}}$ in the physical domain. 

\begin{enumerate}[label=$\bullet$\, {Step \arabic*.}, leftmargin=3.5em, align=left]
  \item Numerical computation of a GPT-vanishing structure $\sigma$ with an insulating core.
  \item (Theoretical) Define the small coated structure $(\sigma\circ\Psi_{1/\rho})(x)$.
  \item (Theoretical) Define the anisotropic structure $F_*\left(\sigma\circ\Psi_{1/\rho}\right)(y)$.
  \item Numerical computation of a near-cloaking laminate in the physical domain that approximates $F_*\left(\sigma\circ\Psi_{1/\rho}\right)(y)$.
\end{enumerate}

\smallskip

In {Step 1}, given a prescribed number of coating layers $L$, we take a uniform partition $(r_1,\dots,r_{L+1})$ of the interval $\left[1,2\right]\subset\RR$, defined by $$r_{j}=1+\tfrac{j-1}{L}, \quad j=1,\dots,L+1.$$ 
For a target order $N\in\NN$, we numerically compute the conductivity profile $\sigma=\left(\sigma_{1},\dots,\sigma_{L}\right)$, more precisely, a function of the form \eqnref{eq:coating_sigma:reference}, such that the corresponding coated disk in $\RR^2$ has vanishing CGPTs up to order $N$. In other words, $\sigma$ is a solution of the non-linear system
$$
M_{k}\left[\sigma\right]=0\quad\text{for }k=1,\dots,N, 
$$
where $M_{k}$ denotes the $k$-th CGPT defined in \eqnref{eq:expansion_d=2_simple}.

Following \cite{Ammari:2013:ENC_1}, we employ a gradient descent scheme to find a joint root of the CGPT values $M_k[\sigma]$ for $k\leq N$, iteratively updating $\sigma$ {color{magenta}starting from  an initialization $\sigma_j=2^{(-1)^j}$, $j=1,\dots,L$.}

\smallskip

In {Step 2} and {Step 3}, we fix a parameter $\rho>0$ and define $F_*\left(\sigma\circ\Psi_{1/\rho}\right)(y)$ as in Section~\ref{section:pushforward}, where $\sigma$ is the GPT-vanishing structure computed in {Step 1}.

\smallskip

In Step 4, we fix the parameter $\varepsilon$ and compute the eigenvalues $\sigma_1^*(s_k)$ and $\sigma_2^*(s_k)$ for each $k=0,\dots, N_\varepsilon-1$, where the number $N_\varepsilon$ and the sampling points $s_k$ are defined depending on $\varepsilon$ as outlined in Section~\ref{subsec:homogenization}. 
We then use these eigenvalues to select the constituent isotropic conductivities, $\alpha$ (low conducting material) and $\gamma$ (one or more high conducting materials), according to the homogenization scheme detailed in Section~\ref{subsec:homogenization}. 

\subsection{Examples}
In all examples, we set the GPT-vanishing order $N=4$ or $6$, take the number of coating layers $L$ to be equal to $N$, and fix the scale of lamination at $\varepsilon=\tfrac{1}{50}$ and the near-cloak parameter $\rho=0.0001$.

\subsubsection{Non-coated insulating core}\label{subsub:non_coated}
To establish a baseline for comparison, we first consider a non-coated structure characterized by 
$$\sigma(x)=\chi_{\RR^2\setminus \overline{B_1}}\,(x),\quad (\sigma\circ\Psi_{1/\rho})\big|_{B_1}(x)=\chi_{\{\rho \le |x|\le 1\}}\,(x).$$

Then the radial and tangential directional eigenvalues of the cloaking structure $F_*(\sigma\circ\Psi_{1/\rho}))(y)$ in the physical variable $y$, with $s=|y|\in[\tfrac{1}{2},1]$, are respectively
\beq\label{sigma_star:noncoating}
\begin{aligned}
\sigma_1^*(y)&=\frac{\sigma(\rho^{-1}F^{-1}(y))}{\lambda(s)}=\frac{1}{\lambda(s)},\\
\sigma_2^*(y)&=\sigma(\rho^{-1}F^{-1}(y))\lambda(s)={\lambda(s)},
\end{aligned}
\eeq
where $\lambda$ is given by \eqnref{eq:est_lambda}. Note that $1/\lambda$ and $\lambda$ and are the radial and tangential directional eigenvalues of $F_* 1$, the push-forward of the constant conductivity, respectively.
The eigenvalues of $\lambda$  and ${1}/{\lambda}$ are presented in Figure \ref{fig:eigen_noncoated}. 

In view of condition \eqnref{eq:parameter_conditions1}, we can choose $\alpha>0$ satisfying $-1<\alpha<1/\max(\lambda)$, that is, $0<\alpha <0.0454$. With the choice $\alpha=0.0227$, \eqnref{eq:parameter_conditions12} reduces to $\gamma > 43.9665$. Figure \ref{fig:lam_noncoated} shows a cloaking laminate without coating, given by \eqnref{eq:Aeps_def}, constructed from three isotropic materials with conductivities $\alpha(=0.0227)$, $1$, and $\gamma(=65.9498)$, and lamination scale $\varepsilon=\tfrac{1}{50}$.

\begin{figure}[h!]
	\centering
	\begin{subfigure}{1\textwidth}
		\centering
		\begin{minipage}{0.48\linewidth}
			\centering
			\includegraphics[width=\linewidth]{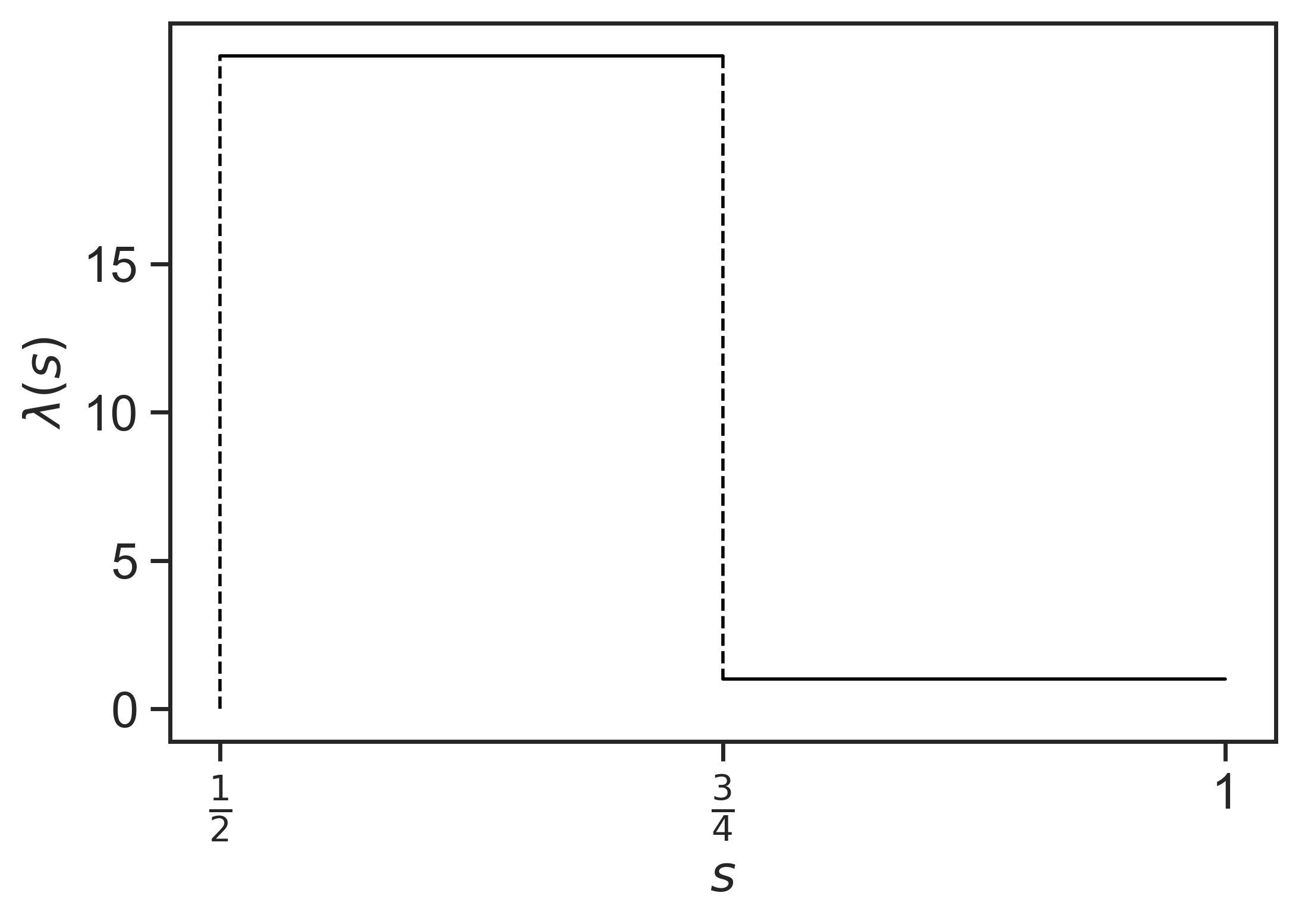}
		\end{minipage}\hfill
		\begin{minipage}{0.48\linewidth}
			\centering
			\includegraphics[width=\linewidth]{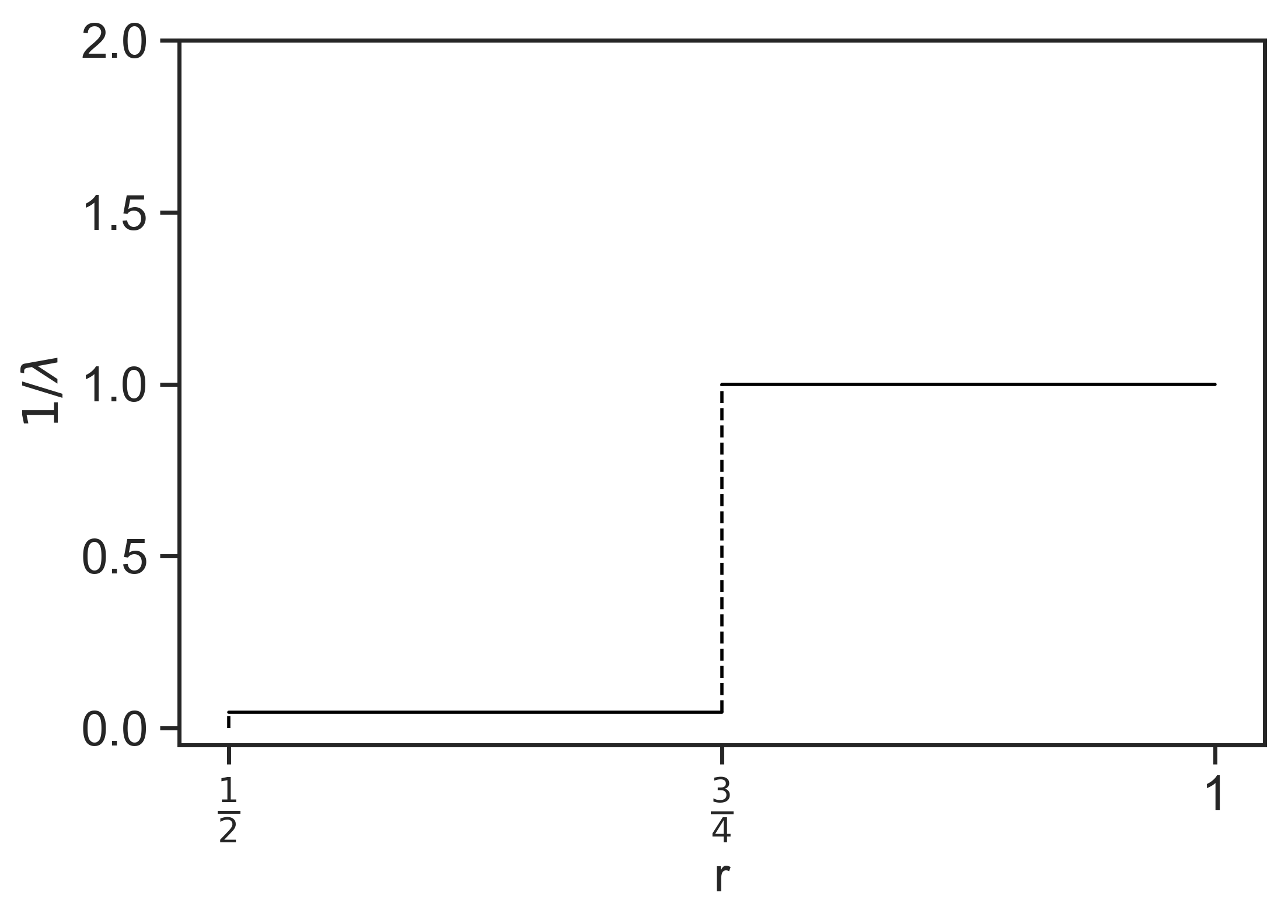}
		\end{minipage}
		\caption{The tangential (left) and radial (right) eigenvalues $\lambda(y)$ and $1/\lambda(y)$ of $(F_{*}1)(y)$ with the physical variable $y$. For $s=|y|\in[\frac{1}{2},1]$, $\sigma_1^*(y)=1/\lambda(y)$ and $\sigma_2^*(y)=\lambda(y)$  (see \eqnref{sigma_star:noncoating}).}
		\label{fig:eigen_noncoated}
	\end{subfigure}
	\vspace{0.5cm}
	\begin{subfigure}{1\textwidth}
		\centering
		\includegraphics[width=0.6\linewidth]{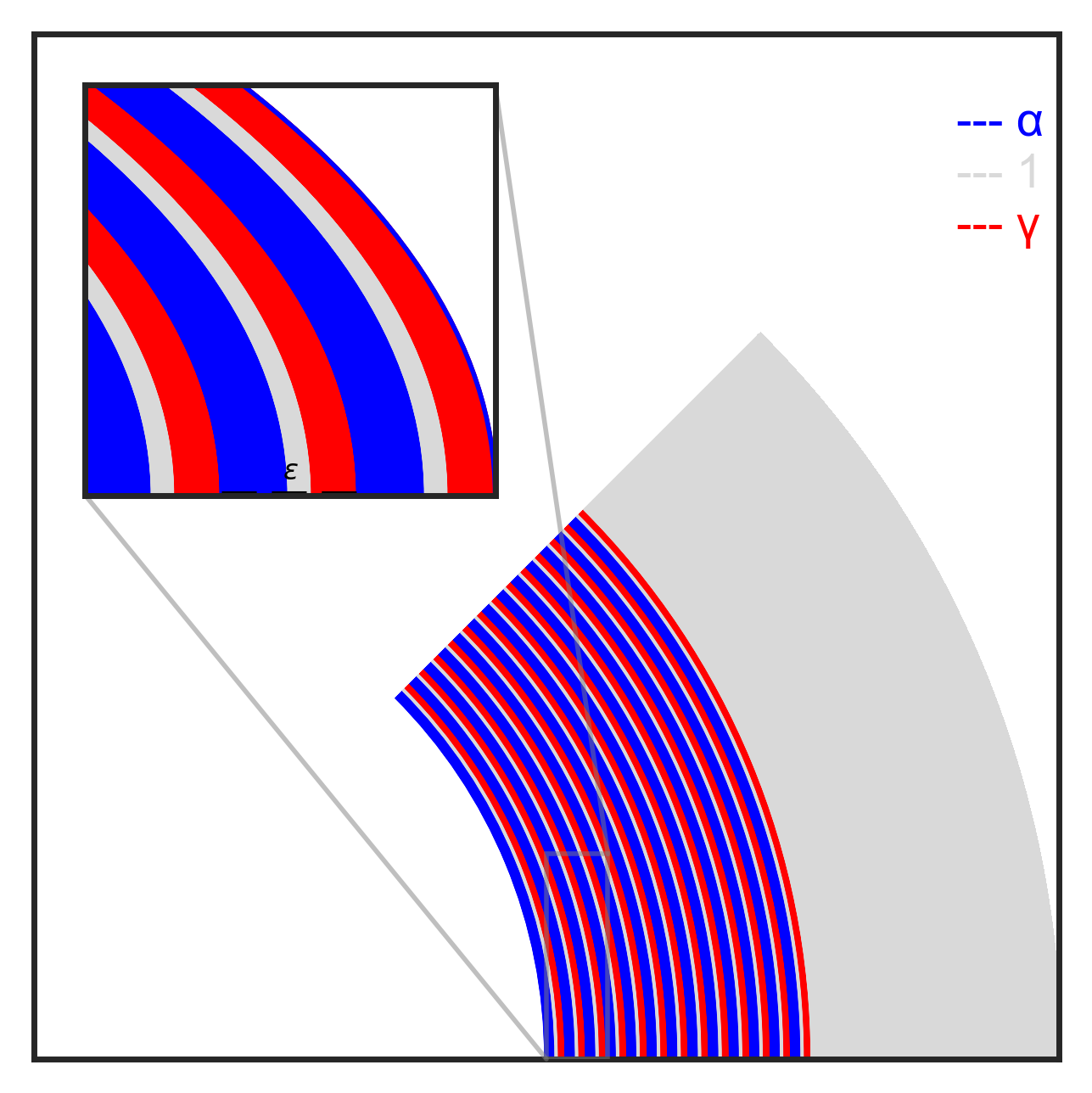}    
		\caption{A laminate in the annulus $\{\frac{1}{2}\leq |y|\leq \frac{3}{4}\}$ with a uniform lamination scale $\varepsilon=\tfrac{1}{50}$ and three isotropic materials with conductivities $\alpha(=0.0227)$, $1$, and $\gamma(=65.9498)$.}
		\label{fig:lam_noncoated}
	\end{subfigure}
	\caption{Cloaking laminate using a non-coated insulated core in $\RR^2$.}
\end{figure}

\subsubsection{Cloaking laminate using GPT-vanishing structure of order $N=4$}

Figure \ref{fig:L=4} shows an illustration for $N=L=4$ number of coating layers in two dimensions. 
In Figure \ref{fig:sigma_L=4}, we observe a fluctuating behaviour of the conductivities of the multi-coat with a maximum value of $7.6021$ (leftmost layer) and minimum of $0.2811$ (second layer from the left). 
We use the small parameter $\rho_{EC}$ computed by \eqnref{rho_enh} in the blow-up transformation. Specifically, $\rho=0.0001$ and $L=4$, we have $\rho_{EC}=0.1585$.

Figure \ref{fig:eigen_L=4} shows the tangential and radial eigenvalues of $F_{*}(\sigma\circ\Psi_{1/\rho})$.
Recall that \eqnref{eq:alpha_cond} restricts the permissible values of $\alpha$. In this particular example, this corresponds to the interval $0<\alpha<0.0734$.
Consequently, we achieve a slight relaxation of the constraints on the low-conducting constituent material compared to the non-coated case in Section \ref{subsub:non_coated}.
We take  $\alpha=0.05$. Given this choice of $\alpha$, we determine the permissible range for $\gamma$. In the leftmost layer where $\sigma$ is maximal, we have $\sigma_1^*>1$; consequently, condition \eqnref{eq:parameter_conditions2} implies that $29.8458<\gamma<56.7726$ for this layer. In all other regions where $\sigma_1^*<1$, \eqnref{eq:parameter_conditions12} requires $\gamma>7.7902$. To satisfy these constraints, we select $\gamma=43.3092$. The resulting laminate structure, composed of conductivity phases $\alpha$, $1$, and $\gamma$ with a uniform scale $\varepsilon=\tfrac{1}{50}$, is illustrated in Figure \ref{fig:lam_enh_L=4}.
\begin{figure}
    \centering
    \begin{subfigure}{1\textwidth}
        \centering
        \includegraphics[width=0.5\linewidth]{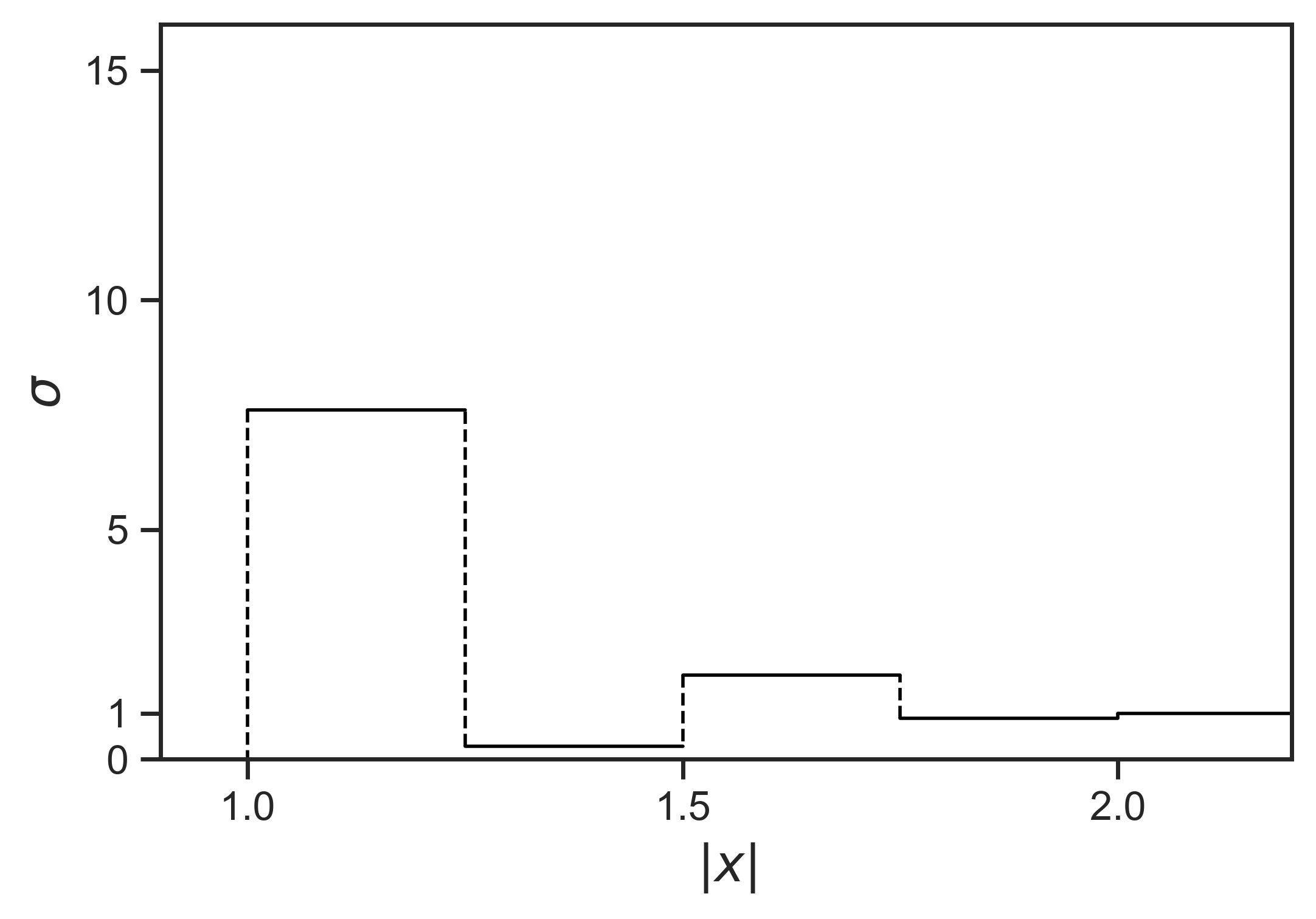} 
        \caption{Conductivity profile with $L=4$ for an insulated inclusion}
        \label{fig:sigma_L=4}
    \end{subfigure}
    
     \vspace{0.5cm}
     
  \begin{subfigure}{1\textwidth}
    \centering
   \begin{minipage}{0.48\linewidth}
        \centering
        \includegraphics[width=\linewidth]{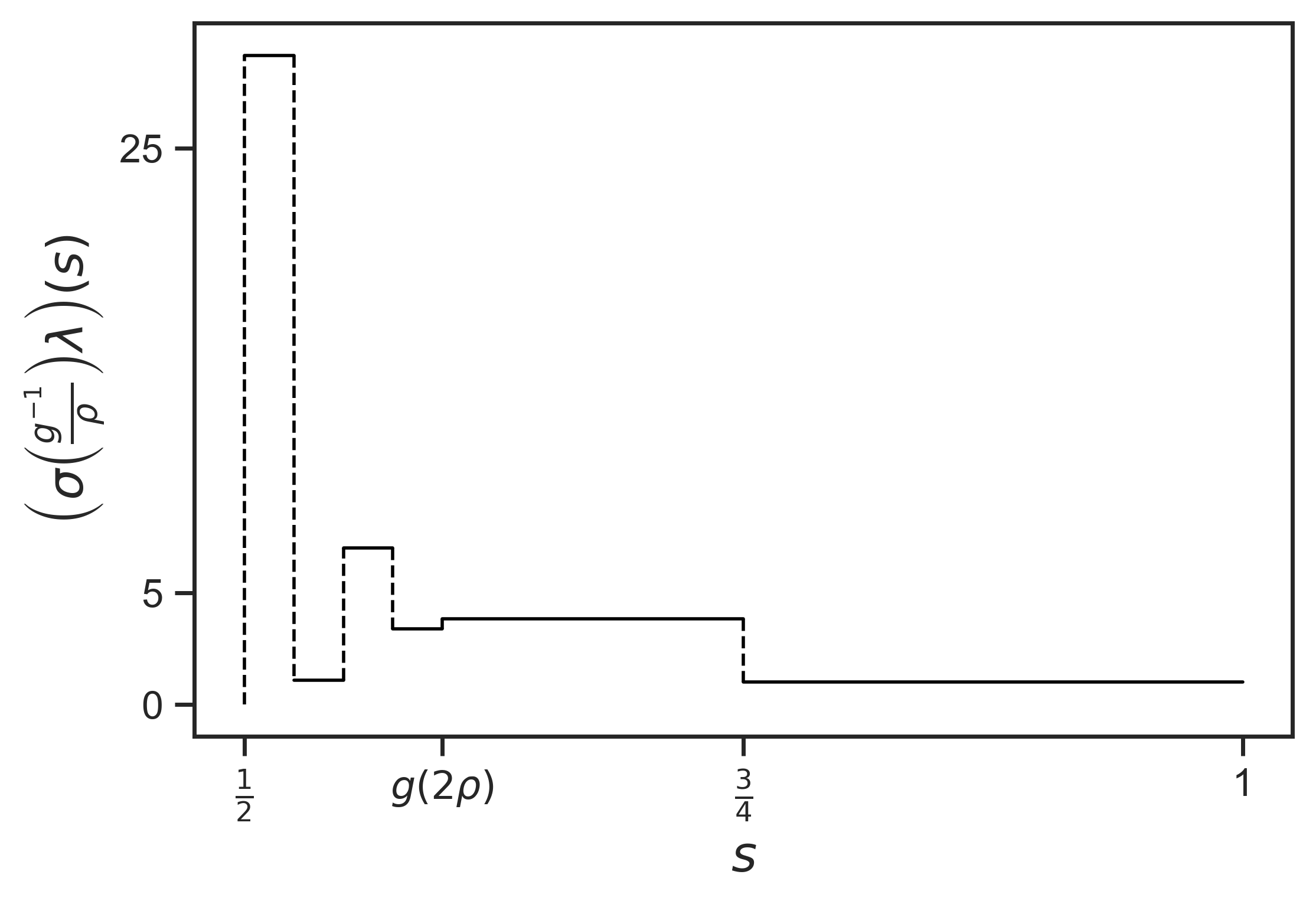}
    \end{minipage}\hfill
    \begin{minipage}{0.48\linewidth}
        \centering
        \includegraphics[width=\linewidth]{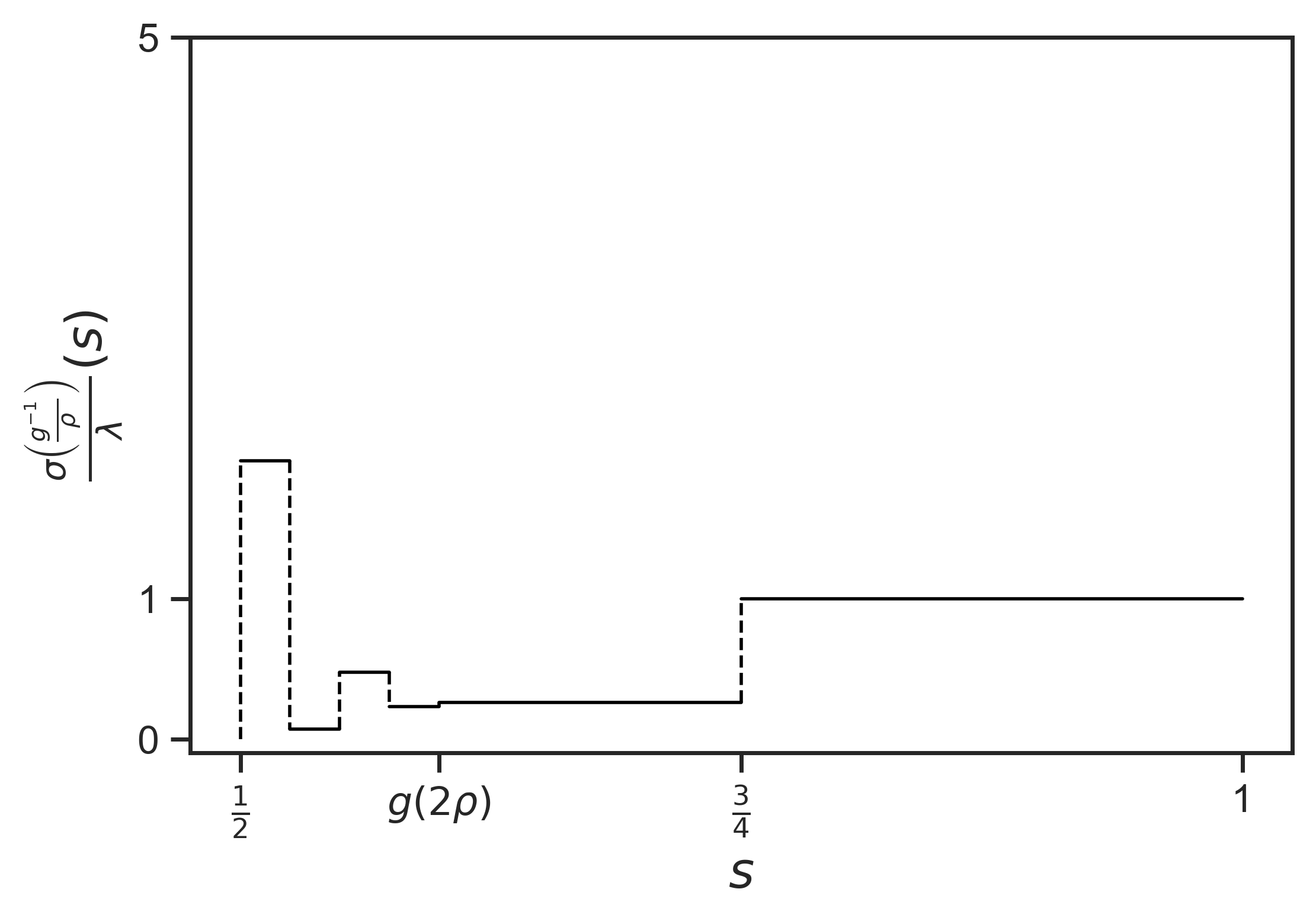}
    \end{minipage}
    \caption{The tangential (left) and radial (right) eigenvalues of $F_{*}(\sigma\circ\Psi_{1/\rho})$}
    \label{fig:eigen_L=4}
\end{subfigure}
     \vspace{0.5cm}  
    \begin{subfigure}{1\textwidth}
        \centering
        \includegraphics[width=0.6\linewidth]{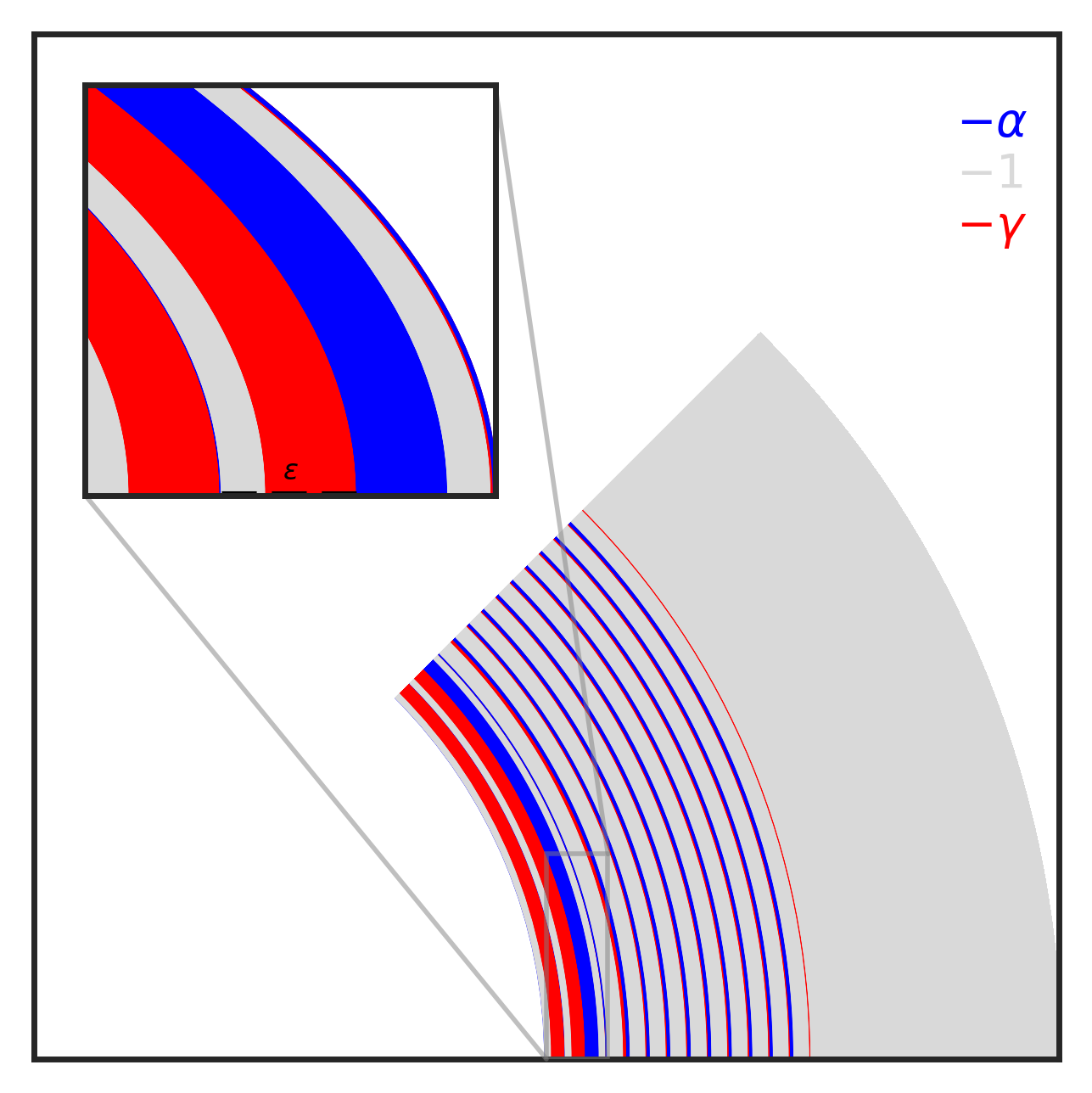}    
        \caption{A laminate with with a uniform scale and three materials}
        \label{fig:lam_enh_L=4}
    \end{subfigure}
    \caption{Cloaking laminate based on a GPT-vanishing structure of order $N=4$ in $\RR^2$.}
\label{fig:L=4}
\end{figure}

\subsubsection{Near-Cloaking laminate using GPT-vanishing structure of order $N=6$}
 For the case of $L=6$ layers in two dimensions, Step 1 of the computational setup yields a GPT-vanishing structure with a maximum conductivity of $11.6827$ and a minimum of $0.1706$. Following \eqnref{rho_enh}, we use $\rho_{EC}=0.2683$. By \eqnref{eq:alpha_cond}, one finds the permissible range for $\alpha$ as $0.0409<\alpha<0.0673$. We select $\alpha=0.05$. As shown in Figure \ref{fig:eigen_L=6}, the first and third coating layers (from the left) exhibit $\sigma_1^*>1$. Condition \eqnref{eq:parameter_conditions2} implies that $29.8968<\gamma<36.5563$ on the leftmost layer, and $9.4240<\gamma<27.4342$ for the third layer. For all other layers where $\sigma_1^*<1$, \eqnref{eq:parameter_conditions12} requires $\gamma>5.5576$. To satisfy these constraints, we employ two highly conducting isotropic materials; we choose $\gamma^{(1)}=32$ for the leftmost layer and $\gamma^{(2)}=15$ for the remaining regions. The resulting laminate structure is illustrated in Figure \ref{fig:lam_enh_L=6}. 

\subsubsection{Near-cloaking laminate in dimensions $d=3$ with $N=3$}
For the three-dimensional case ($d=3$) with $\rho=0.0001$, the non-coated cloaking laminate requires a low conducting isotropic material with $\alpha< 9.088 \times 10^{-6}$. Employing the multi-coating technique relaxes this constraint, for instance, allowing $0.0054<\alpha<0.0097$ for the GPT-vanishing structure of order $N=3$. The conductivity profile $\sigma$ with $L=3$ layers is shown in Figure \ref{fig:sigma_d=3}. The eigenvalues $\sigma_1^*$ and $\sigma_2^*$ of $F_*(\sigma\circ\Psi_{1/\rho})$ are presented in Figure \ref{fig:eigen_d=3}. Since $\sigma_1^*<1$ in the cloaking structure, we only need three constituent isotropic materials with conductivities $\alpha$ (low), $1$, and $\gamma$ (high), given by conditions   \eqnref{eq:parameter_conditions1} and \eqnref{eq:parameter_conditions12}. 

The laminate in Figure \ref{fig:d=3} is composed of layers of $\alpha=0.0075$, $1$, and $\gamma=10.8401$.

\begin{figure}
    \centering
    \begin{subfigure}{\textwidth}
        \centering
        \includegraphics[width=0.5\linewidth]{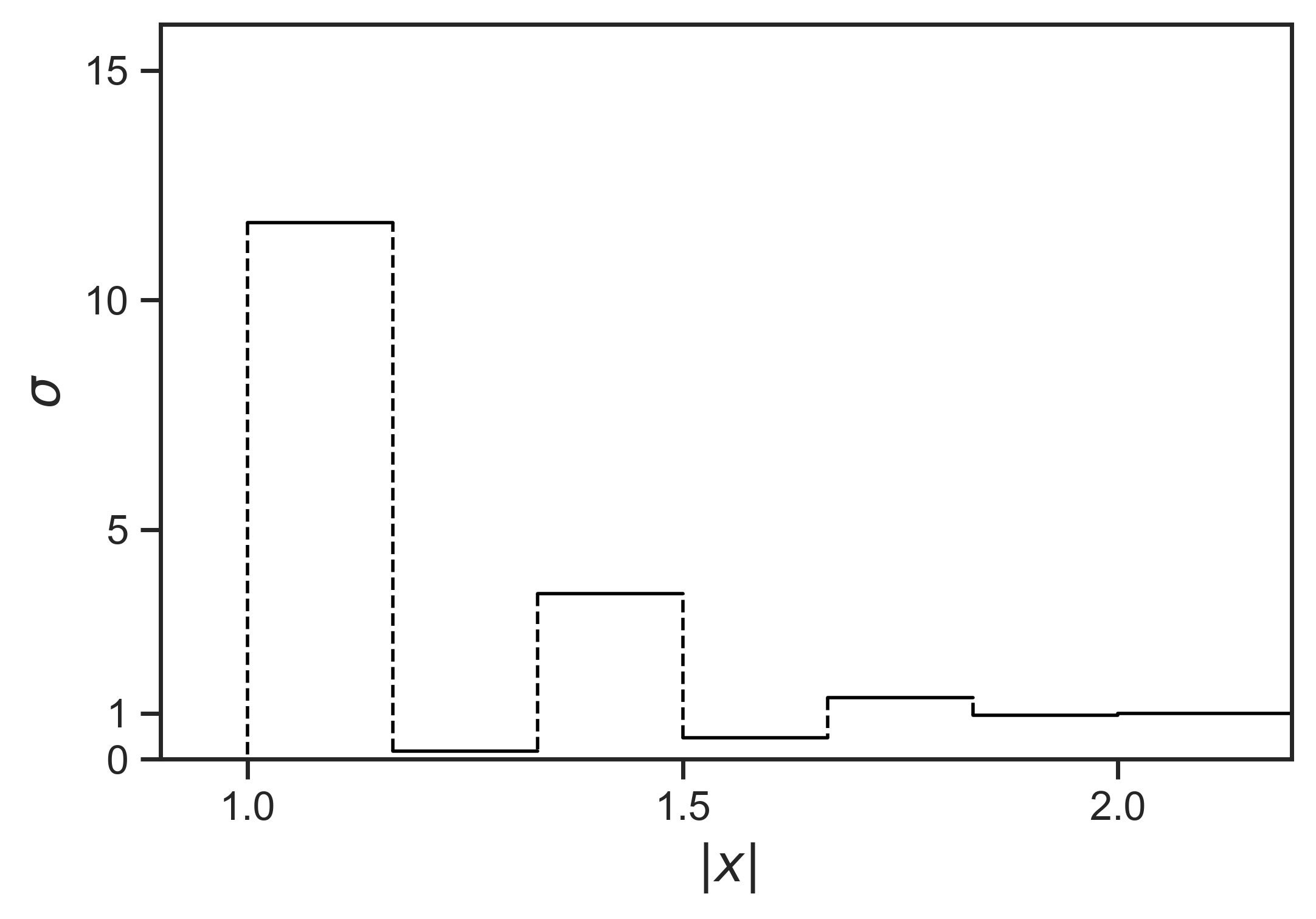} 
        \caption{Conductivity profile with $L=6$ for an insulated inclusion}
        \label{fig:sigma_L=6}
    \end{subfigure}
    
     \vspace{0.5cm}
     
   \begin{subfigure}{1\textwidth}
    \centering
   \begin{minipage}{0.48\linewidth}
        \centering
        \includegraphics[width=\linewidth]{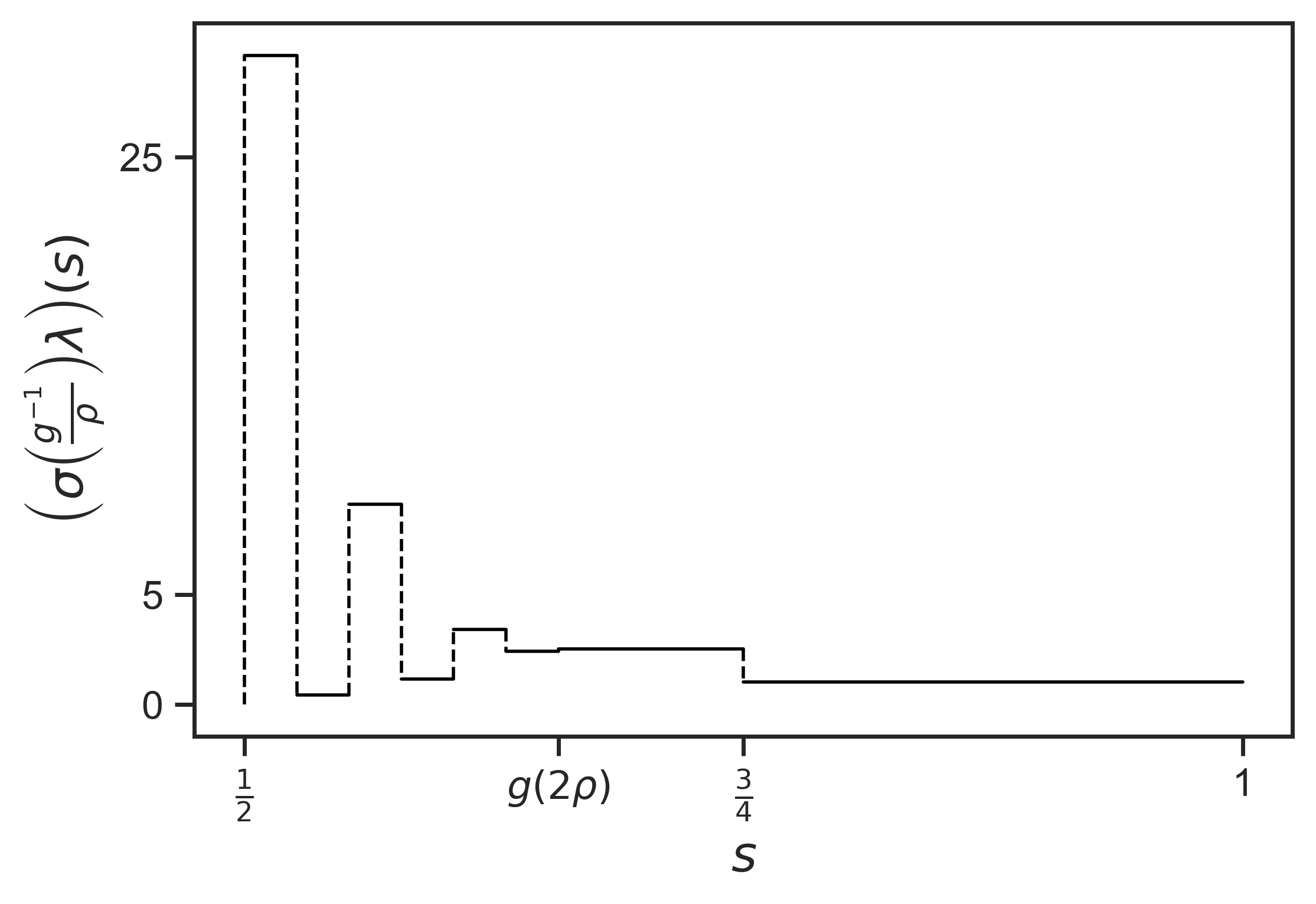}
    \end{minipage}\hfill
    \begin{minipage}{0.48\linewidth}
        \centering
        \includegraphics[width=\linewidth]{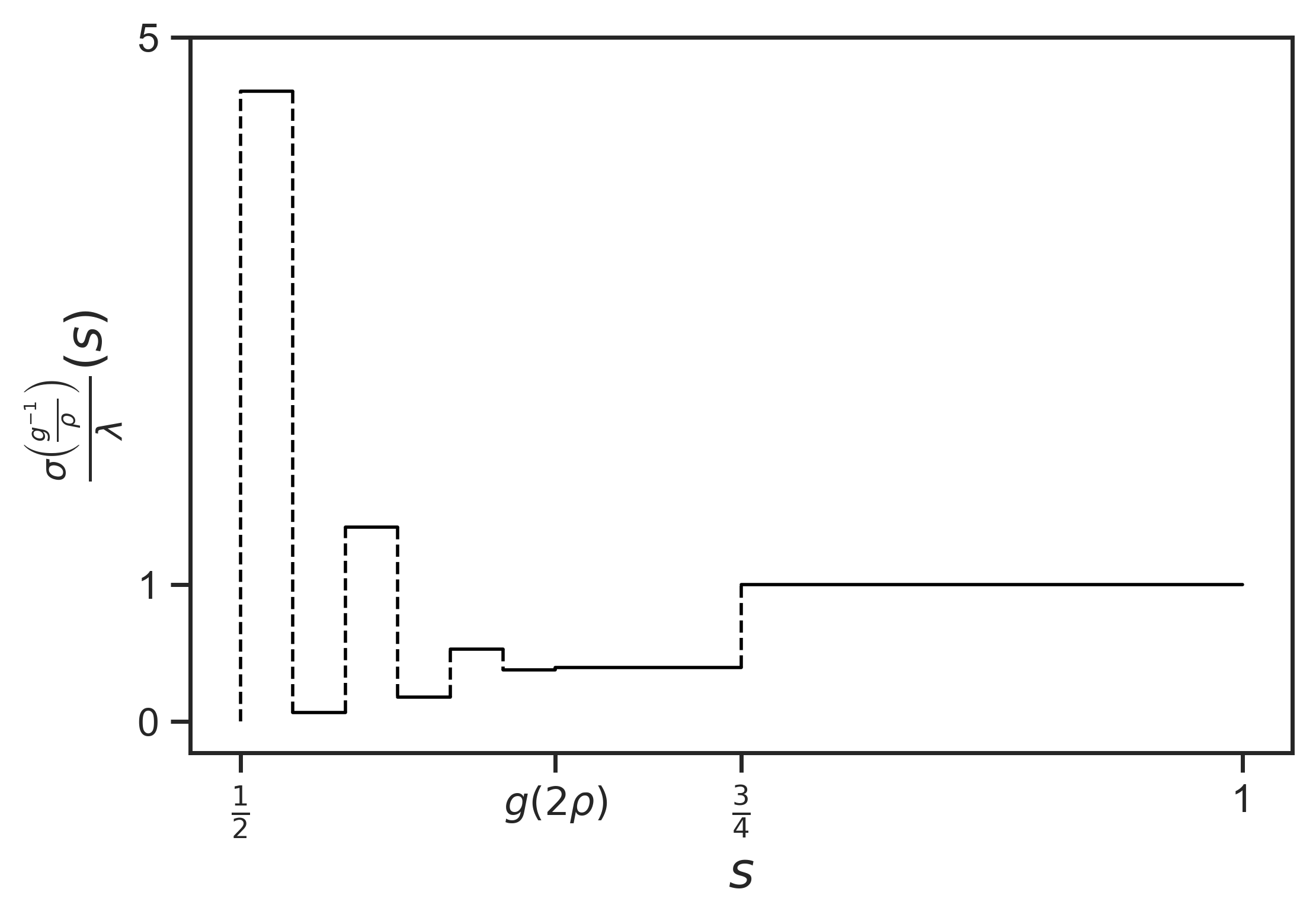}
    \end{minipage}
    \caption{The tangential (left) and radial (right) eigenvalues of $F_{*}(\sigma\circ\Psi_{1/\rho})$}
    \label{fig:eigen_L=6}
\end{subfigure}
     \vspace{0.5cm}
    \begin{subfigure}{\textwidth}
        \centering
        \includegraphics[width=0.6\linewidth]{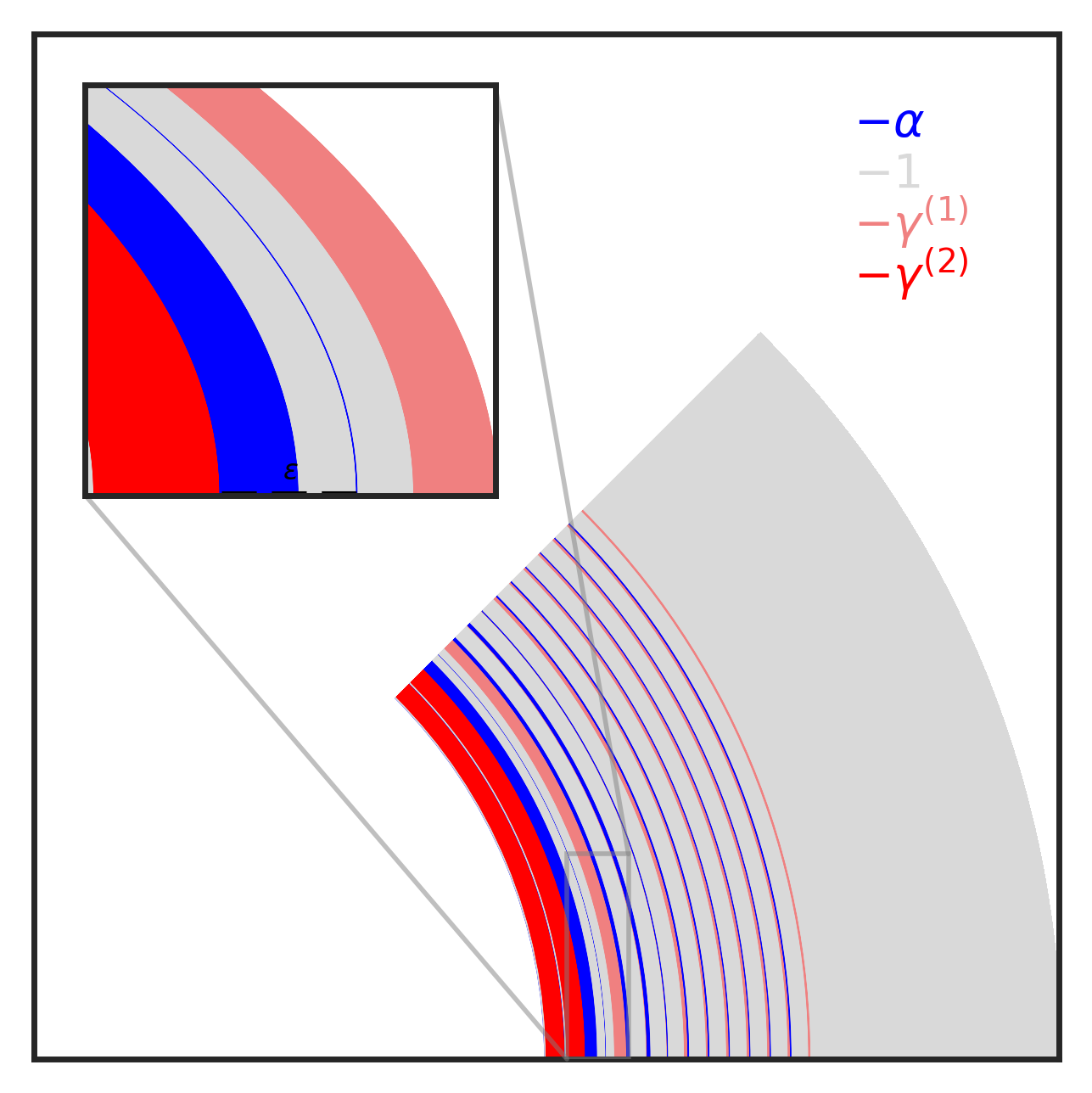}    
        \caption{A laminate with with a uniform scale and four materials}
        \label{fig:lam_enh_L=6}
    \end{subfigure}
    \caption{Cloaking laminate based on a GPT-vanishing structure of order $N=6$ in $\RR^2$.}
    \label{fig:L=6}
\end{figure}

\begin{figure}
    \centering
    \begin{subfigure}{\textwidth}
        \centering
        \includegraphics[width=0.5\linewidth]{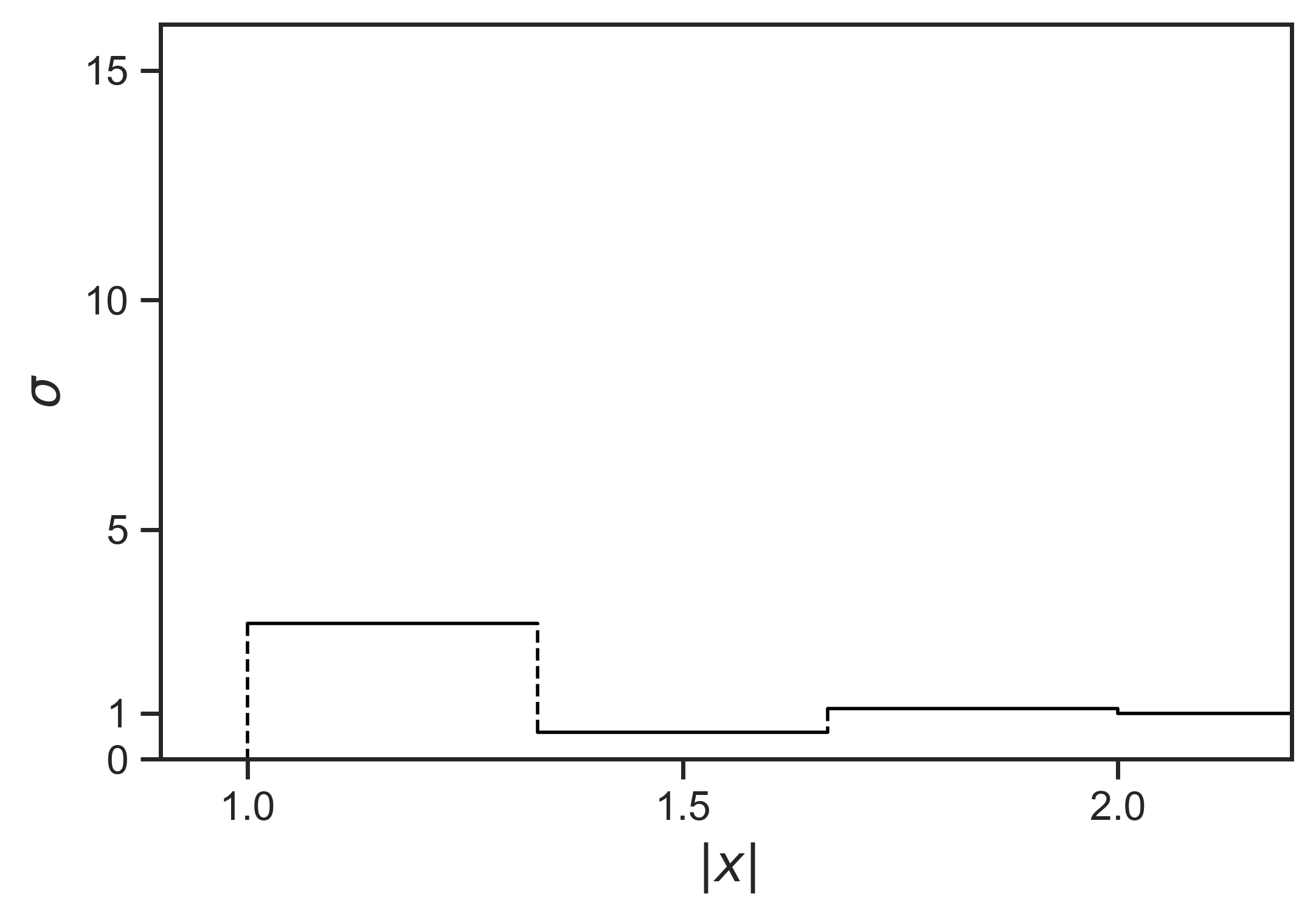} 
        \caption{Conductivity profile with $L=3$ for an insulated inclusion}
        \label{fig:sigma_d=3}
    \end{subfigure}
    
     \vspace{0.5cm}
     
   \begin{subfigure}{1\textwidth}
    \centering
   \begin{minipage}{0.48\linewidth}
        \centering
        \includegraphics[width=\linewidth]{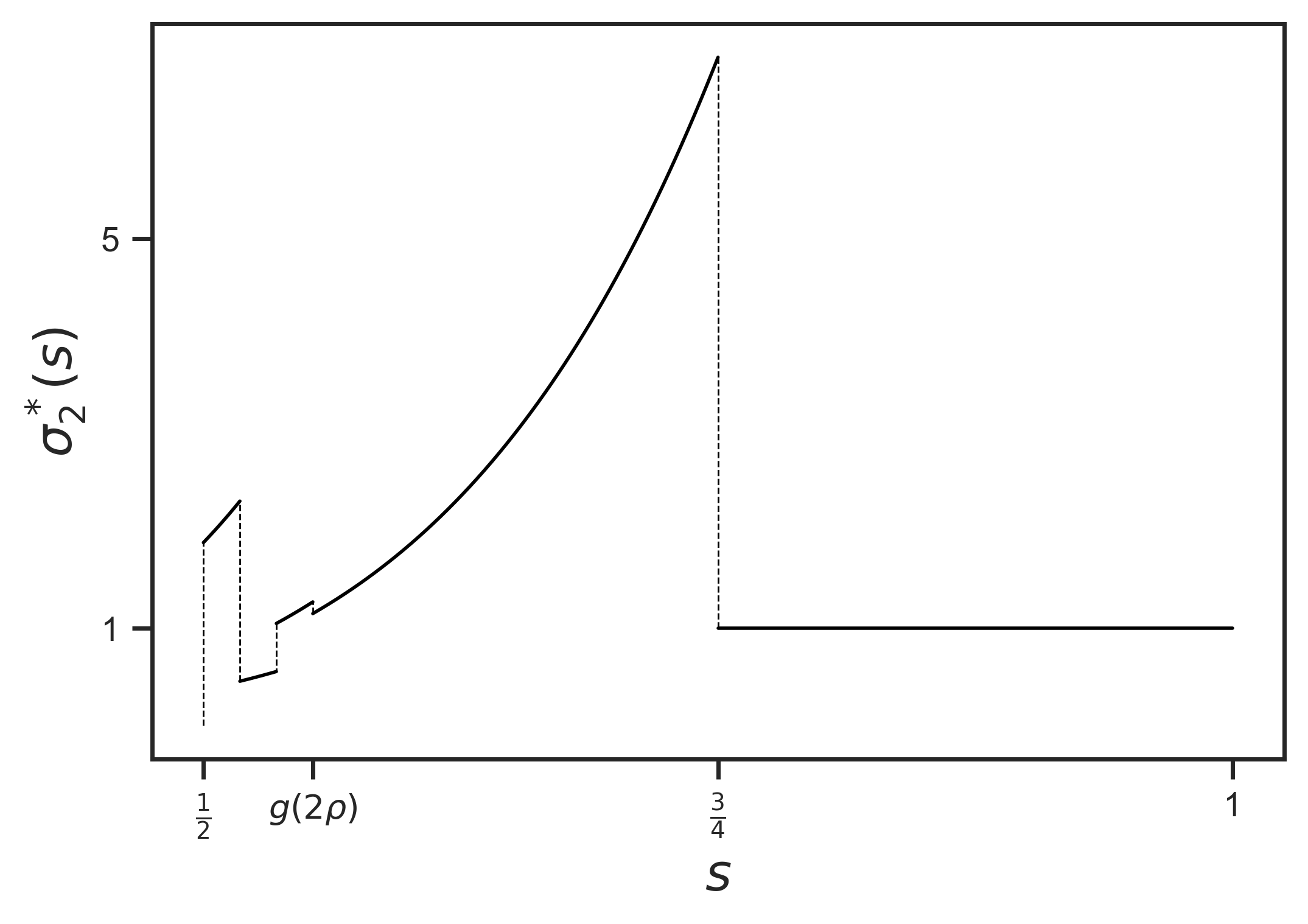}
    \end{minipage}\hfill
    \begin{minipage}{0.48\linewidth}
        \centering
        \includegraphics[width=\linewidth]{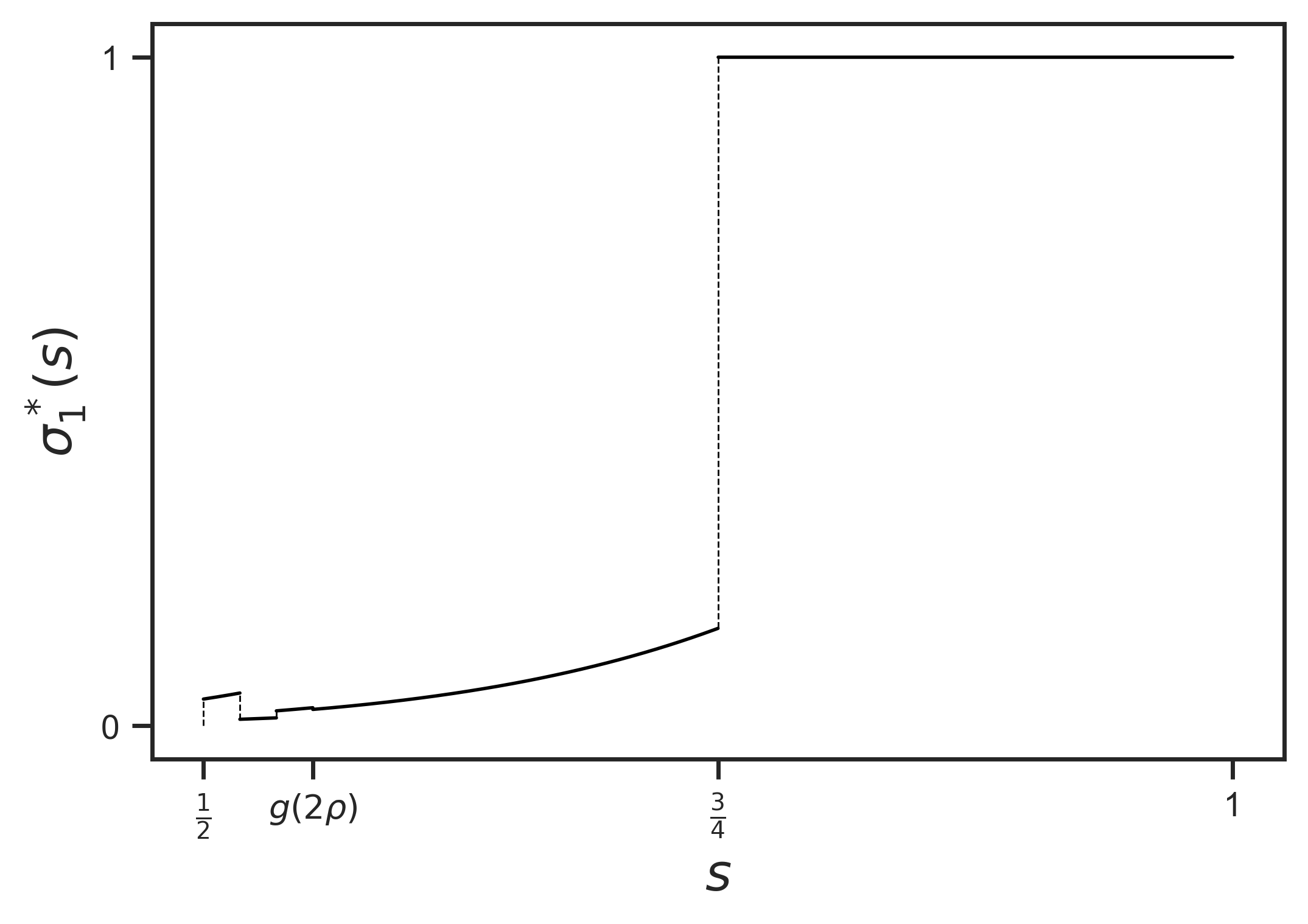}
    \end{minipage}
    \caption{The tangential (left) and radial (right) eigenvalues of $F_{*}(\sigma\circ\Psi_{1/\rho})$}
    \label{fig:eigen_d=3}
\end{subfigure}
     \vspace{0.5cm}
    \begin{subfigure}{\textwidth}
        \centering
        \includegraphics[width=0.6\linewidth]{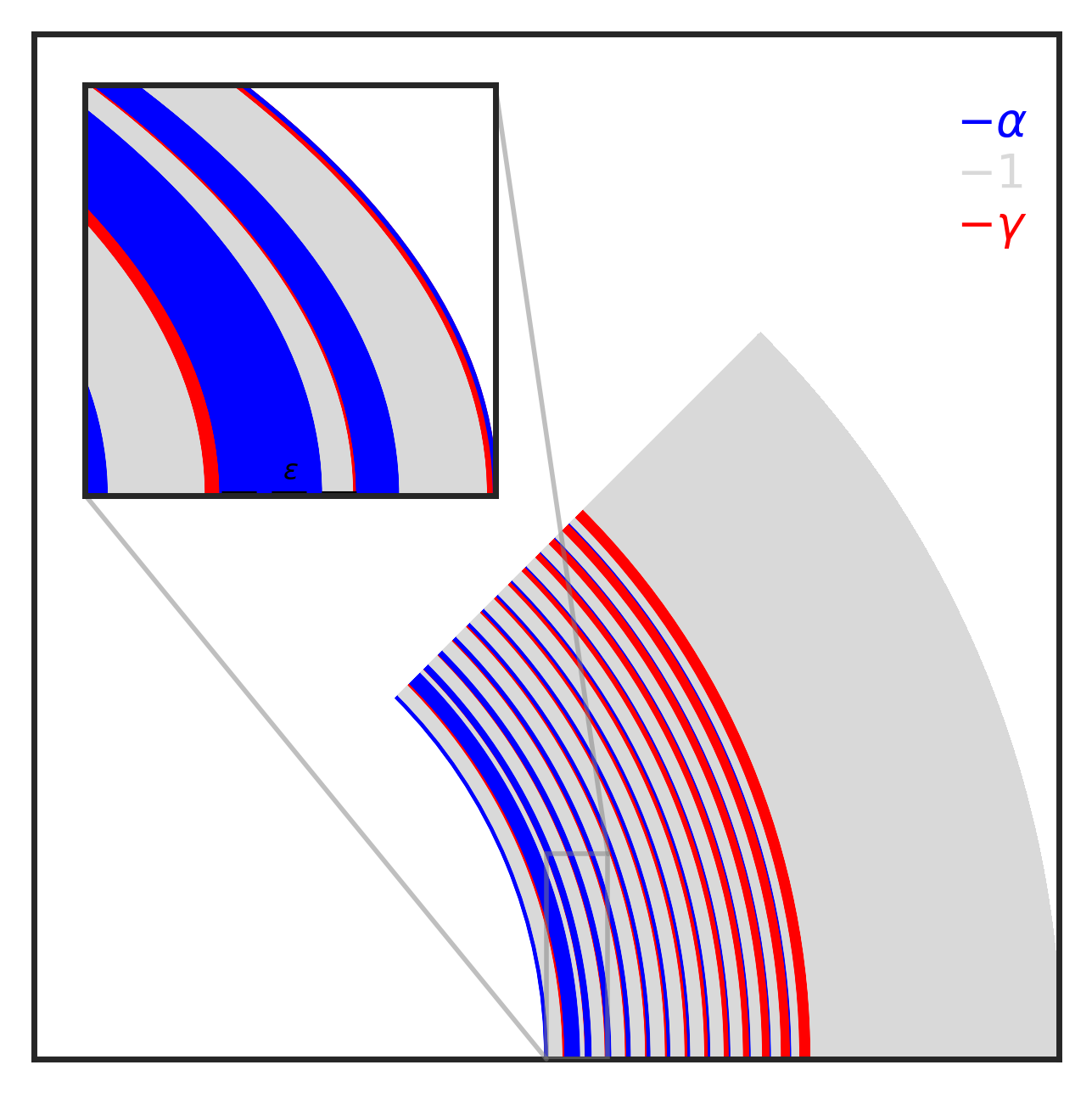}    
        \caption{A laminate with with a uniform scale and three materials}
        \label{fig:lam_enh_d=}
    \end{subfigure}
    \caption{Cloaking laminate based on a GPT-vanishing structure of order $N=3$ in $\RR^3$.}
    \label{fig:d=3}
\end{figure}

 \section{Conclusion}\label{sec:conclusion}
The paper shows that that  radial laminates constructed via homogenization can be further improved by  employing a multi-coating structure that cancels GPTs of lower order. The improvement lies either in reducing the contrast required of the constituent materials or allowing a coarser microstructure. In two dimensions,  the multi-coating strategy offers at most linear gain in reducing the number of required layers compared to the non-coated regime. While the near-cloak based on a GPT-vanishing coating scheme was constructed for a perfectly insulated inclusion, arbitrary inclusion can be cloaked at the cost of introducing a thin layer of low-conducting material.
\bibliographystyle{amsalpha}
\bibliography{GemidaLim_bib2}

\end{document}